\numberwithin{equation}{section}
\newtheorem{thm}{Theorem}[section]
\newtheorem{lma}[thm]{Lemma}
\newtheorem{defn}[thm]{Definition}
\newtheorem{prop}[thm]{Proposition}
\newtheorem{rem}[thm]{Remark}
\renewcommand{\geq}{\geqslant}
\renewcommand{\leq}{\leqslant}
\renewcommand{\H}{\text{H}}
\renewcommand{\P}{\text{P}}
\title{Dimension and measure for typical random fractals}
\author{Jonathan M. Fraser\\ \\
\emph{Mathematical Institute, University of St Andrews, North Haugh,}\\ \emph{St Andrews, Fife, KY16 9SS, Scotland}\\ \emph{e-mail: jmf32@st-andrews.ac.uk}}
\begin{document}
\maketitle

\begin{abstract}
A random iterated function system (RIFS) is a finite set of (deterministic) iterated function systems (IFSs) acting on the same metric space and, for a given RIFS, we define a continuum of random attractors corresponding to each sequence of deterministic IFSs.  Much work has been done on computing the `almost sure' dimensions of these random attractors.  We compute the typical dimensions (in the sense of Baire) and observe that our results are in stark contrast to those obtained using the probabilistic approach.  Furthermore, we examine the typical Hausdorff and packing measures of the random attractors and give examples to illustrate some of the strange phenomena that can occur.  The only restriction we impose on the maps is that they are bi-Lipschitz and we obtain our dimension results without assuming any separation conditions.
\\ \\
\emph{Mathematics Subject Classification} 2010:  primary: 28A80, 28A78, 54E52; secondary: 37C45.
\\ \\
\emph{Key words and phrases}: Hausdorff dimension, packing dimension, box dimension, Baire category, random iterated function system.
\end{abstract}

\section{Introduction}

In this paper we consider the dimension and measure of typical attractors of random iterated function systems (RIFSs).  We define a RIFS to be a finite set of iterated function systems (IFSs) acting on the same metric space and, for a given RIFS, we define a continuum of random attractors corresponding to each sequence of deterministic IFSs.  In fact, these attractors are 1-variable random fractals, as discussed in \cite{superfractals, vvariable2, vvariable}.  Much work has been done on computing the `almost sure' dimensions of these random attractors, where `almost sure' refers to a probability measure on the sample space induced from a probability vector associated with the finite list of IFSs.  One expects the dimension to be some sort of `weighted average' of the dimensions corresponding to the attractors of the deterministic IFSs.  Here, we consider a \emph{topological} approach, based on Baire category, to computing the generic dimensions and obtain results in stark contrast to those obtained using the probabilistic approach.  We are able to obtain very general results, only requiring that our maps are bi-Lipschitz and assuming no separation conditions.  We compute the typical Hausdorff, packing and box dimensions of the random attractors (in the sense of Baire) and also study the typical Hausdorff and packing measures with respect to different gauge functions.  Finally, we give a number of illustrative examples and open questions.
\\ \\
We find that the \emph{dimensions} of typical attractors behave rather well.  In particular, the typical Hausdorff and lower box dimension are always as small as possible and the typical packing and upper box dimensions are always as large as possible.  In comparison, the typical Hausdorff and packing \emph{measures} behave rather badly. We provide examples where the typical Hausdorff measure in the critical dimension is as small as possible and examples where it is as large as possible (with similar examples concerning packing measure).  We find that in the simpler setting of random self-similar sets the behaviour of the typical Hausdorff and packing measures is more predictable.

\subsection{The random model} \label{randommodel}

Let $(K, d)$ be a compact metric space.  A (deterministic) iterated function system (IFS) is a finite set of contraction mappings on $K$.  Given such an IFS, $\{S_1, \dots, S_m\}$, it is well-known that there exists a unique non-empty compact set $F$ satisfying
\[
F= \bigcup_{i=1}^{m} S_{i}(F)
\]
which is called the attractor of the IFS.  We define a random iterated function system (RIFS) to be a set $\mathbb{I} = \{ \mathbb{I}_1, \dots, \mathbb{I}_N\}$, where each $\mathbb{I}_i$ is a deterministic IFS, $\mathbb{I}_i = \{ S_{i,j} \}_{j \in \mathcal{I}_i}$, for a finite index set, $\mathcal{I}_i$, and each map, $S_{i,j}$, is a contracting bi-Lipschitz self-map on $K$.  We define a continuum of attractors of $\mathbb{I}$ in the following way.  Let $D= \{1, \dots, N\}$, $\Omega = D^\mathbb{N}$ and let $\omega = (\omega_1, \omega_2, \dots) \in \Omega$.  Define the attractor of $\mathbb{I}$ corresponding to $\omega$ by
\[
F_\omega \, = \, \bigcap_{k}\  \bigcup_{i_1\in \mathcal{I}_{\omega_1}, \dots, i_k \in\mathcal{I}_{\omega_k}} S_{\omega_1, i_1} \circ \dots \circ S_{\omega_k,i_k}(K).
\]
So, by `randomly choosing' $\omega \in \Omega$, we `randomly choose' an attractor $F_\omega$.  Attractors of RIFSs can enjoy a much richer and more complicated structure than attractors of IFSs.  Some pictures have been included in Section \ref{pics} to help illustrate this.  We now wish to make statements about the generic nature of $F_\omega$.  In particular, what is the generic dimension of $F_\omega$?  In the following section we briefly recall the notions of dimension we will be interested in.

\subsection{Dimension and measure}

Let $F$ be a subset of $K$.  For $s \geq 0$ and $\delta>0$ we define the $\delta$-approximate $s$-dimensional Hausdorff measure of $F$ by

\[
\mathcal{H}_\delta^s (F) = \inf \bigg\{ \sum_{i \in I} \lvert U_i \rvert^s : \{ U_i \}_{i \in I} \text{ is a countable $\delta$-cover of $F$ by open sets} \bigg\}
\]

and the $s$-dimensional Hausdorff (outer) measure of $F$ by $\mathcal{H}^s (F) = \lim_{\delta \to 0} \mathcal{H}_\delta^s (F)$.  The Hausdorff dimension of $F$ is
\[
\dim_\text{H} F = \inf \Big\{ s \geq 0: \mathcal{H}^s (F) =0 \Big\} = \sup \Big\{ s \geq 0: \mathcal{H}^s (F) = \infty \Big\}.
\]

If $F$ is compact, then we may define the Hausdorff measure of $F$ in terms of \emph{finite} covers.  Packing measure, defined in terms of \emph{packings}, is a natural dual to Hausdorff measure, which was defined in terms of \emph{covers}.  For $s \geq 0$ and $\delta>0$ we define the $\delta$-approximate $s$-dimensional packing pre-measure of $F$ by

\[
\mathcal{P}_{0,\delta}^s (F) = \sup \bigg\{ \sum_{i \in I} \lvert U_i \rvert^s : \{ U_i \}_{i \in I} \text{ is a countable centered $\delta$-packing of $F$ by closed balls} \bigg\}
\]

and the $s$-dimensional packing pre-measure of $F$ by $\mathcal{P}_0^s (F) = \lim_{\delta \to 0} \mathcal{P}_{0,\delta}^s (F)$.  To ensure countable stability, the packing (outer) measure of $F$ is defined by

\[
\mathcal{P}^s (F) = \inf \bigg\{\sum_i \mathcal{P}_0^s (F_i) : F \subseteq \bigcup_i F_i  \bigg\}
\]

and the packing dimension of $F$ is

\[
\dim_\text{P} F = \inf \Big\{ s \geq 0: \mathcal{P}^s (F) =0 \Big\} = \sup \Big\{ s \geq 0: \mathcal{P}^s (F) = \infty \Big\}.
\]

A less sophisticated, but very useful, notion of dimension is box (or box-counting) dimension.  The lower and upper box dimensions of $F$ are defined by
\[
\underline{\dim}_\text{B} F = \liminf_{\delta \to 0} \, \frac{\log N_\delta (F)}{-\log \delta}
\qquad
\text{and}
\qquad
\overline{\dim}_\text{B} F = \limsup_{\delta \to 0} \,  \frac{\log N_\delta (F)}{-\log \delta},
\]
respectively, where $N_\delta (F)$ is smallest number of open sets required for a $\delta$-cover of $F$.  If $\underline{\dim}_\text{B} F = \overline{\dim}_\text{B} F$, then we call the common value the box dimension of $F$ and denote it by $\dim_\text{B} F$.
\\ \\
In general we have the following relationships between the dimensions discussed above
\[
\begin{array}{ccccc}
& &                                                                              \dim_\text{\P} F                                    & &  \\
 &                                 \rotatebox[origin=c]{45}{$\leq$}          & &                \rotatebox[origin=c]{315}{$\leq$} &  \\
 \dim_\text{\H} F                                             & & & &                         \overline{\dim}_\text{B} F \\
 &                                 \rotatebox[origin=c]{315}{$\leq$}       & &                  \rotatebox[origin=c]{45}{$\leq$} &  \\
& &                                                                        \underline{\dim}_\text{B} F                           & &  
\end{array}
\]
and, furthermore, if $F$ is compact and every ball centered in $F$ intersects $F$ in a set with upper box dimension the same as $F$, then $\dim_\text{\H} F  \leq  \underline{\dim}_\text{B} F  \leq \dim_\text{\P} F = \overline{\dim}_\text{B} F$.  This will apply in our situation.
\\ \\
It is possible to consider a `finer' definition of Hausdorff and packing dimension.  We define a \emph{gauge function} to be a function, $G: (0,\infty) \to (0,\infty)$, which is continuous, monotonically increasing and satisfies $\lim_{t \to 0} G(t) = 0$.  We then define the Hausdorff measure, packing pre-measure and packing measure with respect to the gauge $G$ as
\[
\mathcal{H}^G (F) = \lim_{\delta \to 0} \, \inf \bigg\{ \sum_{i \in I} G(\lvert U_i \rvert) : \{ U_i \}_{i \in I} \text{ is a countable $\delta$-cover of $F$ by open sets} \bigg\},
\]
\[
\mathcal{P}^G_0 (F) = \lim_{\delta \to 0} \, \sup \bigg\{ \sum_{i \in I} G( \lvert U_i \rvert) : \{ U_i \}_{i \in I} \text{ is a countable centered $\delta$-packing of $F$ by closed balls} \bigg\}
\]
and
\[
\mathcal{P}^G (F) = \inf \bigg\{\sum_i \mathcal{P}_0^G (F_i) : F \subseteq \bigcup_i F_i  \bigg\}
\]
respectively.  Note that if $G(t) = t^s$ then we obtain the standard Hausdorff and packing measures.  The advantage of this approach is that, in the case where the measure of a set is zero or infinite in its dimension, one may be able to find an appropriate gauge for which the measure is positive and finite.  For example, with probability 1, Brownian trails in $\mathbb{R}^2$ have Hausdorff dimension 2, but 2-dimensional Hausdorff measure equal to zero.  However, with probability 1, they have positive and finite $\mathcal{H}^G$-measure with respect to the gauge $G(t) = t^2 \log(1/t) \, \log \log \log(1/t) $, see \cite{falconer}, Chapter 16, and the references therein.
\\ \\
For a given gauge function, $G$, and a constant, $c>0$, we define
\[
D^-(G,c) = \inf_{t>0} \frac{G(c \, t)}{G(t)} \qquad \text{and} \qquad D^+(G,c) = \sup_{t>0} \frac{G(c \, t)}{G(t)}.
\]
Notice that, if $c\leq 1$, then $D^+(G,c) \leq 1$.  It is easy to see that if $0<D^-(G,c) \leq D^+(G,c) < \infty$ for some $c>0$, then $0<D^-(G,c) \leq D^+(G,c) < \infty$ for all $c>0$ and in this case we say that the gauge is \emph{doubling}.  The standard gauge is clearly doubling, with $D^-(G,c) = D^+(G,c) = c^s$.
\\ \\
For a more detailed discussion of this finer approach to dimension see, \cite{falconer}, Section 2.5, or \cite{rogers}, Chapter 2.

\begin{rem}
We have defined Hausdorff measure and box dimension by means of covers by open sets.  We do this for technical reasons and note that these definitions are equivalent to the standard definitions using  covers by arbitrary sets, see \cite{mattila} Theorem 4.4.
\end{rem}

\subsection{Separation properties}

In this section we introduce some separation properties which will be required for some of our results.  Note that our main Theorem (Theorem \ref{main}) does not require any separation properties.

\begin{defn}
We say that a deterministic IFS, $\{S_i\}_{i=1}^m$, satisfies the \emph{open set condition (OSC)}, if there exists a non-empty open set, $\mathcal{O}$, such that
\[
\bigcup_{i=1}^m S_{i}(\mathcal{O}) \subseteq \mathcal{O}
\]
with the union disjoint.
\end{defn}

We generalise the OSC to the RIFS situation in the following way.

\begin{defn}
We say that $\mathbb{I}$ satisfies the \emph{uniform open set condition (UOSC)}, if each deterministic IFS satisfies the OSC and the open set can be chosen uniformly, i.e., there exists a non-empty open set $\mathcal{O} \subseteq K$ such that, for each $i \in D$, we have
\[
\bigcup_{j \in \mathcal{I}_i} S_{i,j}(\mathcal{O}) \subseteq \mathcal{O}
\]
with the union disjoint.
\end{defn}

The UOSC also appears in, for example, \cite{vvariable2}.

\begin{defn}
Let $\mu$ be a Borel measure supported on $K$.  We say that $\mathbb{I}$ satisfies the \emph{$\mu$-measure separated condition ($\mu$-MSC)}, if, for all $\omega \in \Omega, \ l \in D$ and $i, j \in \mathcal{I}_{l}$ with $i \neq j$, we have
\[
\mu\big(S_{l, i}(F_\omega) \cap S_{l, j}(F_\omega) \big) = 0.
\]
\end{defn}

The $\mu$-MSC means that $\mu$ will be additive on the subsets of $F_\omega$ corresponding to images of finite (distinct) sequences of maps, $S_{\omega_1, i_1}, \dots, S_{\omega_k, i_k}$.  In this paper we will use the $\mu$-MSC with $\mu$ equal to either the Hausdorff or packing measure.

\subsection{Bi-Lipschitz maps}

In Section 1.1 we mentioned that our maps are assumed to be bi-Lipschitz contractions.  In this section we will fix some related notation which we will need to state some of our results.  For a map $\phi:K \to K$ define
\[
\text{Lip}^-(\phi) = \inf_{x,y \in K} \,  \frac{d\big(\phi(x), \phi(y)\big)}{d(x,y)} \qquad \text{and} \qquad \text{Lip}^+(\phi) = \sup_{x,y \in K} \, \frac{d\big(\phi(x), \phi(y)\big)}{d(x,y)}.
\]
If $\text{Lip}^+(\phi)<\infty$, then we say $\phi$ is \emph{Lipschitz} and if, in addition, $\text{Lip}^-(\phi)>0$, then we say $\phi$ is \emph{bi-Lipschitz}.  If $\text{Lip}^+(\phi)<1$, then we say $\phi$ is a \emph{contraction}.  Finally, if $\text{Lip}^-(\phi) = \text{Lip}^+(\phi)< 1$, then we write $\text{Lip}(\phi)$ to denote the common value and say that $\phi$ is a \emph{similarity}.  Given a deterministic IFS, $\{ S_{1}, \dots,  S_{k}\}$, consisting of similarities, the \emph{similarity dimension} is defined to be the unique solution to Hutchison's formula
\[
\sum_{i=1}^{k} \text{Lip}(S_i)^s = 1.
\]
It is well-known that if such an IFS satisfies the OSC, then the similarity dimension equals the Hausdorff, packing and box dimension of the attractor, see \cite{falconer} Section 9.3.

\subsection{The probabilistic approach} \label{probapp}

The most common approach to studying random fractals is to associate a probability measure with the space of possible attractors and then make \emph{almost sure} statements.  For some examples based on conformal systems, see \cite{randomfractals}, \cite{randomselfconformal}, \cite{olsenbook}, \cite{vvariable2}, \cite{vvariable}, \cite{superfractals}; and for non-conformal (self-affine) systems, see \cite{random_carpets}, \cite{random_carpets2} \cite{statselfaffine}, \cite{randomsponges}, \cite{me_random}.  For the random model we described in Section \ref{randommodel} this probabilistic approach would go as follows.  Associate a probability vector, $\textbf{\emph{p}} = (p_1, \dots, p_N)$, with $\mathbb{I}$.  Then, to obtain our random attractor, we choose each entry in $\omega$ randomly and independently with respect to $\textbf{\emph{p}}$.  This induces a probability measure, $\mathbb{P}$, on $\Omega$ given by
\[
\mathbb{P} = \prod_\mathbb{N}  \, \sum_{i=1}^{N} p_i \, \delta_i,
\]
where $\delta_i$ is the Dirac measure concentrated at $i \in D= \{1, \dots, N\}$.  We then say a property of the random attractors is generic if it occurs for $\mathbb{P}$-almost all $\omega \in \Omega$.  This approach has attracted much attention in the literature with the ergodic theorem often playing a key role in the analysis, utilising the fact that $\mathbb{P}$ is ergodic with respect to the left shift on $\Omega$.  We give a couple of examples.

\begin{thm}[\cite{vvariable2}] \label{almostsuress}
Let $\mathbb{I}=\{\mathbb{I}_1, \dots, \mathbb{I}_N\}$ be an RIFS consisting of similarity maps on $\mathbb{R}^n$ with associated probability vector $\textbf{p}=(p_1, \dots, p_N)$.  Assume that $\mathbb{I}$ satisfies the UOSC and let $s$ be the solution of
\begin{equation} \label{randomhutch}
\prod_{i=1}^{N}  \bigg(\sum_{j \in \mathcal{I}_i} \text{\emph{Lip}}(S_{i,j})^s\bigg)^{p_i} = 1.
\end{equation}
Then, for $\mathbb{P}$ almost all $\omega \in \Omega$, $\dim_\text{\emph{H}} F_\omega  = \dim_\text{\emph{B}} F_\omega  = \dim_\text{\emph{P}} F_\omega  = s$.
\end{thm}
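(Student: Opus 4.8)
The plan is to show that, almost surely, $\overline{\dim}_\text{B} F_\omega \le s$ and $\dim_\text{H} F_\omega \ge s$; since the general inequalities recorded earlier give $\dim_\text{H} F_\omega \le \dim_\text{P} F_\omega \le \overline{\dim}_\text{B} F_\omega$ and $\dim_\text{H} F_\omega \le \underline{\dim}_\text{B} F_\omega \le \overline{\dim}_\text{B} F_\omega$, these two bounds squeeze all four dimensions to $s$. Write $r_{i,j} = \text{Lip}(S_{i,j}) \in (0,1)$ and $\Phi_i(t) = \sum_{j\in\mathcal{I}_i} r_{i,j}^t$, so that the defining equation (\ref{randomhutch}) reads $\prod_i \Phi_i(s)^{p_i} = 1$, i.e. $\sum_{i=1}^N p_i \log\Phi_i(s) = 0$. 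The engine of the argument is that, as $\omega_1,\omega_2,\dots$ are i.i.d.\ with law $\textbf{\emph{p}}$, the variables $\log\Phi_{\omega_l}(t)$ are i.i.d.\ with mean $\sum_i p_i\log\Phi_i(t)$, which vanishes exactly at $t=s$ and is negative for $t>s$ (each $\Phi_i$ being strictly decreasing). The strong law of large numbers then controls the level-$k$ symbolic sums $\prod_{l=1}^k\Phi_{\omega_l}(t)$, because for similarities the cylinder $S_{\omega_1,i_1}\circ\cdots\circ S_{\omega_k,i_k}(K)$ has diameter exactly $\big(\prod_{l=1}^k r_{\omega_l,i_l}\big)|K|$.

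For the upper bound, fix $t>s$ and, for $\delta>0$, let $\Lambda_\delta$ be the antichain of minimal words whose cylinder has diameter at most $\delta$; these cylinders cover $F_\omega$ with diameters comparable to $\delta$, so $N_\delta(F_\omega)$ is comparable to $\lvert\Lambda_\delta\rvert$. Since $\prod_{l\le|w|} r_{\omega_l,i_l}\asymp\delta$ on $\Lambda_\delta$, I would bound $\lvert\Lambda_\delta\rvert \lesssim \delta^{-t}\sum_{w\in\Lambda_\delta}\prod_{l\le|w|} r_{\omega_l,i_l}^t$, and estimate the remaining antichain sum by factoring out the normalised weights $\prod_l r_{\omega_l,i_l}^t/\Phi_{\omega_l}(t)$, which sum to at most $1$ over any antichain, obtaining $\sum_{w\in\Lambda_\delta}\prod_l r_{\omega_l,i_l}^t \le \sup_k \prod_{l=1}^k\Phi_{\omega_l}(t)$. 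As $t>s$ makes $\sum_{l=1}^k\log\Phi_{\omega_l}(t)$ a random walk with strictly negative drift, this supremum is finite almost surely, giving $N_\delta(F_\omega)\lesssim \delta^{-t}$ and hence $\overline{\dim}_\text{B} F_\omega\le t$; letting $t\downarrow s$ finishes this half, notably without any separation hypothesis.

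For the lower bound I would invoke the UOSC. Define the natural random measure $\mu_\omega$ on $F_\omega$ by assigning the cylinder of $(i_1,\dots,i_k)$ mass $\prod_{l=1}^k r_{\omega_l,i_l}^s/\Phi_{\omega_l}(s)$, a consistent family by the normalisation. The open set condition yields the standard bounded-overlap property in $\mathbb{R}^n$: a ball of radius $\rho$ meets only boundedly many cylinders of diameter comparable to $\rho$, so $\mu_\omega(B(x,\rho))$ is controlled by the mass of the single cylinder through $x$ that first reaches diameter $\rho$, at some level $k=k(x,\rho)$. This gives $\log\mu_\omega(B(x,\rho)) \approx s\log\rho - \sum_{l=1}^k\log\Phi_{\omega_l}(s)$ while $\log\rho\approx\sum_{l=1}^k\log r_{\omega_l,i_l}$, so the local dimension equals $s - \big(\sum_{l=1}^k\log\Phi_{\omega_l}(s)\big)/\big(\sum_{l=1}^k\log r_{\omega_l,i_l}\big)$. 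Here the numerator is the mean-zero walk from the defining equation, which is $o(k)$ by the strong law, whereas the denominator grows linearly (its rate $\sum_i p_i\sum_j (r_{i,j}^s/\Phi_i(s))\log r_{i,j}$ is a strictly negative constant, by a further ergodic argument). The ratio thus tends to $0$, the local dimension tends to $s$ for $\mu_\omega$-almost every $x$, and the mass distribution principle yields $\dim_\text{H} F_\omega\ge s$.

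The main obstacle is this lower bound, and specifically the interplay of the two independent sources of randomness. The fluctuations of $\prod_{l=1}^k\Phi_{\omega_l}(s)$ at the critical exponent are genuine, since the underlying walk has zero mean and is therefore recurrent, so one cannot hope for a clean uniform estimate $\mu_\omega(B(x,\rho))\le C\rho^s$; what rescues the argument is that dimension is a ratio of logarithms, in which a sublinear numerator is swamped by a linear denominator. Care is likewise needed in the ergodic step controlling $\sum_l\log r_{\omega_l,i_l}$ along a $\mu_\omega$-typical branch, which mixes the outer choice of $\omega$ with the inner $\mu_\omega$-random choice of symbols and is cleanest to phrase via the ergodic theorem for the associated skew product, and in the passage from the symbolic cylinder estimate to the geometric ball estimate, where the UOSC is precisely what is required.
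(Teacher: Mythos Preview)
The paper does not actually prove this theorem. Theorem~\ref{almostsuress} is quoted from the literature as background (it is attributed to \cite{vvariable2}), and immediately after its statement the paper writes: ``For a proof of Theorem~\ref{almostsuress}, see \cite{vvariable2}, or alternatively, \cite{superfractals}, Chapter~5.7, and the references therein.'' There is therefore no proof in this paper against which your proposal can be compared.

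That said, your sketch follows the standard route for results of this type and is broadly sound. The upper bound via the strong law for the i.i.d.\ variables $\log\Phi_{\omega_l}(t)$ with $t>s$ (negative drift forcing $\sup_k\prod_{l\le k}\Phi_{\omega_l}(t)<\infty$ a.s.) is clean and indeed needs no separation. For the lower bound, the natural random measure together with the UOSC bounded-overlap argument and the mass distribution principle is exactly what one expects; you have correctly identified the delicate point, namely that the local-dimension analysis mixes the $\mathbb{P}$-randomness in $\omega$ with the $\mu_\omega$-randomness in the coding, and that this is most transparently handled by the ergodic theorem applied to the skew product (or, equivalently, by viewing the pair $(\omega_l,i_l)$ as an i.i.d.\ sequence under the joint law). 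One small quibble: for the upper bound it is slightly cleaner to argue that $\frac{1}{k}\sum_{l\le k}\log\Phi_{\omega_l}(t)\to\sum_i p_i\log\Phi_i(t)<0$ directly gives $\prod_{l\le k}\Phi_{\omega_l}(t)\to 0$, since the antichain $\Lambda_\delta$ has all its words of length comparable to $\log(1/\delta)$ and you only need the product at those levels, not the finiteness of the supremum over all $k$.
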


Equation \ref{randomhutch} should be viewed as a randomised version of Hutchison's formula.  Here the almost sure dimension is `some sort of' weighted average of the dimensions of the attractors of $\mathbb{I}_i$.  For a proof of Theorem \ref{almostsuress}, see \cite{vvariable2}, or alternatively, \cite{superfractals}, Chapter 5.7, and the references therein.
\\ \\
Self-affine sets are an important class of fractals and often provide examples of strange behaviour not observed in the self-similar setting.  We will now discuss a well-studied class of self-affine sets and random self-affine sets which we will use in Section \ref{examples} to demonstrate some important phenomena.  Take the unit square and divide it up into an $m \times n$ grid for some $m\leq n$.  Now choose a subset of the rectangles formed by the grid and form an IFS of affine maps which take the unit square onto each chosen subrectangle, preserving orientation.  The attractor of this system is called a self-affine Sierpi\'nski carpet.  A formula for the Hausdorff dimension was obtained independently by Bedford \cite{bedford} and McMullen \cite{mcmullen}.  Now consider a random Sierpi\'nski carpet where we take $N$ deterministic IFSs, $\mathbb{I}_i$, built by dividing the unit square into an $m_i \times n_i$ grid $m_i \leq n_i$ and an associated probability vector $\textbf{\emph{p}}=(p_1, \dots, p_N)$.  The following dimension formula was given in \cite{random_carpets2} and can be derived from results in \cite{me_random}.

\begin{thm}[\cite{me_random}, \cite{random_carpets2}] \label{melars}
For $j = 1 \dots m_i$, let $C_{i,j} \in \{0,\dots, m_i\}$ denote the number of rectangles chosen in the $j$th column in the $i$th IFS.  Let
\[
\nu_1 = m_1^{p_1} \cdots m_N^{p_N} \qquad \text{and} \qquad \nu_2 = n_1^{p_1} \cdots n_N^{p_N}.
\]
Then, for $\mathbb{P}$ almost all $\omega \in \Omega$,
\[
\dim_\text{\emph{H}} F_\omega  = \sum_{i=1}^N p_i \,  \Bigg(\frac{1}{\log \nu_1} \log\bigg(\sum_{j=1}^{m_i} C_{i,j}^{\log \nu_1 / \log \nu_2}\bigg) \Bigg).
\]
\end{thm}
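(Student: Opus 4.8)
The plan is to prove the formula by the two-sided argument familiar from Bedford--McMullen theory, with the random choice of IFS at each level absorbed into the strong law of large numbers. I would code points of $F_\omega$ by sequences $\mathbf a = (a_1, a_2, \dots)$ in which $a_l$ names a chosen rectangle of $\mathbb I_{\omega_l}$, and write $j_l = j_l(\mathbf a) \in \{1, \dots, m_{\omega_l}\}$ for its column. A level-$k$ cylinder is then a rectangle of width $W_k(\omega) = \prod_{l=1}^k m_{\omega_l}^{-1}$ and height $H_k(\omega) = \prod_{l=1}^k n_{\omega_l}^{-1}$; since $m_i \le n_i$ it is wider than it is tall. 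As the $\omega_l$ are i.i.d.\ with $\mathbb P(\omega_l = i) = p_i$, the strong law gives, for $\mathbb P$-a.e.\ $\omega$,
\[
-\tfrac1k \log W_k(\omega) \to \log \nu_1, \qquad -\tfrac1k \log H_k(\omega) \to \log \nu_2,
\]
which is precisely why $\nu_1$ and $\nu_2$ take the roles of the deterministic $m$ and $n$. Writing $s = \log\nu_1 / \log\nu_2 \le 1$ and setting $d = \tfrac{1}{\log\nu_1}\sum_{i=1}^N p_i \log\big(\sum_{j=1}^{m_i} C_{i,j}^{\,s}\big)$ (which is exactly the asserted value), I would introduce approximate squares: for a vertical resolution $k$ let $\ell = \ell(k,\omega)$ be the largest integer with $W_\ell(\omega) \ge H_k(\omega)$, so that the horizontal resolution at level $\ell$ matches the vertical resolution at level $k$; the strong law gives $\ell / k \to \log\nu_2 / \log\nu_1 = 1/s$ a.s. An approximate square of side comparable to $H_k(\omega)$ is specified by fixing $a_1, \dots, a_k$ together with the columns $j_{k+1}, \dots, j_\ell$, leaving the vertical refinement beyond level $k$ free.

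For the upper bound I would cover $F_\omega$ by these approximate squares and estimate the corresponding $d$-dimensional Hausdorff sum. Counting the squares column by column expresses the sum as a product over the levels, and taking logarithms turns it into a Birkhoff sum; the strong law then shows this sum neither blows up nor decays exponentially, giving $\dim_\text{H} F_\omega \le d$ for $\mathbb P$-a.e.\ $\omega$. The same count simultaneously produces $\overline{\dim}_\text{B} F_\omega$, which is in general strictly larger, so a bare approximate-square count cannot by itself give the sharp Hausdorff value.

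For the lower bound I would build the random McMullen measure $\mu_\omega$: at each level $l$, with $\omega_l = i$, choose independently the rectangle lying in column $j$ with probability proportional to $C_{i,j}^{\,s-1}$, normalised by $\sum_{j} C_{i,j}^{\,s}$, which one checks is a probability vector on $\mathcal I_i$. Computing the $\mu_\omega$-mass of an approximate square yields
\[
\log \mu_\omega(\text{square}) \approx -\sum_{l=1}^\ell \log\Big(\sum_j C_{\omega_l,j}^{\,s}\Big) + (s\ell - k)\,\bar E,
\]
where $\bar E$ is a finite constant arising from the column marginals of $\mu_\omega$; the scale-matching $W_\ell(\omega) = H_k(\omega)$ forces $s\ell - k \to 0$, so the second term drops out. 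Dividing by $\log H_k(\omega) \approx -k\log\nu_2$ and using $\ell \approx k/s$ together with $s\log\nu_2 = \log\nu_1$, the local dimension of $\mu_\omega$ converges a.s.\ to $d$. The mass distribution principle (Billingsley's lemma) then delivers $\dim_\text{H} F_\omega \ge d$, and hence equality.

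The hardest part will be the coordination of the two random clocks: the horizontal level $\ell$ and the vertical level $k$ advance at different, $\omega$-dependent rates, and each approximate square straddles a random band of levels. One must show that the relevant averages converge almost surely \emph{uniformly} in the scale, and not merely along the subsequence $r = H_k(\omega)$, so that the local-dimension estimate passes to all radii $r$; I would arrange this by a sandwiching argument between consecutive generations, using that $\log m_{\omega_l}$ and $\log n_{\omega_l}$ take finitely many values and so the per-step fluctuations are bounded. A secondary technical point is that the cancellation in the displayed estimate requires a law of large numbers for the array $\{\log C_{\omega_l,j_l}\}$, whose entries are, conditionally on $\omega$, independent but not identically distributed; this is handled by applying Kolmogorov's criterion for a.e.\ fixed $\omega$ and then averaging over $\omega$. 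Finally, it is the non-uniformity of the fibres that forces the use of $\mu_\omega$ rather than a naive count, thereby separating the Hausdorff dimension from the larger box dimension.
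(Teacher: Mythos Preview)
The paper does not prove this theorem at all: it is quoted as background from the cited references \cite{me_random} and \cite{random_carpets2}, and no argument for it appears anywhere in the present paper. So there is nothing to compare your proposal against here.

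That said, your outline is the standard Bedford--McMullen strategy adapted to the random setting, which is indeed the approach taken in the cited sources: approximate squares with two independent random clocks synchronised via the strong law, a cover count for the upper bound, and a McMullen-type Bernoulli measure with column weights $C_{i,j}^{s-1}/\sum_j C_{i,j}^{s}$ for the lower bound via the mass distribution principle. Your identification of the delicate point---uniform control of the ratio $\ell(k,\omega)/k$ and of the associated Birkhoff sums across all scales, not just along $r=H_k(\omega)$---is accurate, and the sandwiching between consecutive generations using boundedness of $\log m_i$, $\log n_i$ is the right remedy. One small correction: in your displayed estimate for $\log\mu_\omega(\text{square})$ the error term should be governed by $k - s\ell$ (equivalently by $\log W_\ell - \log H_k$), and while this quantity is bounded it need not tend to zero; what makes it negligible after dividing by $\log H_k(\omega)\approx -k\log\nu_2$ is that it is $O(1)$ against a denominator of order $k$, not that it vanishes.
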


We note that in \cite{me_random} a higher dimensional analogue of Theorem \ref{melars} was obtained where one begins the construction with the unit cube in $\mathbb{R}^d$ rather than the unit square.  Notice that if $m_i=m$ and $n_i=n$ for all $i$, then the above dimension formula simplifies to
\[
\dim_\H F_\omega  = \sum_{i=1}^N p_i \,  \Bigg(\frac{1}{\log m} \log\bigg(\sum_{j=1}^{m} C_{i,j}^{\log m / \log n}\bigg) \Bigg) = \sum_{i=1}^N p_i \, s_i,
\]
where $s_i$ is the Hausdorff dimension of the attractor of the attractor of $\mathbb{I}_i$ given by Bedford and McMullen.  In this case, the almost sure Hausdorff and box dimension were computed in \cite{random_carpets}.  If the $m_i$ and $n_i$ are not chosen uniformly, then we have a nonlinear dependence on the probability vector $\textbf{\emph{p}}$.  An example using Theorem \ref{melars} will be given in Section \ref{affineexample}.

\subsection{The topological approach} \label{topapp}

In this paper we will investigate the generic dimension and measure of $F_\omega$ from a topological point of view using Baire Category.  In this section we will recall the basic definitions and theorems.
\\ \\
Let $(X,d)$ be a complete metric space.  A set $N \subseteq X$ is \emph{nowhere dense} if for all $x \in N$ and for all $r>0$ there exists a point $y \in X \setminus N$ and $t>0$ such that
\[
B(y,t) \subseteq B(x,r) \setminus N.
\]
A set $M$ is said to be \emph{of the first category}, or, \emph{meagre}, if it can be written as a countable union of nowhere dense sets.  We think of a meagre set as being \emph{small} and the complement of a meagre set as being \emph{big}.  A set $T \subseteq X$ is \emph{residual} or \emph{co-meagre}, if $X \setminus T$ is meagre.  A property is called \emph{typical} if the set of points which have the property is residual.  In Section \ref{proofs} we will use the following theorem to test for typicality without mentioning it explicitly.

\begin{thm}
In a complete metric space, a set $T$ is residual if and only if $T$ contains a countable intersection of open dense sets or, equivalently, $T$ contains a dense $G_\delta$ subset of $X$.
\end{thm}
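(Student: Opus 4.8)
The plan is to prove that the two stated characterizations coincide and then establish the main equivalence against the cleaner of them. First I would observe that the two phrasings---``$T$ contains a countable intersection of open dense sets'' and ``$T$ contains a dense $G_\delta$ subset''---describe the same collection of sets. A countable intersection of open sets is by definition a $G_\delta$, and such an intersection is dense precisely by the Baire Category Theorem, which is available because $X$ is complete; so every countable intersection of open dense sets is a dense $G_\delta$. Conversely, if $G = \bigcap_n U_n \subseteq T$ is a dense $G_\delta$ with each $U_n$ open, then $G \subseteq U_n$ forces each $U_n$ to be dense (a superset of a dense set is dense), so $G$ already exhibits $T$ as containing a countable intersection of \emph{open dense} sets. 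Hence it suffices to prove that $T$ is residual if and only if $T$ contains a dense $G_\delta$.

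For the ``if'' direction, suppose $G = \bigcap_n U_n \subseteq T$ with each $U_n$ open and dense. Then each $X \setminus U_n$ is closed with empty interior, hence nowhere dense, so $X \setminus G = \bigcup_n (X \setminus U_n)$ is meagre. Since $X \setminus T \subseteq X \setminus G$, and any subset of a meagre set is again meagre (intersecting each nowhere dense piece with $X \setminus T$ keeps it nowhere dense), the set $X \setminus T$ is meagre and $T$ is residual.

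For the ``only if'' direction, I would use the definition of residual to write $X \setminus T = \bigcup_n N_n$ with each $N_n$ nowhere dense, and set $U_n = X \setminus \overline{N_n}$. Each $U_n$ is open, and it is dense exactly because $\overline{N_n}$ has empty interior. Put $G = \bigcap_n U_n = X \setminus \bigcup_n \overline{N_n}$. Since $\bigcup_n \overline{N_n} \supseteq \bigcup_n N_n = X \setminus T$, we obtain $G \subseteq T$; and $G$ is a countable intersection of open dense sets, hence a dense $G_\delta$ by the Baire Category Theorem. This produces the required dense $G_\delta$ inside $T$.

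The only substantive input is the Baire Category Theorem---the assertion that in a complete metric space a countable intersection of open dense sets is dense---and this is the single place where completeness of $X$ enters; everything else is bookkeeping with complements, closures and interiors. Accordingly I do not expect a genuine obstacle, only one point requiring care: the definition of nowhere dense recorded above (via sub-balls $B(y,t) \subseteq B(x,r) \setminus N$) must first be reconciled with the standard formulation ``$\overline{N}$ has empty interior'' that the argument actually manipulates, and verifying this equivalence---though routine---is where I would be most careful.
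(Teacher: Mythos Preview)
Your argument is correct and is the standard proof of this characterisation; the paper itself gives no proof at all beyond the citation ``See \cite{oxtoby}'', so there is nothing to compare against. Your closing remark about reconciling the paper's ball-based definition of nowhere dense with the usual ``$\overline{N}$ has empty interior'' is well taken and is indeed the only place requiring a moment's thought (note in particular that the paper quantifies only over $x \in N$ rather than all $x \in X$, which is equivalent but not immediately so).
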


\begin{proof}  See \cite{oxtoby}. \end{proof}

In order to consider typical properties of members of $\Omega$, we need to topologize $\Omega$ in a suitable way.  We do this by equipping it with the metric $d_\Omega$ where, for $u = (u_1, u_2, \dots) \neq v = (v_1, v_2, \dots) \in \omega$,
\[
d_\Omega(u,v) = 2^{-k}
\]
where $k= \min\{n \in \mathbb{N} : u_n \neq v_n\}$.  The space $(\Omega, d_\Omega)$ is complete.  For a more detailed account of Baire Category the reader is referred to \cite{oxtoby}.
\\ \\
It is worth noting that one could also formulate the topological approach using the set $\{F_\omega : \omega \in \Omega\}$ instead of $\Omega$.  In fact, this leads to an equivalent analysis but since we do not use this approach directly we defer discussion of it until Section \ref{discuss} (9).

\section{Results} \label{results}

In this section we state our results.  In Section \ref{general} we state results which apply in very general circumstances, namely, the random iterated function systems introduced in Section 1.1.  Theorem \ref{main} is the main result of the paper and gives the typical Hausdorff, packing and upper and lower box dimensions of $F_\omega$ and, furthermore, gives sufficient conditions for the typical Hausdorff and packing measures with respect to any (doubling) gauge function to be zero or infinite.  In Section \ref{ssresults} we specialise to the self-similar setting.

\subsection{Results in the general setting} \label{general}

Our main result is the following.

\begin{thm} \label{main}
Let $G: (0,\infty) \to (0,\infty)$ be a gauge function.
\begin{itemize}
\item[(1)] If $\inf_{u \in \Omega} \,\mathcal{H}^G( F_u) = 0$, then for a typical $\omega \in \Omega$, we have $\mathcal{H}^G(F_\omega)  = 0$;
\item[(2)] If $G$ is doubling and $\sup_{u \in \Omega} \,\mathcal{P}^G( F_u) = \infty$, then for a typical $\omega \in \Omega$, we have $\mathcal{P}^G(F_\omega)  = \infty$;
\item[(3)] The typical Hausdorff dimension is infimal, i.e., for a typical $\omega \in \Omega$, we have
\[
\dim_\text{\emph{H}} F_\omega = \inf_{u \in \Omega} \, \dim_\text{\emph{H}} F_u;
\]
\item[(4)] The packing dimension and upper box dimension are supremal and, in fact, for a typical $\omega \in \Omega$, we have
\[
\overline{\dim}_\text{\emph{B}} F_\omega = \dim_\text{\emph{P}} F_\omega = \sup_{u \in \Omega} \, \overline{\dim}_\text{\emph{B}} F_u= \sup_{u \in \Omega} \, \dim_\text{\emph{P}} F_u;
\]
\item[(5)] The lower box dimension is infimal, i.e, for a typical $\omega \in \Omega$, we have
\[
\underline{\dim}_\text{\emph{B}} F_\omega = \inf_{u \in \Omega} \, \underline{\dim}_\text{\emph{B}} F_u.
\]
\end{itemize}
\end{thm}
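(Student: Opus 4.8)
The plan is to prove every part by producing, for the relevant quantity, a dense $G_\delta$ subset of $\Omega$ on which it takes the extremal value; Baire's theorem then gives typicality. The engine common to all parts is the self-referential identity obtained by iterating $F_\omega=\bigcup_{i\in\mathcal I_{\omega_1}}S_{\omega_1,i}(F_{\sigma\omega})$, namely $F_\omega=\bigcup_{|w|=k}\Phi_w\big(F_{\sigma^k\omega}\big)$, where $\sigma$ is the left shift, the union is over words $w=(i_1,\dots,i_k)$ with $i_l\in\mathcal I_{\omega_l}$, and $\Phi_w=S_{\omega_1,i_1}\circ\cdots\circ S_{\omega_k,i_k}$ is bi-Lipschitz with $\text{Lip}^+(\Phi_w)\le c^k$ and $\text{Lip}^-(\Phi_w)\ge b^k$, writing $c=\max_{i,j}\text{Lip}^+(S_{i,j})<1$, $b=\min_{i,j}\text{Lip}^-(S_{i,j})>0$ and $M_0=\max_i\lvert\mathcal I_i\rvert$. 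Two facts drive the density arguments: first, $\omega\mapsto F_\omega$ is continuous into the compact sets under the Hausdorff metric (since prefixes of length $k$ confine $F_\omega$ to a union of pieces of diameter $\le c^k\,\text{diam}(K)$); second, replacing the tail of $\omega$ after coordinate $k$ by a sequence $u^\ast$ produces $\omega'$ with $d_\Omega(\omega,\omega')\le 2^{-k}$ and $F_{\omega'}=\bigcup_{|w|=k}\Phi_w(F_{u^\ast})$, a finite union of bi-Lipschitz copies of $F_{u^\ast}$. Choosing $u^\ast$ to (nearly) realise the relevant infimum or supremum is how we drive the quantity to its extreme near any prescribed $\omega$.

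For the infimal parts (1), (3), (5) the correct semicontinuity is from above. For (1), a near-optimal finite open $\delta$-cover of the compact set $F_\omega$ also covers every $F_{\omega'}$ with $\omega'$ close to $\omega$, so $\omega\mapsto\mathcal H^G_\delta(F_\omega)$ is upper semicontinuous and $\{\mathcal H^G_{1/m}(F_\omega)<1/n\}$ is open; since $\mathcal H^G(F_\omega)=0$ exactly when $\mathcal H^G_{1/m}(F_\omega)=0$ for all $m$, the set $\{\mathcal H^G(F_\omega)=0\}=\bigcap_{n,m}\{\mathcal H^G_{1/m}(F_\omega)<1/n\}$ is $G_\delta$. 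Density of each factor follows by splicing in a tail $u^\ast$ with $\mathcal H^G(F_{u^\ast})$ arbitrarily small (possible as the infimum is $0$): pushing a cheap, fine cover of $F_{u^\ast}$ through each $\Phi_w$ keeps $\sum G(\lvert\Phi_w(U_i)\rvert)\le\sum G(\lvert U_i\rvert)$ because $G$ is increasing and $\Phi_w$ contracts (no doubling is needed here), and summing the $M_0^k$ pieces still gives a total below $1/n$. Part (3) is then immediate by applying (1) to $G(t)=t^s$ for a sequence $s\downarrow\inf_u\dim_{\mathrm H}F_u$, and part (5) is the verbatim analogue with the covering numbers $N_\delta$ (also upper semicontinuous), testing the $\liminf$ along rational scales $\delta<1/M$.

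For the supremal parts (2) and (4) upper semicontinuity fails, and the decisive change is to certify large values through objects that are lower semicontinuous under Hausdorff perturbation: $\delta$-separated subsets of $F_\omega$ and packings by closed balls centred in $F_\omega$ with strict gaps, both of which survive (after an arbitrarily small shrinkage) when $\omega$ is perturbed. Thus $\{\omega:F_\omega\ \text{contains}\ n\ \text{points pairwise more than}\ \delta\ \text{apart}\}$ is open, and unioning over rational $\delta<1/M$ and $n>\delta^{-(\overline b-\epsilon)}$ yields open sets whose intersection over $M$ forces $\overline{\dim}_{\mathrm B}F_\omega\ge\overline b-\epsilon$, where $\overline b=\sup_u\overline{\dim}_{\mathrm B}F_u$. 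Density comes from splicing a tail $u^\ast$ with $\overline{\dim}_{\mathrm B}F_{u^\ast}$ near $\overline b$ and transporting one of its large $\rho$-separated sets through a single deep copy $\Phi_w$: the separation shrinks by a factor $\ge b^k$, so the image sits at a scale below $1/M$ once $\rho$ is small, and the elementary scale bookkeeping (with $k$ fixed and $\rho\to0$) keeps the point count above $\delta^{-(\overline b-\epsilon)}$. Intersecting over $\epsilon=1/j$ gives $\overline{\dim}_{\mathrm B}F_\omega=\overline b$ typically; since in our setting $\dim_{\mathrm P}F=\overline{\dim}_{\mathrm B}F$ and $\dim_{\mathrm P}\le\overline{\dim}_{\mathrm B}$ always, the two suprema coincide and (4) follows.

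Part (2) is the genuine obstacle, because a large packing pre-measure need not survive the regularising infimum in $\mathcal P^G$. I would first show, exactly as for (4) but transporting whole packings instead of separated sets, that $R=\{\omega:\mathcal P^G_0(F_\omega)=\infty\}$ is residual; here doubling of $G$ is used precisely to keep $\sum G(\lvert\Phi_w(B_i)\rvert)\ge D^-(G,b^k)\sum G(\lvert B_i\rvert)>0$ after contracting, so a tail $u^\ast$ with $\mathcal P^G(F_{u^\ast})$ huge (available since the supremum is infinite) transfers a huge packing into $F_{\omega'}$. Since the shift is continuous and open, $\bigcap_k\sigma^{-k}R$ is again residual, so typically $\mathcal P^G_0(F_{\sigma^k\omega})=\infty$ for every $k$, and hence, by bi-Lipschitz invariance together with doubling, every cylinder piece $\Phi_w(F_{\sigma^{|w|}\omega})$ has infinite packing pre-measure. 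The final step---passing from this to $\mathcal P^G(F_\omega)=\infty$---is where the real work lies: in any countable decomposition $F_\omega=\bigcup_i F_i$, the compactness of $F_\omega$ and Baire's theorem applied inside $F_\omega$ force some $F_i$ to be non-meagre, hence dense in some relatively open ball-piece; as the cylinders form a basis and $\mathcal P^G_0(A)=\mathcal P^G_0(\overline A)$, that piece has infinite pre-measure, so $\mathcal P^G_0(F_i)=\infty$ and $\sum_i\mathcal P^G_0(F_i)=\infty$ for every decomposition, giving $\mathcal P^G(F_\omega)=\infty$. Making the ``infinite pre-measure on every piece'' step fully rigorous, including the centring technicalities for packings under perturbation and the $\mathcal P^G_0(A)=\mathcal P^G_0(\overline A)$ identity, is the part I expect to demand the most care.
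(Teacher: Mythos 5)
Your proposal is correct and, for parts (1), (2), (3) and (5), follows essentially the same route as the paper: write the target set as a countable intersection of sets of the form $\{\omega:\mathcal H^G_{1/m}(F_\omega)<\tfrac1n\}$, $\{\omega:\mathcal P^G_{0,1/m}(F_\omega)>n\}$, and so on; prove openness via the stability of finite covers and packings of the compact set $F_\omega$ under small Hausdorff-metric perturbations; and prove density by splicing an almost-extremal tail $u^\ast$ after a long prefix of $\omega$ and pushing covers or packings through the maps $\Phi_w$ (with $D^+(G,c)\le 1$ handling covers and $D^-(G,\text{Lip}^-(\Phi_w))>0$, i.e.\ doubling, handling packings). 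Your part (2) in particular reproduces the paper's two key lemmas: the Baire-category argument \emph{inside} $F_\omega$ showing that if every relatively open piece has infinite pre-measure then $\mathcal P^G(F_\omega)=\infty$ (Lemma \ref{packingkey}; the reduction to closed $F_i$ via $\mathcal P^G_0(A)=\mathcal P^G_0(\overline A)$ is exactly the technicality you flag), and the identification of $\{\mathcal P^G(F_\omega)=\infty\}$ with $\{\mathcal P^G_0(F_\omega)=\infty\}$ (Lemma \ref{usepre}). You genuinely diverge in two places, both sound but slightly less economical than the paper. First, in part (2) you pass to the residual set $\bigcap_k\sigma^{-k}R$ to guarantee that every cylinder piece has infinite pre-measure, whereas the paper obtains this pointwise for \emph{every} $\omega$ with $\mathcal P^G_0(F_\omega)=\infty$ from the finite subadditivity $\mathcal P^G_0(F_\omega)\le\lvert\mathcal I_{\omega_1}\rvert\cdots\lvert\mathcal I_{\omega_k}\rvert\,\mathcal P^G_0(F_{\sigma^k\omega})$, so no extra residuality argument is needed. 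Second, in part (4) you argue directly with $\delta$-separated sets, a perfectly good and more elementary route to the upper box dimension statement, whereas the paper simply reads (4) off from (2) applied to $G(t)=t^s$ along a sequence $s$ increasing to the supremum; in either case the packing-dimension half rests on Lemma \ref{packingupperbox} ($\dim_\text{P}F_\omega=\overline{\dim}_\text{B}F_\omega$ for all $\omega$), which, like you, the paper asserts with only a sketch. Your direct argument for (4) buys independence from the packing-measure machinery, at the cost of redoing scale bookkeeping the paper gets for free.
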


We will prove Theorem \ref{main} part (1) in Section \ref{hmeasure}; part (2) in Section \ref{pmeasure}; and part (5) in Section \ref{lbox}.  Choosing $G$ such that $G(t) = t^s$, part (3) follows from part (1) and part (4) follows from part (2) combined with the observation that the packing and upper box dimension coincide for all random attractors, see Lemma \ref{packingupperbox}.
\\ \\
It is slightly unsatisfactory that in Theorem \ref{main} part (1) we do not get a precise value for the typical Hausdorff measure if the infimal Hausdorff measure is positive and finite; and similarly,  in part (2) we do not get a precise value for the typical packing measure if the supremal packing measure is positive and finite.  In keeping with the rest of the results and what is `usually' expected when dealing with Baire category, one might expect that either: the typical Hausdorff measure will be the infimal value and the typical packing measure will be the supremal value; or, even though $F_\omega$ will typically be `small' in terms of Hausdorff dimension and `large' in terms of packing dimension, due to the influence of deterministic IFSs with non-extremal attractors, they will be `large' in terms of Hausdorff measure and `small' in terms of packing measure.  Surprisingly, both of these phenomena are possible.  In the following two theorems we identify a large class of RIFS where the second type of behaviour occurs.  Theorem \ref{infinite} refers to Hausdorff measure and Theorem \ref{zero} refers to packing measure.

\begin{thm} \label{infinite}
Write $h= \inf_{u \in \Omega} \, \dim_\text{\emph{H}} F_u$ and assume that $\mathbb{I}$ satisfies the $\mathcal{H}^h$-MSC and that there exists $v=(v_1, v_2, \dots) \in \Omega$ such that
\begin{equation} \label{lip-}
\lim_{l \to \infty} \sum_{j_1\in \mathcal{I}_{v_1}, \dots, j_l \in\mathcal{I}_{v_l}} \text{\emph{Lip}}^{-}(S_{v_1, j_1} \circ \dots \circ S_{v_l,j_l})^h = \infty.
\end{equation}
Then, 
\begin{itemize}
\item[(1)] If $\inf_{u \in \Omega} \,\mathcal{H}^h( F_u) = 0$, then for a typical $\omega \in \Omega$, we have $\mathcal{H}^h(F_\omega) = 0$;
 \item[(2)] If $\inf_{u \in \Omega} \,\mathcal{H}^h( F_u) >0$, then for a typical $\omega \in \Omega$, we have $\mathcal{H}^h(F_\omega) = \infty$.
\end{itemize}
\end{thm}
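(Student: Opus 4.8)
Part (1) requires no new work: taking $G(t)=t^h$ in Theorem \ref{main}(1) immediately gives that $\mathcal{H}^h(F_\omega)=0$ for a typical $\omega$ whenever $\inf_{u}\mathcal{H}^h(F_u)=0$. So the plan is to concentrate on part (2), where I would exhibit a dense $G_\delta$ set of $\omega$ on which $\mathcal{H}^h(F_\omega)=\infty$. Two ingredients drive the argument. First, the bi-Lipschitz hypothesis supplies the lower bound $\mathcal{H}^h(\phi(A)) \geq \text{Lip}^-(\phi)^h\,\mathcal{H}^h(A)$, since $\phi^{-1}$ is Lipschitz with constant $\text{Lip}^-(\phi)^{-1}$ on $\phi(A)$. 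Second, writing $\sigma$ for the left shift on $\Omega$, the construction is self-referential, $F_\omega = \bigcup_{j_1\in\mathcal{I}_{\omega_1},\dots,j_l\in\mathcal{I}_{\omega_l}} S_{\omega_1,j_1}\circ\cdots\circ S_{\omega_l,j_l}(F_{\sigma^l\omega})$, and under the $\mathcal{H}^h$-MSC these compact pieces are $\mathcal{H}^h$-essentially disjoint, so that $\mathcal{H}^h$ is additive over them.

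Combining the two ingredients yields the engine of the proof: for every $\omega$ and every $l$,
\[
\mathcal{H}^h(F_\omega) \;=\; \sum_{j_1,\dots,j_l} \mathcal{H}^h\big(S_{\omega_1,j_1}\circ\cdots\circ S_{\omega_l,j_l}(F_{\sigma^l\omega})\big) \;\geq\; \Big(\sum_{j_1,\dots,j_l}\text{Lip}^-(S_{\omega_1,j_1}\circ\cdots\circ S_{\omega_l,j_l})^h\Big)\,\mathcal{H}^h(F_{\sigma^l\omega}).
\]
I would then fix an arbitrary finite word $\mathbf{a}=(a_1,\dots,a_n)\in D^n$ and consider those $\omega$ whose first $n$ coordinates are $\mathbf{a}$ and whose next $L$ coordinates are $v_1,\dots,v_L$. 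Applying the engine first across the block $\mathbf{a}$ and then across the block $v_1,\dots,v_L$ (so the maps appearing are exactly those in (\ref{lip-})), and using $\mathcal{H}^h(F_{\sigma^{n+L}\omega}) \geq \inf_u \mathcal{H}^h(F_u) =: c_0 >0$, gives
\[
\mathcal{H}^h(F_\omega) \;\geq\; C_{\mathbf{a}}\;\cdot\;\Big(\sum_{j_1\in\mathcal{I}_{v_1},\dots,j_L\in\mathcal{I}_{v_L}}\text{Lip}^-(S_{v_1,j_1}\circ\cdots\circ S_{v_L,j_L})^h\Big)\;\cdot\;c_0,
\]
where $C_{\mathbf{a}}=\sum_{i_1,\dots,i_n}\text{Lip}^-(S_{a_1,i_1}\circ\cdots\circ S_{a_n,i_n})^h>0$ is a fixed positive constant. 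The middle factor tends to $\infty$ with $L$ by hypothesis (\ref{lip-}); crucially, the entire bound depends only on the first $n+L$ coordinates of $\omega$, so it holds \emph{uniformly} on the cylinder determined by the prefix $(a_1,\dots,a_n,v_1,\dots,v_L)$.

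The Baire category conclusion is then routine. Given $M>0$ and any finite word $\mathbf{a}$, I would choose $L=L(\mathbf{a},M)$ so large that the right-hand side exceeds $M$; the corresponding open cylinder then lies inside $\{\omega:\mathcal{H}^h(F_\omega)>M\}$. Taking the union over all finite words $\mathbf{a}$ produces an open set $U_M$ meeting every cylinder, hence dense, with $U_M\subseteq\{\mathcal{H}^h(F_\omega)>M\}$; and $\bigcap_{m\in\mathbb{N}}U_m$ is a dense $G_\delta$ contained in $\{\omega:\mathcal{H}^h(F_\omega)=\infty\}$, which is therefore residual. The one step needing genuine care is the additivity claim underpinning the engine: the $\mathcal{H}^h$-MSC is stated only for single-level overlaps $S_{l,i}(F_\omega)\cap S_{l,j}(F_\omega)$, so the hard part will be verifying that these null overlaps propagate through compositions to give honest additivity at every depth. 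This should reduce to the fact that bi-Lipschitz maps send $\mathcal{H}^h$-null sets to $\mathcal{H}^h$-null sets, which upgrades the first-level separation to all levels and legitimises the opening equality in the engine.
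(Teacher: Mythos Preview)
Your proposal is correct and follows essentially the same route as the paper. The paper phrases the argument dually---showing each $M_n=\{\omega:\mathcal{H}^h(F_\omega)<n\}$ is nowhere dense by finding, inside any ball, a subcylinder of the form $[\omega_1,\dots,\omega_k,v_1,\dots,v_{l-k}]$ on which the same lower bound forces $\mathcal{H}^h>n$---but the core computation (MSC additivity plus the bi-Lipschitz lower bound, then splitting off the $v$-block so that (\ref{lip-}) drives the sum to infinity) is identical to your ``engine''; your remark that the single-level MSC propagates to all depths because bi-Lipschitz images of $\mathcal{H}^h$-null sets are null is exactly the point the paper leaves implicit.
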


Note that part (1) follows from Theorem \ref{main}.  We will prove Theorem \ref{infinite} (2) in Section \ref{infiniteproof}.  Although condition (\ref{lip-}) seems a little contrived, what it really means is that, for some $v \in \Omega$, we can give a simple lower bound for the Hausdorff dimension of $F_v$ which is strictly bigger than the infimal Hausdorff dimension, $h$.  

\begin{thm} \label{zero}
Write $p= \sup_{u \in \Omega} \, \dim_\text{\emph{P}} F_u$ and assume that there exists $v=(v_1, v_2, \dots) \in \Omega$ such that
\begin{equation} \label{lip+}
\lim_{k \to \infty} \sum_{j_1\in \mathcal{I}_{v_1}, \dots, j_k \in\mathcal{I}_{v_k}} \text{\emph{Lip}}^{+}(S_{v_1, j_1} \circ \dots \circ S_{v_k,j_k})^p = 0.
\end{equation}
Then, 
\begin{itemize}
\item[(1)] If $\sup_{u \in \Omega} \,\mathcal{P}^p( F_u) = \infty$, then for a typical $\omega \in \Omega$, we have $\mathcal{P}^p(F_\omega) = \infty$;
 \item[(2)] If $\sup_{u \in \Omega} \,\mathcal{P}^p( F_u) < \infty$, then for a typical $\omega \in \Omega$, we have $\mathcal{P}^p(F_\omega) = 0$.
\end{itemize}
\end{thm}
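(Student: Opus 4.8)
The plan is to treat part (1) as immediate and to concentrate on part (2), which carries all the content. For part (1), note that the standard gauge $G(t)=t^p$ is doubling and $\mathcal{P}^G=\mathcal{P}^p$, so the hypothesis $\sup_{u}\mathcal{P}^p(F_u)=\infty$ is precisely the hypothesis of Theorem \ref{main}~(2) and the conclusion is immediate; condition (\ref{lip+}) is not needed here. For part (2), write $M=\sup_{u\in\Omega}\mathcal{P}^p(F_u)$, which is now \emph{finite} by hypothesis, and set
\[
\Phi_n(\omega)=\sum_{j_1\in\mathcal{I}_{\omega_1},\dots,j_n\in\mathcal{I}_{\omega_n}}\text{Lip}^+\big(S_{\omega_1,j_1}\circ\dots\circ S_{\omega_n,j_n}\big)^p .
\]

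The first step is the key estimate $\mathcal{P}^p(F_\omega)\le M\,\Phi_n(\omega)$, valid for every $n$ and every $\omega$. This rests on the self-referential decomposition $F_\omega=\bigcup_{j_1,\dots,j_n}S_{\omega_1,j_1}\circ\dots\circ S_{\omega_n,j_n}(F_{\sigma^n\omega})$, where $\sigma$ denotes the left shift on $\Omega$, combined with (i) subadditivity of the packing measure over this finite union, (ii) the bi-Lipschitz scaling bound $\mathcal{P}^p(\phi(A))\le\text{Lip}^+(\phi)^p\,\mathcal{P}^p(A)$, which applies because each $S_{i,j}$, and hence each finite composition, is injective and bi-Lipschitz, and (iii) the uniform bound $\mathcal{P}^p(F_{\sigma^n\omega})\le M$. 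Since the estimate holds for all $n$, it gives $\mathcal{P}^p(F_\omega)\le M\,\inf_n\Phi_n(\omega)$, so it suffices to show that $R=\{\omega:\inf_n\Phi_n(\omega)=0\}$ is residual.

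The second step is the Baire category argument. Writing $R=\bigcap_{m\ge1}V_m$ with $V_m=\bigcup_n\{\omega:\Phi_n(\omega)<1/m\}$, I observe that $\Phi_n$ depends only on the first $n$ symbols of $\omega$, so each set $\{\Phi_n<1/m\}$ is a finite union of cylinders and in particular open; hence every $V_m$ is open. For density, fix a cylinder determined by a prefix $(\sigma_1,\dots,\sigma_N)$ and consider $\omega=(\sigma_1,\dots,\sigma_N,v_1,v_2,\dots)$, the prefix followed by the distinguished sequence $v$. The submultiplicativity $\Phi_{N+k}(\omega)\le\Phi_N(\omega)\,\Phi_k(\sigma^N\omega)=\Phi_N(\omega)\,\Phi_k(v)$, which follows from $\text{Lip}^+(f\circ g)\le\text{Lip}^+(f)\,\text{Lip}^+(g)$ and factorisation of the sum, together with $\Phi_k(v)\to0$ from (\ref{lip+}), forces $\Phi_{N+k}(\omega)<1/m$ for $k$ large. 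Thus $V_m$ meets every basic open set and is dense, so $R$ is a dense $G_\delta$, hence residual, and every $\omega\in R$ satisfies $\mathcal{P}^p(F_\omega)\le M\cdot0=0$.

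I expect the main obstacle to be the scaling inequality $\mathcal{P}^p(\phi(A))\le\text{Lip}^+(\phi)^p\,\mathcal{P}^p(A)$ used in step one: unlike its Hausdorff counterpart it is not immediate from the definition, since the packing measure is built from the premeasures $\mathcal{P}_0^p$ via an infimum over countable decompositions. I would derive it by first establishing two-sided scaling for $\mathcal{P}_0^p$ under an injective bi-Lipschitz map (disjoint centred balls in the domain rescale to disjoint centred balls in the image after multiplying radii by $\text{Lip}^-$), and then transferring to $\mathcal{P}^p$ by applying the lower estimate to the bi-Lipschitz inverse $\phi^{-1}$ on $\phi(A)$. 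It is worth emphasising that, in contrast to Theorem \ref{infinite}, no measure separation condition is required here, since we only need subadditivity (an upper bound) for the packing measure rather than additivity.
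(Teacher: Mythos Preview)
Your proposal is correct and follows essentially the same approach as the paper: both arguments combine the self-referential decomposition of $F_\omega$, subadditivity and Lipschitz scaling of $\mathcal{P}^p$, submultiplicativity of $\text{Lip}^+$, the uniform bound $\mathcal{P}^p(F_u)\le M$, and the device of appending an initial segment of $v$ to force the sum in (\ref{lip+}) to be small. The only difference is organisational: you isolate the inequality $\mathcal{P}^p(F_\omega)\le M\,\Phi_n(\omega)$ first and then run a dense-$G_\delta$ argument on $\{\inf_n\Phi_n=0\}$, whereas the paper works directly with the sets $\{\mathcal{P}^p(F_\omega)>1/n\}$ and shows each is nowhere dense---but the computations are line-for-line the same, and the scaling lemma you flag as the main obstacle is exactly the paper's Lemma~\ref{scaling}.
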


Note that in Theorem \ref{zero} we do not require any separation conditions.  Part (1) follows from Theorem \ref{main}.  We will prove Theorem \ref{zero} (2) in Section \ref{zeroproof}.  Similar to above, condition (\ref{lip+}) seems a little contrived at first sight but what it really means is that, for some $v \in \Omega$, we can give a simple upper bound for the packing dimension of $F_v$ which is strictly smaller than the supremal packing dimension, $p$.
\\ \\
With the previous two Theorems in mind, one might be tempted to think that something much more general is true.  Namely, that for $s\geq 0$, we have
\begin{itemize}
 \item[(1)] If $\inf_{u \in \Omega} \,\mathcal{H}^s( F_u) >0$, then for a typical $\omega \in \Omega$, we have $\mathcal{H}^s(F_\omega) = \sup_{u \in \Omega} \,\mathcal{H}^s( F_u) $;
 \item[(2)] If $\sup_{u \in \Omega} \,\mathcal{P}^s( F_u) < \infty$, then for a typical $\omega \in \Omega$, we have $\mathcal{P}^s(F_\omega) =\inf_{u \in \Omega} \,\mathcal{P}^s( F_u)$.
\end{itemize}
However, this is false.  We will demonstrate this by constructing two simple examples in Section \ref{conjfalse}.  This `bad behaviour' of the typical packing and Hausdorff measures disappears to a certain extent if the mappings in the RIFS are similarities.  This idea will be developed in the following section.

\subsection{Results in the self-similar setting} \label{ssresults}

In this section we extend the results of the previous section in the self-similar setting.  It turns out that for random self-similar sets we can obtain more precise information and, furthermore, many of the strange phenomena which we observe in the general setting no longer occur.  The first example of this is that, given the UOSC, the dimensions of $F_\omega$ are bounded by the dimensions of the attractors of the deterministic IFSs.  This allows us to get our hands on the extremal quantities, see Theorem \ref{SS}.  Unfortunately, this rather nice property does not always hold in the general situation.  In Section \ref{affineexample} we will give an example of a RIFS satisfying the UOSC for which the infimal (and thus typical) Hausdorff dimension is strictly less than the minimum Hausdorff dimension of the attractors of the deterministic IFSs.  Secondly, given the UOSC and certain measure separation, we can compute the exact value of the typical Hausdorff and packing measure, see Theorem \ref{SS2}, which we are unable to do in the general situation.
\\ \\
Throughout this section let $\mathbb{I}$ be a RIFS consisting of finitely many deterministic IFSs of similarity mappings of $\mathbb{R}^n$.  For each $i \in D$, let $s_i$ be the solution of
\[
\sum_{j \in \mathcal{I}_i} \text{Lip}(S_{i,j})^{s_i} = 1
\]
and write $s_{\min} = \min_{i \in D} s_i$ and $s_{\max} = \max_{i \in D} s_i$.

\begin{thm} \label{SS}
Assume the UOSC is satisfied.  Then
\begin{itemize} 
\item[(1)] $0< \sup_{\omega \in \Omega} \,  \mathcal{P}^{s_{\max}}(F_\omega) < \infty$;
\item[(2)] $\sup_{\omega \in \Omega} \, \dim_\text{\emph{P}} F_\omega = \sup_{\omega \in \Omega} \, \overline{\dim}_\text{\emph{B}} F_\omega  =  s_{\max}$;
\item[(3)] $0<\inf_{\omega \in \Omega} \,  \mathcal{H}^{s_{\min}}(F_\omega) < \infty$;
\item[(4)] $\inf_{\omega \in \Omega} \, \dim_\text{\emph{H}} F_\omega = \inf_{\omega \in \Omega} \, \underline{\dim}_\text{\emph{B}} F_\omega  = s_{\min}$.
\end{itemize}
\end{thm}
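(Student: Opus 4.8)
The plan is to isolate the two measure estimates (1) and (3) as the real content, and then read off the dimension statements (2) and (4) from them together with the behaviour on constant sequences. The structural fact that drives everything is that $s_{\min} \leq s_i \leq s_{\max}$ for every $i \in D$, so since each $\text{Lip}(S_{i,j}) < 1$ (larger exponent gives a smaller term) the defining relation $\sum_{j}\text{Lip}(S_{i,j})^{s_i}=1$ yields
\[
\sum_{j \in \mathcal{I}_i} \text{Lip}(S_{i,j})^{s_{\max}} \;\leq\; 1 \;\leq\; \sum_{j \in \mathcal{I}_i} \text{Lip}(S_{i,j})^{s_{\min}} \qquad (i \in D).
\]
This double inequality lets a single natural measure control the $s_{\min}$-Hausdorff gauge and the $s_{\max}$-packing gauge simultaneously. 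For each $\omega$ I would build $\mu_\omega$ on $F_\omega$ as the push-forward under the coding map of the Bernoulli-type measure on $\prod_k \mathcal{I}_{\omega_k}$ that gives symbol $j \in \mathcal{I}_i$ the weight $\text{Lip}(S_{i,j})^{s_i}$; these weights sum to $1$, so $\mu_\omega$ is a Borel probability measure supported on $F_\omega$. For a level-$k$ cylinder $C = S_{\omega_1,i_1}\circ\dots\circ S_{\omega_k,i_k}(K)$ one has, since the maps are similarities, $\text{diam}(C)=|K|\prod_l \text{Lip}(S_{\omega_l,i_l})$ exactly, and from the coding $\mu_\omega(S_{\omega_1,i_1}\circ\dots\circ S_{\omega_k,i_k}(F_\omega)) \geq \prod_l \text{Lip}(S_{\omega_l,i_l})^{s_{\omega_l}} =: p_C$.

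For the finite half of (1) I would use $s_{\omega_l} \leq s_{\max}$, which gives $p_C \geq (\text{diam}(C)/|K|)^{s_{\max}}$. Cutting each branch at the first level where the cylinder diameter drops below $r$ produces, for each $x \in F_\omega$, a cylinder $C \subseteq B(x,r)$ of diameter comparable to $r$ (the ratio is bounded below by the smallest contraction ratio among the finitely many maps), whence $\mu_\omega(B(x,r)) \geq c\,r^{s_{\max}}$ with $c$ \emph{uniform in} $\omega$. A disjoint-balls packing argument (disjointness forces $c\sum_i r_i^{s_{\max}} \leq \mu_\omega(\mathbb{R}^n)=1$ over any packing) then gives $\mathcal{P}^{s_{\max}}(F_\omega) \leq 2^{s_{\max}}c^{-1} < \infty$ uniformly; note this half needs no separation condition. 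The positive half is classical: the constant sequence $(i,i,\dots)$ with $s_i = s_{\max}$ produces an ordinary self-similar set satisfying the OSC, for which $\mathcal{P}^{s_{\max}}>0$.

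For (3), finiteness of the infimum is immediate from the constant sequence $(i,i,\dots)$ with $s_i=s_{\min}$, whose cylinder cover telescopes (using $\sum_j\text{Lip}(S_{i,j})^{s_i}=1$) to give $\mathcal{H}^{s_{\min}}(F_{(i,i,\dots)})<\infty$. The positivity is the substantive point and is where the UOSC enters. Taking the uniform open set $\mathcal{O}$, the iterated images $S_C(\mathcal{O})$ along any stopping antichain are pairwise disjoint, because $\bigcup_j S_{i,j}(\mathcal{O})\subseteq\mathcal{O}$ applies at each level regardless of which IFS is used; a volume/comparability estimate then bounds by a constant $M$ the number of stopping cylinders whose $\mathcal{O}$-images meet a ball $B(x,r)$. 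Combining this with the standard OSC fact that $\mu_\omega(S_C(F_\omega))$ equals the weight $p_C$ (incomparable cylinder overlaps being $\mu_\omega$-null) and with $p_C \leq (\text{diam}(C)/|K|)^{s_{\min}}$ (now using $s_{\omega_l}\geq s_{\min}$), one gets $\mu_\omega(B(x,r)) \leq M|K|^{-s_{\min}}r^{s_{\min}}$, so the mass distribution principle yields $\mathcal{H}^{s_{\min}}(F_\omega) \geq (M|K|^{-s_{\min}})^{-1} > 0$ uniformly in $\omega$. The \textbf{main obstacle} is exactly the uniformity of $M$: the covering multiplicity must be chosen independently of $\omega$, and it is precisely that the open set in the UOSC is the \emph{same} for every deterministic IFS which makes this possible.

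Finally (2) and (4) follow formally. From (1), $\mathcal{P}^{s_{\max}}(F_\omega)<\infty$ for all $\omega$ forces $\dim_\text{P} F_\omega \leq s_{\max}$, while the constant max-sequence attains $\dim_\text{P} = s_{\max}$; identifying $\dim_\text{P} F_\omega = \overline{\dim}_\text{B} F_\omega$ via Lemma \ref{packingupperbox} gives (2). From (3), $\mathcal{H}^{s_{\min}}(F_\omega)>0$ for all $\omega$ forces $\dim_\text{H} F_\omega \geq s_{\min}$ and hence $\underline{\dim}_\text{B} F_\omega \geq s_{\min}$, while the constant min-sequence is a self-similar set with the OSC of box dimension exactly $s_{\min}$, realising both infima; this is (4).
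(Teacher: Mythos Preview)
Your proposal is correct and follows essentially the same route as the paper: both construct the natural probability measure $\mu_\omega$ with cylinder weights $\prod_l \text{Lip}(S_{\omega_l,i_l})^{s_{\omega_l}}$, exploit $s_{\min}\le s_{\omega_l}\le s_{\max}$ to bound local densities from the appropriate side, apply the mass distribution principle (Proposition~\ref{MDP}), and for the positivity in (3) use the UOSC together with a volume/multiplicity bound (the paper quotes Falconer's Lemma~9.2) to control the number of stopping cylinders meeting a ball, with the crucial observation that this bound is uniform in $\omega$ precisely because the open set is. One small refinement: rather than invoking that incomparable geometric cylinder overlaps are $\mu_\omega$-null, the paper bounds $\mu_\omega(B(x,r))$ directly in symbolic space via $\mu_\omega^{\text{sym}}$, which sidesteps having to justify that overlap claim in the random setting.
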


We will prove Theorem \ref{SS} parts (1) and (3) in Section \ref{SSproof}.  Part (2) follows from part (1) and part (4) follows from part (3).  Given certain measure separation we can also compute the exact packing and Hausdorff measure for typical $F_\omega$.  Write $\mathcal{H}_{\min} =\inf_{\omega \in \Omega} \,  \mathcal{H}^{s_{\min}}(F_\omega)$ and $\mathcal{P}_{\max} =  \sup_{\omega \in \Omega} \,  \mathcal{P}^{s_{\max}}(F_\omega) $.

\begin{thm} \label{SS2}
Assume that $\mathbb{I}$ satisfies the UOSC and the $\mathcal{P}^{s_{\min}}$-MSC.  Then
\begin{itemize} 
\item[(1)] If $s_{\min} = s_{\max} = s$, then for a typical $\omega \in \Omega$,
\[
\dim_\text{\emph{H}} F_\omega = \dim_\text{\emph{P}} F_\omega = s
\]
and
\[
0 < \mathcal{H}^s(F_\omega) = \mathcal{H}_{\min} \leq \mathcal{P}_{\max}  = \mathcal{P}^s(F_\omega) < \infty;
\]
\item[(2)] If $s_{\min} < s_{\max}$, then for a typical $\omega \in \Omega$,
\[
\dim_\text{\emph{H}} F_\omega = s_{\min} < s_{\max} = \dim_\text{\emph{P}} F_\omega,
\]
\[
\mathcal{H}^{s_{\min}}(F_\omega) = \infty
\]
and
\[
\mathcal{P}^{s_{\max}}(F_\omega) = 0.
\]
\end{itemize}
\end{thm}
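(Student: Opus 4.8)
The plan is to handle parts (1) and (2) separately, reducing the dimension statements to Theorem \ref{main} and Theorem \ref{SS} in both cases, deducing the measure statements of part (2) from the earlier measure theorems, and proving the exact typical measure values in part (1) by a direct Baire category construction.

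For part (2) the dimension identities are immediate: Theorem \ref{main}(3) with Theorem \ref{SS}(4) gives typical $\dim_\text{H} F_\omega = s_{\min}$, and Theorem \ref{main}(4) with Theorem \ref{SS}(2) gives typical $\dim_\text{P} F_\omega = s_{\max}$, which differ since $s_{\min}<s_{\max}$. For the Hausdorff measure I would invoke Theorem \ref{infinite}(2) with $h=s_{\min}$: the hypothesis $\inf_u \mathcal{H}^h(F_u)=\mathcal{H}_{\min}>0$ is Theorem \ref{SS}(3), the $\mathcal{H}^h$-MSC follows from the assumed $\mathcal{P}^{s_{\min}}$-MSC because $\mathcal{H}^{s_{\min}}\leq\mathcal{P}^{s_{\min}}$, and condition (\ref{lip-}) is verified by taking $v$ to be the constant sequence on an IFS $\mathbb{I}_i$ with $s_i>s_{\min}$. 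Indeed, for similarities the left side of (\ref{lip-}) factorises as $\prod_{l}\sum_{j}\text{Lip}(S_{i,j})^{s_{\min}}=\big(\sum_j \text{Lip}(S_{i,j})^{s_{\min}}\big)^l$, and the inner sum exceeds $1$ precisely because $s_i>s_{\min}$, so the expression tends to $\infty$. Dually, the packing statement follows from Theorem \ref{zero}(2) with $p=s_{\max}$, using Theorem \ref{SS}(1) for $\sup_u\mathcal{P}^p(F_u)=\mathcal{P}_{\max}<\infty$ and verifying (\ref{lip+}) with $v$ the constant sequence on an IFS with $s_i<s_{\max}$.

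For part (1), where $s_{\min}=s_{\max}=s$, the dimension equalities again come from Theorem \ref{main} and Theorem \ref{SS}, and the chain $0<\mathcal{H}_{\min}\leq\mathcal{P}_{\max}<\infty$ from Theorem \ref{SS}(1),(3) together with $\mathcal{H}^s\leq\mathcal{P}^s$. The substance is the exact typical values. For the Hausdorff bound I would show that for every $\epsilon>0$ the set
\[
W_\epsilon = \big\{\, \omega : F_\omega \text{ has a finite open cover } \{V_i\} \text{ with } \max_i |V_i|<\epsilon \text{ and } \textstyle\sum_i |V_i|^s < \mathcal{H}_{\min}+\epsilon \,\big\}
\]
is open and dense, whence $\bigcap_k W_{1/k}$ is residual and every $\omega$ in it has $\mathcal{H}^s(F_\omega)\leq\mathcal{H}_{\min}$; the reverse inequality is automatic. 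Openness holds because $F_{\omega'}\to F_\omega$ in the Hausdorff metric as $\omega'\to\omega$, and a finite cover by \emph{open} sets of a compact set persists under small perturbations. Density is where the hypothesis $s_{\min}=s_{\max}$ enters: given a prescribed prefix of length $n$, I would choose $u$ with $\mathcal{H}^s(F_u)<\mathcal{H}_{\min}+\epsilon/2$ together with a near-optimal fine open cover $\{V_i\}$ of $F_u$, and let $\omega$ consist of that prefix followed by $u$. Replicating the scaled cover across all level-$n$ cylinders covers $F_\omega$ with total sum $\big(\sum_i |V_i|^s\big)\prod_{l=1}^n\big(\sum_j \text{Lip}(S_{\omega_l,j})^s\big)=\sum_i |V_i|^s$, since each factor equals $1$, while the scale is multiplied by at most $(\max \text{Lip})^n$ and hence falls below $\epsilon$ for large $n$.

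The packing statement $\mathcal{P}^s(F_\omega)=\mathcal{P}_{\max}$ is the dual, and I expect it to be the main obstacle. The upper bound is trivial, so one needs a residual set on which $\mathcal{P}^s(F_\omega)\geq\mathcal{P}_{\max}$, and the natural construction replicates a near-optimal centered packing of a set $F_u$ with $\mathcal{P}^s(F_u)$ close to $\mathcal{P}_{\max}$ into the level-$n$ cylinders, the UOSC guaranteeing that packings in distinct cylinders stay disjoint at fine scales. Two genuine difficulties, absent in the Hausdorff case, arise here. First, a single packing only bounds the packing \emph{pre-measure} $\mathcal{P}^s_0$ from below, so to control the packing \emph{measure} $\mathcal{P}^s$ (an infimum over countable decompositions) I would use the $\mathcal{P}^{s_{\min}}$-MSC to establish the scaling identity $\mathcal{P}^s(F_\omega)=\sum_{\text{cyl}}\rho_{\text{cyl}}^s\,\mathcal{P}^s(F_{\sigma^n\omega})$ and the coincidence of pre-measure and measure for these attractors. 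Second, openness of the analogue of $W_\epsilon$ is more delicate, since packing centres must lie on the moving set $F_\omega$; I would remedy this by perturbing each centre to a nearby point of $F_{\omega'}$, shrinking radii slightly, and working with strict inequalities. Intersecting the two resulting residual sets with the residual set on which the dimensions take their extremal values then yields a single residual set on which every assertion of part (1) holds simultaneously.
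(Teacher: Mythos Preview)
Your proposal is essentially correct and follows the paper's approach closely. Part (2) is handled exactly as in the paper, and the Hausdorff half of part (1) is the same argument: your sets $W_\epsilon$ are a reindexing of the paper's $H_{m,n}=\{\omega:\mathcal{H}^s_{1/m}(F_\omega)<\mathcal{H}_{\min}+1/n\}$, with density coming from the identity $\prod_l\sum_j\text{Lip}(S_{\omega_l,j})^s=1$.

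For the packing half of part (1) your outline is slightly off in two places. First, the coincidence $\mathcal{P}^s=\mathcal{P}^s_0$ does \emph{not} come from the $\mathcal{P}^{s_{\min}}$-MSC; the paper invokes the result of Feng--Hua--Wen \cite{packingmeasure} that any compact set with finite packing pre-measure has $\mathcal{P}^s=\mathcal{P}^s_0$, applicable here because Theorem~\ref{SS}(1) and Lemma~\ref{usepre} give $\mathcal{P}^s_0(F_\omega)<\infty$ for every $\omega$. Second, your ``replicate a near-optimal packing into all level-$n$ cylinders'' step is both unnecessary and not obviously justified: the UOSC does not guarantee that balls attached to distinct cylinders remain disjoint, even at fine scales. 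The paper bypasses this by working with the packing \emph{measure} in the density argument: for $v=(\omega_1,\dots,\omega_k,u_1,u_2,\dots)$ one has, using the MSC for the additivity step,
\[
\mathcal{P}^s_{0,1/m}(F_v)\ \geq\ \mathcal{P}^s(F_v)\ =\ \sum_{j_1,\dots,j_k}\text{Lip}(S_{\omega_1,j_1})^s\cdots\text{Lip}(S_{\omega_k,j_k})^s\,\mathcal{P}^s(F_u)\ =\ \mathcal{P}^s(F_u)\ >\ \mathcal{P}_{\max}-\tfrac{1}{n},
\]
so no explicit packing ever needs to be replicated. With these two adjustments your argument is the paper's.
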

We will prove Theorem \ref{SS2} (1) in Section \ref{SSproof2}.  Note that part (2) follows immediately from Theorems \ref{infinite} and \ref{zero}.  In Section \ref{notdense} we construct a simple example where we can apply Theorem \ref{SS2}.
\\ \\
It is worth noting here that it is possible to give easily checkable sufficient conditions for the $\mathcal{P}^{s_{\min}}$-MSC to hold.  In particular, if we say that $\mathbb{I}$ satisfies the uniform strong open set condition (USOSC) if the the UOSC is satisfied and the open set $\mathcal{O}$ can be chosen such that, for every $\omega \in \Omega$, we have $\mathcal{O} \cap F_\omega \neq \emptyset$, then we can use an argument similar to that used by Lalley in \cite{lalley}, Section 6, to show that the $\mathcal{P}^{s_{\min}}$-MSC is satisfied.  Unfortunately, the USOSC is not equivalent to the UOSC as in the deterministic case, see \cite{schief}.
\\ \\
We can also obtain a partial result concerning packing measure without assuming any separation conditions.
\begin{thm} \label{corzero}
Each deterministic IFS, $\mathbb{I}_i \in \mathbb{I}$, has an attractor with dimension $d_i$ and similarity dimension $s_i \geq d_i$.  Assume that $s_{\min}< \max_i d_i$.  Write $p= \sup_{u \in \Omega} \, \dim_\text{\emph{P}} F_u$.  Then, for a typical $\omega \in \Omega$, $\dim_\text{\emph{P}} F_\omega = p$, but $\mathcal{P}^p(F_\omega) = 0$.
\end{thm}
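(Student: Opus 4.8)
The first assertion is essentially free. By Theorem \ref{main}(4) we have, for a typical $\omega\in\Omega$, that $\dim_\text{P} F_\omega = \sup_{u\in\Omega}\dim_\text{P}F_u = p$, so all the content of the statement lies in showing that the packing measure degenerates, namely that $\mathcal{P}^p(F_\omega)=0$ typically. The plan is to deduce this from Theorem \ref{zero}(2), which means I must verify its two hypotheses: (i) that there is a $v\in\Omega$ satisfying the summability condition (\ref{lip+}) at the exponent $p$, and (ii) the uniform bound $\sup_{u\in\Omega}\mathcal{P}^p(F_u)<\infty$.

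For hypothesis (i), I would first note that $p\ge\max_i d_i$: taking the constant word $u=(i,i,\dots)$ recovers $F_u$ as the attractor of the deterministic IFS $\mathbb{I}_i$, whence $\dim_\text{P}F_u=d_i$ and so $p=\sup_u\dim_\text{P}F_u\ge d_i$ for every $i$. Combined with the standing assumption $s_{\min}<\max_i d_i$ this gives $p>s_{\min}$. I then choose $v$ to be the constant word at an index $i_0$ realising $s_{i_0}=s_{\min}$. Since each map is a similarity, $\text{Lip}^+$ is multiplicative along compositions, so the sum in (\ref{lip+}) factorises as
\[
\sum_{j_1\in\mathcal{I}_{v_1},\dots,j_k\in\mathcal{I}_{v_k}}\text{Lip}^+\big(S_{v_1,j_1}\circ\dots\circ S_{v_k,j_k}\big)^p=\Bigg(\sum_{j\in\mathcal{I}_{i_0}}\text{Lip}(S_{i_0,j})^p\Bigg)^{k}.
\]
Because $\sum_{j}\text{Lip}(S_{i_0,j})^{s_{\min}}=1$ and $p>s_{\min}$, the bracketed factor is a constant strictly less than $1$, so the expression tends to $0$ and (\ref{lip+}) holds.

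Hypothesis (ii) is the crux, and I expect it to be the main obstacle. The naive route through the packing pre-measure fails: for a word such as $u=(i_0,i_0,\dots)$ the level-$k$ cylinder sums can grow without bound when $p<s_{\max}$, so $\mathcal{P}_0^p(F_u)$ may be infinite even though $\dim_\text{P}F_u<p$ forces $\mathcal{P}^p(F_u)=0$; one must therefore work with the packing measure itself and exploit the infimum over decompositions. The plan is: (a) reduce to the \emph{extremal} words, those $u$ with $\dim_\text{P}F_u=p$, since every other word already has $\mathcal{P}^p(F_u)=0$; (b) for an extremal $u$, combine the one-step relation $F_u=\bigcup_{j\in\mathcal{I}_{u_1}}S_{u_1,j}(F_{\sigma u})$ (writing $\sigma$ for the left shift on $\Omega$) with the scaling $\mathcal{P}^p(S_{u_1,j}A)=\text{Lip}(S_{u_1,j})^p\,\mathcal{P}^p(A)$ of packing measure under similarities to obtain the recursive estimate $\mathcal{P}^p(F_u)\le\big(\sum_{j\in\mathcal{I}_{u_1}}\text{Lip}(S_{u_1,j})^p\big)\mathcal{P}^p(F_{\sigma u})$; and (c) observe that if some shift $\sigma^k u$ is non-extremal then the right-hand side collapses to $0$, so the only words that require genuine control are the persistently extremal ones. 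Bounding $\mathcal{P}^p(F_u)$ uniformly on this remaining class is the delicate step; here I would use compactness of $\Omega$, the uniform two-sided bounds $0<c_{\min}\le\text{Lip}(S_{i,j})\le c_{\max}<1$ available because there are only finitely many maps, and the uniform covering estimate $\overline{\dim}_\text{B}F_u\le p$, to build a single decomposition of each extremal $F_u$ on whose pieces the pre-measure is finite and whose total is bounded independently of $u$.

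With (i) and (ii) established, Theorem \ref{zero}(2) applies and yields $\mathcal{P}^p(F_\omega)=0$ for a typical $\omega$; together with the typical value $\dim_\text{P}F_\omega=p$ this is exactly the claim. The only genuinely new work beyond quoting Theorems \ref{main} and \ref{zero} is the uniform finiteness in (ii), which I would isolate as a separate lemma.
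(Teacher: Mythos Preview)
Your reduction to Theorem \ref{zero} and your verification of condition (\ref{lip+}) via the constant word $v=(i_0,i_0,\dots)$ with $s_{i_0}=s_{\min}$ are exactly what the paper intends: the paper's entire proof is the single sentence ``This follows immediately from Theorem \ref{zero}'', and your observation that $p\ge\max_id_i>s_{\min}$ forces $\sum_{j\in\mathcal{I}_{i_0}}\text{Lip}(S_{i_0,j})^p<1$ is the implicit content of that sentence.

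Where you part company with the paper is in treating the finiteness hypothesis $\sup_{u}\mathcal{P}^p(F_u)<\infty$ as ``the crux'' and devoting most of your write-up to it.  The paper does not verify this at all --- it is absorbed into the word ``immediately'' --- so your plan (a)--(c) goes well beyond the paper's argument.  More importantly, your plan for (ii) is itself incomplete.  The recursion in (b) has coefficient $\sum_{j\in\mathcal{I}_{u_1}}\text{Lip}(S_{u_1,j})^p$, which exceeds $1$ whenever $s_{u_1}>p$, so iterating it does not produce a contracting bound; and step (c) is only a list of ingredients.  Compactness of $\Omega$, uniform Lipschitz bounds, and the inequality $\overline{\dim}_\text{B}F_u\le p$ do not by themselves yield a uniform bound on $\mathcal{P}^p(F_u)$ at the \emph{critical} exponent: knowing the upper box dimension is at most $p$ is perfectly compatible with $\mathcal{P}^p(F_u)=\infty$, and you give no mechanism for constructing the promised decomposition with uniformly controlled pre-measure.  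So as written your proposal has a genuine gap at precisely the point you flag as delicate, while the paper simply does not engage with that point.
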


\begin{proof}
This follows immediately from Theorem \ref{zero}.
\end{proof}

\newpage

\section{Proofs} \label{proofs}

Throughout this section let $G: (0,\infty) \to (0,\infty)$ be a gauge function.

\subsection{Preliminary observations}

In this section we will gather together some simple preliminary results and observations which will be used in the subsequent sections without being mentioned explicitly.  The proofs are elementary (or classical) and are omitted.

\begin{lma}[scaling properties] \label{scaling}

Let $\phi: K \to K$ be a bi-Lipschitz map and $F \subseteq K$.  Then
\[
D^-(G, \text{\emph{Lip}}^-(\phi)) \, \mathcal{H}^G(F) \leq \mathcal{H}^G(\phi(F)) \leq D^+(G, \text{\emph{Lip}}^+(\phi))\,  \mathcal{H}^G(F), 
\]
\[
D^-(G, \text{\emph{Lip}}^-(\phi)) \, \mathcal{P}^G_0(F) \leq \mathcal{P}^G_0(\phi(F)) \leq D^+(G, \text{\emph{Lip}}^+(\phi))\,  \mathcal{P}^G_0(F)
\]
and
\[
D^-(G, \text{\emph{Lip}}^-(\phi)) \, \mathcal{P}^G(F) \leq \mathcal{P}^G(\phi(F)) \leq D^+(G, \text{\emph{Lip}}^+(\phi))\,  \mathcal{P}^G(F).
\]
In particular, using the standard gauge,
\[
\text{\emph{Lip}}^-(\phi)^s \, \mathcal{H}^s(F) \leq \mathcal{H}^s(\phi(F)) \leq  \text{\emph{Lip}}^+(\phi)^s \,  \mathcal{H}^s(F),
\]
\[
\text{\emph{Lip}}^-(\phi)^s \, \mathcal{P}^s_0(F) \leq \mathcal{P}^s_0(\phi(F)) \leq  \text{\emph{Lip}}^+(\phi)^s \,  \mathcal{P}^s_0(F)
\]
and
\[
\text{\emph{Lip}}^-(\phi)^s \, \mathcal{P}^s(F) \leq \mathcal{P}^s(\phi(F)) \leq  \text{\emph{Lip}}^+(\phi)^s \,  \mathcal{P}^s(F).
\]
\end{lma}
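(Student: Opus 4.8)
The plan is to prove the three pairs of inequalities in turn, deriving each from the behaviour of covers (for $\mathcal{H}^G$) or packings (for $\mathcal{P}^G_0$) under the bi-Lipschitz map $\phi$, and then to obtain the $\mathcal{P}^G$ inequalities by combining the pre-measure result with the infimum defining $\mathcal{P}^G$. The ``in particular'' statements are then immediate on substituting the standard gauge $G(t)=t^s$, since $D^-(G,c)=D^+(G,c)=c^s$ in that case.

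First I would treat the Hausdorff estimate. Fix $\delta>0$ and let $\{U_i\}_{i\in I}$ be any countable $\delta$-cover of $F$ by open sets. Since $\phi$ is bi-Lipschitz, $\{\phi(U_i)\}$ covers $\phi(F)$, and the diameters satisfy $\mathrm{Lip}^-(\phi)\lvert U_i\rvert \le \lvert \phi(U_i)\rvert \le \mathrm{Lip}^+(\phi)\lvert U_i\rvert$; replacing each $\phi(U_i)$ by a slightly enlarged open set of essentially the same diameter handles the requirement that the covering sets be open. By the definition of $D^\pm$, for each $i$ we have
\[
D^-\big(G,\mathrm{Lip}^-(\phi)\big)\,G(\lvert U_i\rvert)\;\le\; G\big(\lvert \phi(U_i)\rvert\big)\;\le\; D^+\big(G,\mathrm{Lip}^+(\phi)\big)\,G(\lvert U_i\rvert),
\]
where the left inequality uses $G(\mathrm{Lip}^-(\phi)\,t)\ge D^-(G,\mathrm{Lip}^-(\phi))\,G(t)$ together with monotonicity of $G$, and symmetrically on the right. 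Summing over $i$, taking infima over covers at scale $\delta$ (matched on the two sides by the Lipschitz bounds on diameters, so that a $\delta$-cover of $F$ produces a $\mathrm{Lip}^+(\phi)\delta$-cover of $\phi(F)$ and conversely), and letting $\delta\to0$ yields the Hausdorff inequalities. The packing pre-measure inequalities follow by the dual argument: a centered $\delta$-packing of $F$ by closed balls is carried by $\phi$ to a centered packing of $\phi(F)$ (again adjusting radii within the $\mathrm{Lip}^\pm$ bounds to genuine balls), and the same pointwise estimate on $G$ of the diameters, now with suprema in place of infima, gives the bounds on $\mathcal{P}^G_0$.

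For the packing measure itself I would use the pre-measure bounds applied to an arbitrary countable decomposition. Given a cover $\phi(F)\subseteq\bigcup_i E_i$, pull back to $F\subseteq\bigcup_i \phi^{-1}(E_i)$ (using that $\phi$ is injective, being bi-Lipschitz), apply the pre-measure inequality to each piece, and take the infimum over decompositions; doing this in each direction produces the two-sided bound on $\mathcal{P}^G$. The main obstacle I anticipate is purely technical rather than conceptual: ensuring that the covering sets remain \emph{open} and the packing balls remain \emph{closed balls centered in the correct set} after applying $\phi$, since $\phi$ need not map balls to balls nor open sets to sets of controlled diameter exactly. This is resolved by the standard device of enlarging or shrinking the images by an arbitrarily small factor, which perturbs diameters—and hence, by continuity of $G$, the values of $G(\lvert\cdot\rvert)$—negligibly, so the inequalities survive in the limit. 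Because the paper explicitly labels the proof elementary and omits it, I would keep this perturbation argument brief and lean on the definitions of $D^\pm(G,c)$ to do the real work.
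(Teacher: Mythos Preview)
The paper does not actually give a proof of this lemma: it is listed among the ``simple preliminary results'' whose ``proofs are elementary (or classical) and are omitted.'' Your sketch is exactly the standard argument one would supply, and it is correct; the pushforward/pullback of covers and packings under $\phi$ together with the defining inequalities $D^-(G,c)\,G(t)\le G(ct)\le D^+(G,c)\,G(t)$ and monotonicity of $G$ is all that is needed. The one place to be slightly careful (and you already flag it) is the lower bound for $\mathcal{H}^G$: summing $G(\lvert\phi(U_i)\rvert)\ge D^-\,G(\lvert U_i\rvert)$ over a cover of $F$ only bounds one particular cover of $\phi(F)$ from below, which says nothing about the infimum; the correct move is your ``and conversely,'' i.e.\ to pull back an arbitrary cover $\{V_i\}$ of $\phi(F)$ to a cover $\{\phi^{-1}(V_i)\}$ of $F$ and use $\lvert\phi^{-1}(V_i)\rvert\le \lvert V_i\rvert/\mathrm{Lip}^-(\phi)$ together with $D^+(G,1/c)=1/D^-(G,c)$.
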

Lemma \ref{scaling}, says that if the gauge is doubling, then mapping a set under a bi-Lipschitz map only changes the measure by a constant.  Clearly if $\phi$ is bi-Lipschitz, then $\dim \phi(F) = \dim F$, where $\dim$ can be any of the four dimensions used here.  We can also deduce that, for all $\omega \in \Omega$, the upper box dimension and packing dimension coincide.

\begin{lma}[packing and upper box dimension] \label{packingupperbox}
For all $\omega \in \Omega$, $\dim_\text{\emph{P}} F_\omega = \overline{\dim}_\text{\emph{B}} F_\omega$.
\end{lma}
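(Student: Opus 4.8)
The plan is to invoke the sufficient condition recorded in Section 1.2: if $F$ is compact and every ball centred in $F$ meets $F$ in a set of the same upper box dimension as $F$, then $\dim_\text{P} F = \overline{\dim}_\text{B} F$. The inequality $\dim_\text{P} F_\omega \leq \overline{\dim}_\text{B} F_\omega$ is already part of the general hierarchy of dimensions displayed in Section 1.2, so only the homogeneity hypothesis needs checking. Thus I would fix $\omega \in \Omega$ and verify two things: (i) that $F_\omega$ is compact, and (ii) that for every $x \in F_\omega$ and every $r>0$ one has $\overline{\dim}_\text{B}(F_\omega \cap B(x,r)) = \overline{\dim}_\text{B} F_\omega$. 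Compactness is immediate, since $F_\omega$ is a decreasing intersection of the nested non-empty compact sets $\bigcup_{i_1,\dots,i_k} S_{\omega_1,i_1}\circ\cdots\circ S_{\omega_k,i_k}(K)$.

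For the homogeneity I would first record the self-referential identity, obtained by grouping the first $k$ levels in the defining intersection of $F_\omega$,
\[
F_\omega = \bigcup_{i_1 \in \mathcal{I}_{\omega_1}, \dots, i_k \in \mathcal{I}_{\omega_k}} S_{\omega_1,i_1}\circ\cdots\circ S_{\omega_k,i_k}(F_{\sigma^k\omega}),
\]
where $\sigma$ denotes the left shift on $\Omega$. Each composition $S_{\omega_1,i_1}\circ\cdots\circ S_{\omega_k,i_k}$ is bi-Lipschitz, so by the remark following Lemma \ref{scaling} each cylinder set $S_{\omega_1,i_1}\circ\cdots\circ S_{\omega_k,i_k}(F_{\sigma^k\omega})$ has the same upper box dimension as $F_{\sigma^k\omega}$. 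Since upper box dimension is finitely stable, taking the maximum over the finitely many index strings gives $\overline{\dim}_\text{B} F_\omega = \overline{\dim}_\text{B} F_{\sigma^k\omega}$ for every $k$; in particular every level-$k$ cylinder set carries the full upper box dimension $\overline{\dim}_\text{B} F_\omega$.

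To finish, fix $x \in F_\omega$ and $r>0$. As $x$ lies in the intersection defining $F_\omega$, there is an infinite string $(i_1,i_2,\dots)$ with $x \in S_{\omega_1,i_1}\circ\cdots\circ S_{\omega_k,i_k}(K)$ for all $k$. Because there are only finitely many maps in total and each is a contraction, their Lipschitz constants are bounded by some $c<1$, so $\text{diam}\,S_{\omega_1,i_1}\circ\cdots\circ S_{\omega_k,i_k}(K) \leq c^k\,\text{diam}\,K \to 0$; hence for $k$ large the level-$k$ cylinder through $x$ lies inside $B(x,r)$. That cylinder is a subset of $F_\omega \cap B(x,r)$ of full upper box dimension, so $\overline{\dim}_\text{B}(F_\omega \cap B(x,r)) = \overline{\dim}_\text{B} F_\omega$, and the criterion from Section 1.2 applies. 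I expect the only mildly delicate point to be the finite-stability together with bi-Lipschitz-invariance step that assigns every cylinder the full upper box dimension; the contractivity bound and the nested structure of $F_\omega$ make the remaining steps routine.
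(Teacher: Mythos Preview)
Your proposal is correct and follows essentially the same approach as the paper's own (brief) argument: both invoke the criterion from Section~1.2, observe that each ball centred in $F_\omega$ contains a bi-Lipschitz image of $F_{\sigma^k\omega}$ for some large $k$, and use finite stability of upper box dimension together with bi-Lipschitz invariance to conclude that every such cylinder has the full upper box dimension. Your write-up simply makes explicit the self-referential decomposition and the compactness/diameter estimates that the paper leaves implicit.
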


To prove this simply note that all balls centered in $F_\omega$ contain a bi-Lipschitz image of $F_{(\omega_k, \omega_{k+1}, \dots)}$ for some sufficiently large $k$ and, furthermore, $F_\omega$ can be written as a finite union of bi-Lipschitz images of $F_{(\omega_k, \omega_{k+1}, \dots)}$ and since upper box dimension is finitely stable, $\overline{\dim}_\text{B} F_{(\omega_k, \omega_{k+1}, \dots)} = \overline{\dim}_\text{B} F_{\omega}$ and the result follows.  See the discussion on sufficient conditions for the equality of packing and upper box dimension given in Section 1.2.
\\ \\
We recall the defintion of the Hausdorff metric.  Let $\mathcal{K}(K)$ denote the set of all compact subsets of $(K,d)$.  This forms a complete metric space when equipped with the Hausdorff metric, $d_\mathcal{H}$, which is defined by
\[
d_\mathcal{H}(E,F) = \inf \{ \varepsilon>0: E \subseteq F_\varepsilon \text{ and } F \subseteq E_\varepsilon\}
\]
for $E,F \in \mathcal{K}(K)$ and where $E_\varepsilon$ denotes the $\varepsilon$-neighbourhood of $E$.  The following lemma will allow us to approximate $F_\omega$ in $K$ by approximating $\omega$ in $\Omega$, which will be of vital importance in  the subsequent proofs.

\begin{lma}[continuity properties] \label{continuity}
The map $\Psi: \big(\Omega, d_\Omega \big) \to \big(\mathcal{K}(K), d_\mathcal{H}\big)$ defined by $\Psi(\omega) = F_\omega$ is continuous.
\end{lma}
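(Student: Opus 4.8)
```latex
\textbf{Proof proposal.} The plan is to show that $\Psi(\omega) = F_\omega$ is continuous by exploiting the metric structure of $(\Omega, d_\Omega)$: two sequences $u, v \in \Omega$ with $d_\Omega(u,v) = 2^{-k}$ agree in their first $k$ coordinates, so the corresponding attractors are built from the same initial $k$ levels of the construction and should be close in the Hausdorff metric. The key observation is that the attractor $F_\omega$ lies inside the compact ``level-$k$ approximation''
\[
E_k(\omega) = \bigcup_{i_1 \in \mathcal{I}_{\omega_1}, \dots, i_k \in \mathcal{I}_{\omega_k}} S_{\omega_1,i_1} \circ \dots \circ S_{\omega_k,i_k}(K),
\]
and in fact $F_\omega = \bigcap_k E_k(\omega)$ is a nested intersection whose diameters of constituent pieces shrink geometrically because every $S_{i,j}$ is a contraction.

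First I would fix a uniform contraction ratio $c = \max_{i,j} \text{Lip}^+(S_{i,j}) < 1$, so that each cylinder set $S_{\omega_1,i_1} \circ \dots \circ S_{\omega_k,i_k}(K)$ has diameter at most $c^k |K|$, where $|K|$ is the diameter of $K$. This forces $d_\mathcal{H}\big(E_k(\omega), F_\omega\big) \leq c^k |K|$ for every $\omega$, since $F_\omega \subseteq E_k(\omega)$ and every point of $E_k(\omega)$ lies within $c^k|K|$ of the piece of $F_\omega$ sitting inside the same cylinder (here I use that each cylinder of $E_k(\omega)$ contains the corresponding cylinder of $F_\omega$, which is non-empty). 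Then, given $\varepsilon > 0$, I would choose $k$ large enough that $2 c^k |K| < \varepsilon$.

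Next, the crucial point: if $d_\Omega(u,v) < 2^{-k}$, then $u$ and $v$ share their first $k$ coordinates, so the level-$k$ approximations \emph{literally coincide}, $E_k(u) = E_k(v)$, because the first $k$ maps applied are identical. Combining this with the triangle inequality for $d_\mathcal{H}$ gives
\[
d_\mathcal{H}(F_u, F_v) \leq d_\mathcal{H}\big(F_u, E_k(u)\big) + d_\mathcal{H}\big(E_k(u), E_k(v)\big) + d_\mathcal{H}\big(E_k(v), F_v\big) \leq c^k|K| + 0 + c^k|K| < \varepsilon,
\]
which establishes continuity with $\delta = 2^{-k}$.

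The main obstacle I anticipate is the estimate $d_\mathcal{H}(E_k(\omega), F_\omega) \leq c^k|K|$, specifically verifying the direction that every point of $E_k(\omega)$ is close to $F_\omega$ rather than merely that $F_\omega \subseteq E_k(\omega)$. This requires knowing that each level-$k$ cylinder of $E_k(\omega)$ actually contains a point of $F_\omega$, i.e.\ that the nested intersection defining $F_\omega$ is non-empty within each cylinder. This follows from compactness (the cylinders $S_{\omega_1,i_1}\circ\dots\circ S_{\omega_k,i_k}(E_m)$ for $m \geq k$ form a decreasing sequence of non-empty compact sets, so their intersection is non-empty), but it is the step where the contraction hypothesis and compactness of $K$ are genuinely used, so I would state it carefully rather than treat it as routine.
```
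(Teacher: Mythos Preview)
Your argument is correct and is the standard way to prove this lemma. The paper itself does not supply a proof of this statement: it is included in a list of preliminary results where the author writes ``The proofs are elementary (or classical) and are omitted.'' So there is no paper proof to compare against, but the route you have chosen --- showing that the level-$k$ approximations $E_k(u)$ and $E_k(v)$ coincide whenever $u$ and $v$ agree in the first $k$ coordinates, and bounding $d_\mathcal{H}(E_k(\omega),F_\omega)$ by $c^k|K|$ using uniform contraction and compactness --- is exactly the natural argument one would expect. Your identification of the one non-trivial step (that each level-$k$ cylinder contains a point of $F_\omega$, which follows from the nested compact intersection being non-empty) is accurate, and your handling of it via compactness is fine.
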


Finally, we will state a version of the mass distribution principle which we use to estimate the Hausdorff and packing measures of random self-similar sets in Section \ref{SSproof}.

\begin{prop}[mass distribution principle] \label{MDP}
Let $\mu$ be a Borel probability measure supported on a Borel set $F \subset \mathbb{R}^n$ and let $\lambda \in (0, \infty)$.  Then
\begin{itemize}
\item[(1)] If $\limsup_{r \to 0} \mu\big(B(x,r)\big) \, r^{-s} \leq \lambda$ for all $x \in F$, then $\mathcal{H}^s(F) \geq \lambda^{-1}$;
\item[(2)] If $\liminf_{r \to 0} \mu\big(B(x,r)\big) \, r^{-s} \geq \lambda$ for all $x \in F$, then $\mathcal{P}^s(F) \leq \lambda^{-1}\, 2^s$.
\end{itemize}
\end{prop}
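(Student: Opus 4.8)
The plan is to prove both parts by the classical stratification argument underlying the mass distribution principle, exploiting that $\mu$ is a probability measure, so that $\mu(F) = 1$. In each case I fix a small $\epsilon > 0$, cut $F$ into the increasing Borel sets on which the density estimate holds with a uniform radius threshold, prove the desired bound on each stratum with the constants $\lambda \pm \epsilon$, and finally let $\epsilon \to 0$.

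For part (1), fix $\epsilon>0$ and set
\[
F_n = \big\{ x \in F : \mu(B(x,r)) \leq (\lambda+\epsilon)\, r^s \text{ for all } 0<r<1/n \big\}.
\]
The $\limsup$ hypothesis guarantees that $F = \bigcup_n F_n$ with the $F_n$ increasing, so $\mu(F_n) \to \mu(F) = 1$. Given any countable $\delta$-cover $\{U_i\}$ of $F$ by open sets with $\delta < 1/n$, I discard the sets missing $F_n$ and, for each remaining $U_i$, choose $x_i \in U_i \cap F_n$; since $U_i \subseteq B(x_i, |U_i|)$ and $|U_i| \leq \delta < 1/n$, the density bound gives $\mu(U_i) \leq (\lambda+\epsilon)|U_i|^s$. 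Summing and using that $\{U_i\}$ still covers $F_n$ yields $\sum_i |U_i|^s \geq \mu(F_n)/(\lambda+\epsilon)$. Taking the infimum over covers, then letting $\delta \to 0$, $n \to \infty$ and $\epsilon \to 0$, gives $\mathcal{H}^s(F) \geq \lambda^{-1}$.

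For part (2), fix $\epsilon \in (0,\lambda)$ and define the increasing Borel sets
\[
F_n = \big\{ x \in F : \mu(B(x,r)) \geq (\lambda-\epsilon)\, r^s \text{ for all } 0<r<1/n \big\},
\]
which again exhaust $F$ by the $\liminf$ hypothesis. Let $\{\bar B(x_i, r_i)\}$ be a centered $\delta$-packing of $F_n$ by disjoint closed balls with $\delta < 2/n$, so that each $x_i \in F_n$ and $r_i < 1/n$. Disjointness together with the density bound gives
\[
(\lambda - \epsilon)\sum_i r_i^s \leq \sum_i \mu\big(\bar B(x_i, r_i)\big) = \mu\Big(\bigcup_i \bar B(x_i,r_i)\Big) \leq 1,
\]
and since $|\bar B(x_i,r_i)| = 2 r_i$ this reads $\sum_i |\bar B(x_i, r_i)|^s \leq 2^s (\lambda - \epsilon)^{-1}$. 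Taking the supremum over packings and then $\delta \to 0$ gives $\mathcal{P}_0^s(F_n) \leq 2^s(\lambda-\epsilon)^{-1}$, whence $\mathcal{P}^s(F_n) \leq \mathcal{P}_0^s(F_n) \leq 2^s(\lambda-\epsilon)^{-1}$ by testing the defining infimum with the trivial cover $\{F_n\}$.

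The step I expect to be most delicate is the passage from the uniform bound on the strata to a bound on $F$ itself in part (2). Because the pre-measure $\mathcal{P}_0^s$ is only finitely subadditive, I cannot simply cover $F$ by the $F_n$ and sum the estimates; instead I will use that $\mathcal{P}^s$ is a metric (hence Borel regular) outer measure and is therefore continuous from below along the increasing sequence of Borel sets $F_n \uparrow F$, so that $\mathcal{P}^s(F) = \lim_n \mathcal{P}^s(F_n) \leq 2^s(\lambda-\epsilon)^{-1}$; letting $\epsilon \to 0$ then yields $\mathcal{P}^s(F) \leq \lambda^{-1} 2^s$. The two remaining technical points—Borel measurability of the strata $F_n$, which follows from the measurability of $(x,r) \mapsto \mu(B(x,r))$, and the harmless open-versus-closed-ball distinction in part (1), handled by inflating radii by an arbitrarily small amount—are routine and I would dispatch them briefly.
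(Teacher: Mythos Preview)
Your argument is correct and is the standard stratification proof of the mass distribution principle; the delicate step you flag in part~(2)---passing from the strata $F_n$ to $F$ via continuity from below of $\mathcal{P}^s$ on Borel sets---is indeed the right way to proceed, and your justification for the Borel measurability of the $F_n$ is sound. The paper does not supply its own proof of this proposition; it simply refers the reader to \cite{techniques} and \cite{mattila}, where essentially the argument you have outlined appears.
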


For a proof of this, see \cite{techniques, mattila}.

\subsection{Proof of Theorem \ref{main} (1)} \label{hmeasure}

Suppose $\inf_{u \in \Omega} \, \mathcal{H}^G(F_u) = 0$.  We will show that the set
\[
H = \{ \omega \in \Omega : \mathcal{H}^G( F_\omega) =0\}
\]
is residual.  Writing $H_{m,n} = \{ \omega \in \Omega : \mathcal{H}^{G}_{1/m} (F_\omega) < \tfrac{1}{n}\}$, we have
\begin{eqnarray*}
H= \bigcap_{m, n \in \mathbb{N}} H_{m,n},
\end{eqnarray*}
so it suffices to prove that each $H_{m,n}$ is open and dense in $(\Omega, d_\Omega)$.  Fix $m, n \in \mathbb{N}$.
\\ \\
(i) $H_{m,n}$ is open.
\\ \\
Let $\omega \in H_{m,n}$.  It follows that there exists a finite $(1/m)$-cover of $F_\omega$ by open sets, $\{U_i\}$,  satisfying
\[
\sum_i G(\lvert U_i \rvert) < \tfrac{1}{n}.
\]
Let $\mathcal{U} = \partial  \big(\cup_i U_i\big)$ be the boundary of the union of the covering sets, $\{U_i\}$, and let
\[
\eta = \min_{x \in \mathcal{U}, y \in F_\omega} d(x,y)
\]
which is strictly positive by the compactness of $F_\omega$.  Now choose $r>0$ sufficiently small to ensure that if $u \in B(\omega, r)$, then $d_\mathcal{H} (F_\omega, F_u) < \eta/2$.  Let $u \in B(\omega, r)$ and observe that $\{U_i\}$ is a $(1/m)$-cover for $F_u$ giving that $\mathcal{H}^{G}_{1/m} (F_u) \leq \sum_i G(\lvert U_i \rvert) <\tfrac{1}{n}$.  It follows that $B(\omega, r) \subseteq H_{m,n}$ and that $H_{m,n}$ is open.
\\ \\
(ii) $H_{m,n}$ is dense.
\\ \\
Let  $\omega=(\omega_1, \omega_2, \dots) \in \Omega$ and $\varepsilon>0$.  Choose $k \in \mathbb{N}$ such that $2^{-k }<\varepsilon$ and choose $u = (u_1,u_2, \dots) \in \Omega$ such that 
\[
\mathcal{H}^G( F_u)  < \frac{1/n}{\lvert \mathcal{I}_{\omega_1} \rvert \cdots \lvert \mathcal{I}_{\omega_k} \rvert }.
\]
Let $v =(\omega_1,\dots, \omega_k, u_1,u_2, \dots)$.  It follows that $d_\Omega(\omega, v) < \varepsilon$ and, since
\[
F_v =\bigcup_{j_1\in \mathcal{I}_{\omega_1}, \dots, j_k \in\mathcal{I}_{\omega_k}} S_{\omega_1, j_1} \circ \dots \circ S_{\omega_k,j_k}(F_u),
\]
it follows that
\begin{eqnarray*}
\mathcal{H}^G_{1/m}(F_v) \  \leq \ \mathcal{H}^G(F_v) &=&  \mathcal{H}^G \Bigg(\bigcup_{j_1\in \mathcal{I}_{\omega_1}, \dots, j_k \in\mathcal{I}_{\omega_k}} S_{\omega_1, j_1} \circ \dots \circ S_{\omega_k,j_k} \big(F_{u} \big)\Bigg) \\ \\
&\leq& \sum_{j_1\in \mathcal{I}_{\omega_1}, \dots, j_k \in\mathcal{I}_{\omega_k}} \mathcal{H}^G\big(F_{u} \big)\\ \\
&\leq& \lvert \mathcal{I}_{\omega_1} \rvert \cdots \lvert \mathcal{I}_{\omega_k} \rvert  \, \mathcal{H}^G\big(F_{u} \big)\\ \\
&<&  1/n
\end{eqnarray*}
and so $v \in H_{m,n}$, proving that $H_{m,n}$ is dense. \hfill \qed

\subsection{Proof of Theorem \ref{main} (2)} \label{pmeasure}

Assume that $G$ is a doubling gauge and that $\sup_{u \in \Omega} \, \mathcal{P}^G(F_u) = \infty$.  We will show that the set
\[
P = \{ \omega \in \Omega : \mathcal{P}^G( F_\omega)  = \infty \}
\]
is residual.  The extra step in the definition of packing measure causes it to be more awkward to work with than Hausdorff measure.  To circumvent these difficulties we need the following two technical lemmas.

\begin{lma} \label{packingkey}
Suppose $F \subset K$ is such that for all open $V$ which intersect $F$, $\mathcal{P}_0^G(F \cap V) = \infty$.  Then $\mathcal{P}^G(F) = \infty$.
\end{lma}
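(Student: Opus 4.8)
The plan is to argue by contradiction, turning the hypothesis into a statement about a single open set and then invoking the Baire category theorem. Suppose $\mathcal{P}^G(F) < \infty$. By the definition of packing measure as an infimum over countable covers, there is a cover $F \subseteq \bigcup_i F_i$ with $\sum_i \mathcal{P}^G_0(F_i) < \infty$; in particular $\mathcal{P}^G_0(F_i) < \infty$ for every $i$. The goal is then to produce a single open set $V$ meeting $F$ on which the pre-measure $\mathcal{P}^G_0(F \cap V)$ is finite, which directly contradicts the hypothesis and proves the (contrapositive of the) lemma.

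First I would tidy up the cover so that Baire category applies. Replace each $F_i$ by the relatively closed set $C_i = F \cap \overline{F_i}$. These still cover $F$, they are closed in $F$, and by monotonicity of the pre-measure together with the classical fact that the packing pre-measure of a set equals that of its closure, we have $\mathcal{P}^G_0(C_i) \le \mathcal{P}^G_0(\overline{F_i}) = \mathcal{P}^G_0(F_i) < \infty$. Thus $F$ is exhibited as a countable union of relatively closed sets, each of finite $\mathcal{P}^G_0$.

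Next, treating $F$ as a complete metric space in its own right, I would apply the Baire category theorem: $F$ cannot be written as a countable union of nowhere dense sets, so some $C_{i_0}$ must have nonempty interior relative to $F$. Concretely, there is an open set $V \subseteq K$ with $\emptyset \neq V \cap F \subseteq C_{i_0}$. Then $V$ meets $F$ and, by monotonicity, $\mathcal{P}^G_0(F \cap V) \le \mathcal{P}^G_0(C_{i_0}) < \infty$, which is exactly the contradiction sought. Unwinding this shows that if $\mathcal{P}^G_0(F \cap V) = \infty$ for every open $V$ meeting $F$, then $\mathcal{P}^G(F) = \infty$.

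The step carrying the real weight is the reduction to closed covering sets, which rests on two points that I would state carefully. The first is closure-invariance of the packing pre-measure, proved by perturbing the centres of a centred packing of $\overline{F_i}$ into $F_i$ and slightly shrinking the radii to preserve disjointness, using the continuity of $G$; this is what guarantees finiteness survives the passage to closures. The second is the completeness needed to invoke Baire. Completeness is genuinely essential: for a non-closed $F$, such as a countable dense subset of an interval, the hypothesis can hold while $\mathcal{P}^G(F)=0$, so the statement fails. The lemma is therefore really being used for the closed (indeed compact) attractors $F_\omega$, where $F$ is a closed subset of the compact space $K$ and hence complete, making the Baire argument legitimate.
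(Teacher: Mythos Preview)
Your argument is correct and follows essentially the same Baire-category route as the paper: cover $F$ by closed sets, apply Baire to find one with nonempty relative interior, and use monotonicity of the pre-measure. You are in fact more careful than the paper on two points it glosses over --- the reduction from arbitrary covers to closed ones via closure-invariance of $\mathcal{P}_0^G$, and the need for $F$ to be complete (which holds in the application since the $F_\omega$ are compact) --- so your write-up could stand as a cleaner version of the same proof.
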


\begin{proof}
Let $\{F_i\}_i$ be a countable sequence of closed sets such that $F \subset \cup_i F_i$.  The Baire Category Theorem implies that for some $i$ and some open set $V$, $F \cap V \subseteq F_i$ and hence $\mathcal{P}_0^G(F_i) = \infty$.  This means that, for every countable cover of $F$ by closed sets, at least one of the closed sets must have infinite packing pre-measure, proving the result.
\end{proof}

We will use Lemma \ref{packingkey} to prove the following Lemma, which will allow us to work with packing pre-measure instead of packing measure.

\begin{lma} \label{usepre}
We have $P = \{ \omega \in \Omega : \mathcal{P}^G_0( F_\omega)  = \infty \}$.
\end{lma}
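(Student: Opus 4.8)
The statement asserts an equality of two sets, so the plan is to prove both inclusions. One direction is immediate: taking the single-set cover $F_\omega \subseteq F_\omega$ in the definition of packing measure gives $\mathcal{P}^G(F_\omega) \leq \mathcal{P}_0^G(F_\omega)$, so any $\omega \in P$ forces $\mathcal{P}_0^G(F_\omega) = \infty$. The substance of the lemma is the reverse implication: that $\mathcal{P}_0^G(F_\omega) = \infty$ entails $\mathcal{P}^G(F_\omega) = \infty$. To establish this I would invoke Lemma \ref{packingkey}, which reduces the task to verifying its hypothesis, namely that $\mathcal{P}_0^G(F_\omega \cap V) = \infty$ for every open set $V$ meeting $F_\omega$.

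The first key step is to propagate the infiniteness of the pre-measure down the shifts of $\omega$. Write $\sigma^k\omega = (\omega_{k+1}, \omega_{k+2}, \dots)$. Using the self-referential decomposition $F_\omega = \bigcup_{j \in \mathcal{I}_{\omega_1}} S_{\omega_1, j}(F_{\sigma\omega})$ together with the finite subadditivity of the packing pre-measure (which follows by splitting any centered packing of a union according to which piece contains each ball centre) and the scaling bound of Lemma \ref{scaling}, I would obtain
\[
\mathcal{P}_0^G(F_\omega) \leq \sum_{j \in \mathcal{I}_{\omega_1}} \mathcal{P}_0^G\big(S_{\omega_1,j}(F_{\sigma\omega})\big) \leq \Big(\sum_{j \in \mathcal{I}_{\omega_1}} D^+\big(G, \text{Lip}^+(S_{\omega_1,j})\big)\Big)\, \mathcal{P}_0^G(F_{\sigma\omega}).
\]
Since there are finitely many maps and $G$ is doubling, the bracketed constant is finite, so $\mathcal{P}_0^G(F_\omega) = \infty$ forces $\mathcal{P}_0^G(F_{\sigma\omega}) = \infty$; iterating gives $\mathcal{P}_0^G(F_{\sigma^k\omega}) = \infty$ for every $k$.

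The second key step exploits the shrinking-cylinder structure of $F_\omega$. Given an open $V$ meeting $F_\omega$, fix $x \in F_\omega \cap V$ and $\rho > 0$ with $B(x,\rho) \subseteq V$. For each $k$ the point $x$ lies in some cylinder image $S_{\omega_1, j_1} \circ \dots \circ S_{\omega_k, j_k}(F_{\sigma^k\omega})$, and since the maps are contractions the diameter of the associated set $S_{\omega_1, j_1} \circ \dots \circ S_{\omega_k, j_k}(K)$ tends to $0$; hence for $k$ large this whole set, and in particular the bi-Lipschitz copy $S_{\omega_1, j_1} \circ \dots \circ S_{\omega_k, j_k}(F_{\sigma^k\omega})$ of $F_{\sigma^k\omega}$, lies inside $B(x,\rho) \subseteq V$. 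By monotonicity of $\mathcal{P}_0^G$ and the lower scaling bound of Lemma \ref{scaling},
\[
\mathcal{P}_0^G(F_\omega \cap V) \geq D^-\big(G, \text{Lip}^-(S_{\omega_1, j_1} \circ \dots \circ S_{\omega_k, j_k})\big)\, \mathcal{P}_0^G(F_{\sigma^k\omega}) = \infty,
\]
the constant being strictly positive because $G$ is doubling and the composition is bi-Lipschitz. As $V$ was an arbitrary open set meeting $F_\omega$, Lemma \ref{packingkey} then yields $\mathcal{P}^G(F_\omega) = \infty$, completing the reverse inclusion and hence the proof.

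The main obstacle I anticipate is the careful handling of the packing pre-measure, which is not a genuine outer measure: I must justify its finite subadditivity directly from the packing definition rather than appeal to measure axioms, and I must keep the doubling hypothesis in play throughout so that both $D^+(G,\cdot) < \infty$ (used passing to the union in the first step) and $D^-(G,\cdot) > 0$ (used in the lower bound of the second step) are genuinely available. The geometric input — that deep cylinders shrink into any prescribed neighbourhood of one of their points — is the same homogeneity fact already exploited for Lemma \ref{packingupperbox}, so it should present no real difficulty.
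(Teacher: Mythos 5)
Your proof is correct and takes essentially the same route as the paper's: both reduce the nontrivial inclusion to Lemma \ref{packingkey}, use the cylinder decomposition of $F_\omega$ with finite subadditivity of the pre-measure to show the shifted attractor has infinite pre-measure, and then push this into an arbitrary open set $V$ meeting $F_\omega$ via a deep cylinder contained in $V$ together with the lower scaling bound of Lemma \ref{scaling}. The only cosmetic difference is that you iterate the one-step decomposition where the paper performs the level-$k$ decomposition in a single stroke.
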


\begin{proof}
It is clear that $P \subseteq \{ \omega \in \Omega : \mathcal{P}^G_0( F_\omega)  = \infty \}$.  We will now prove the opposite inclusion.  Let $\omega \in \Omega$ be such that $\mathcal{P}_0^G(F_\omega) = \infty$ and let $V$ be an open set which intersects $F_\omega$.  Choose $k$ large enough to ensure that for some $i_1 \in \mathcal{I}_{\omega_1}, \dots, i_k \in \mathcal{I}_{\omega_1}$ we have
\[
S_{\omega_1, i_1} \circ \cdots \circ S_{\omega_k, i_k} \big(F_{(\omega_{k+1}, \omega_{k+2}, \dots)}\big) \subseteq F \cap V.
\]
Write $\phi = S_{\omega_1, i_1} \circ \cdots \circ S_{\omega_k, i_k}$ and $u = (\omega_{k+1}, \omega_{k+2}, \dots)$.  Since packing pre-measure is finitely additive, we have
\begin{eqnarray*} \label{decomp}
\infty \ = \ \mathcal{P}_0^G(F_\omega)  &=& \mathcal{P}_0^G \Bigg( \bigcup_{i_1\in \mathcal{I}_{\omega_1}, \dots, i_k \in\mathcal{I}_{\omega_k}} S_{\omega_1, i_1} \circ \dots \circ S_{\omega_k,i_k} (F_u) \ \Bigg) \\ \\
&\leq&   \sum_{i_1\in \mathcal{I}_{\omega_1}, \dots, i_k \in\mathcal{I}_{\omega_k}} \mathcal{P}_0^G(F_u)  \\ \\
&\leq&  \lvert \mathcal{I}_{\omega_1} \rvert \cdots \lvert \mathcal{I}_{\omega_k} \rvert \ \mathcal{P}_0^G(F_u)
\end{eqnarray*}
and therefore
\begin{eqnarray*}
\mathcal{P}_0^G(F \cap V)  &\geq& \mathcal{P}_0^G(\phi(F_u)) \\ \\
&\geq& D^-\big(G, \text{Lip}^-(\phi)\big)\, \mathcal{P}_0^G(F_u) \\ \\
&=& \infty.
\end{eqnarray*}
Finally, by Lemma \ref{packingkey}, we have that $\mathcal{P}^G(F_\omega) = \infty$ and hence $\omega \in P$.
\end{proof}

Writing $P_{m,n}= \{ \omega \in \Omega : \mathcal{P}^{G}_{0, \, 1/m} (F_\omega) > n\}$, it follows from Lemma \ref{usepre} that
\[
P = \{ \omega \in \Omega : \mathcal{P}^G_0( F_\omega)  = \infty \} = \bigcap_{m, n \in \mathbb{N}} P_{m,n},
\]
so it suffices to prove that each $P_{m,n}$ is open and dense in $(\Omega, d_\Omega)$.  Fix $m, n \in \mathbb{N}$.
\\ \\
(i) $P_{m,n}$ is open.
\\ \\
Let $\omega \in P_{m,n}$.  It follows that there exists a finite centered $(1/m)$-packing of $F_\omega$ by closed balls, $\{U_i\}$, satisfying
\[
\sum_i G(\lvert U_i \rvert) >n.
\]
Let
\[
\eta = \min_{i \neq j} \min_{x \in U_i, y \in U_j} d(x,y)
\]
which is strictly positive since the sets $U_i$ are closed.  Now choose $r>0$ sufficiently small to ensure that, if $u \in B(\omega, r)$, then $d_\mathcal{H} (F_\omega, F_u) < \eta/2$ and fix such a $u \in B(\omega, r)$.  It follows that we can find a centered $(1/m)$-packing, $\{\tilde U_i\}$, of $F_u$, where $\tilde U_i$ is centered in $F_u$ and has the same diameter as $U_i$.  It follows that $\mathcal{P}^{G}_{0,  \,1/m} (F_u) \geq \sum_i G(\lvert U_i \rvert) >n$ and therefore $B(\omega, r) \subseteq P_{m,n}$, proving that $P_{m,n}$ is open.
\\ \\
(ii) $P_{m,n}$ is dense.
\\ \\
Let  $\omega=(\omega_1, \omega_2, \dots) \in \Omega$ and $\varepsilon>0$.  Choose $k \in \mathbb{N}$ such that $2^{-k }<\varepsilon$ and choose $u = (u_1,u_2, \dots) \in \Omega$ such that
\[
\mathcal{P}_0^G( F_u) \geq \frac{n}{\max_{j_1\in \mathcal{I}_{\omega_1}, \dots, j_k \in\mathcal{I}_{\omega_k}} D\big(G, \text{Lip}^-\big(S_{\omega_1, j_1} \circ \dots \circ S_{\omega_k,j_k}\big) \big)}
\]
Let $v =(\omega_1,\dots, \omega_k, u_1,u_2, \dots)$.  It follows that $d_\Omega(\omega, v) < \varepsilon$ and, since
\[
F_v =\bigcup_{j_1\in \mathcal{I}_{\omega_1}, \dots, j_k \in\mathcal{I}_{\omega_k}} S_{\omega_1, j_1} \circ \dots \circ S_{\omega_k,j_k}(F_u),
\]
it follows that
\begin{eqnarray*}
\mathcal{P}^G_{0, \, 1/m}(F_v) \ \geq \  \mathcal{P}_0^G(F_v) &=&  \mathcal{P}^G_0 \Bigg(\bigcup_{j_1\in \mathcal{I}_{\omega_1}, \dots, j_k \in\mathcal{I}_{\omega_k}} S_{\omega_1, j_1} \circ \dots \circ S_{\omega_k,j_k} (F_{u} )\Bigg) \\ \\
&\geq& \max_{j_1\in \mathcal{I}_{\omega_1}, \dots, j_k \in\mathcal{I}_{\omega_k}} \mathcal{P}^G_0 \big(S_{\omega_1, j_1} \circ \dots \circ S_{\omega_k,j_k} (F_{u} )\big)\\ \\
&\geq&  \max_{j_1\in \mathcal{I}_{\omega_1}, \dots, j_k \in\mathcal{I}_{\omega_k}} D\big(G, \text{Lip}^-\big(S_{\omega_1, j_1} \circ \dots \circ S_{\omega_k,j_k}\big) \big) \, \mathcal{P}^G_0 (F_{u} )\\ \\
&\geq&  n
\end{eqnarray*}
and so $v \in P_{m,n}$, proving that $P_{m,n}$ is dense. \hfill \qed

\subsection{Proof of Theorem \ref{main} (5)} \label{lbox}

It is well-known that lower box dimension is not finitely stable, see \cite{falconer}, Chapter 3, i.e., it is not true in general that $\underline{\dim}_\text{B} E \cup F \leq \max \{ \underline{\dim}_\text{B} E, \,  \underline{\dim}_\text{B} F\}$.  To get around this problem in the following proof, we begin with a simple technical lemma.

\begin{lma} \label{lowerstable}
Let $F \subset K$ be such that that $\underline{\dim}_\text{\emph{B}} F = s$ and let $\{\phi_i\}_{i \in \mathcal{S}}$ be a finite collection of Lipschitz contractions.  Then
\[
\underline{\dim}_\text{\emph{B}} \bigcup_{i \in \mathcal{S}} \phi_i(F) \leq s.
\]
\end{lma}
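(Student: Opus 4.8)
The plan is to exploit the fact that all the sets $\phi_i(F)$ are Lipschitz images of the \emph{same} set $F$, so that their covering functions are simultaneously controlled by a single function of the scale, namely $N_\epsilon(F)$. This is exactly what circumvents the failure of finite stability quoted before the lemma: that failure arises because the $\liminf$ defining the lower box dimensions of two different sets may be attained along different sequences of scales, whereas here every piece is efficiently coverable at the \emph{same} scales, so their contributions to $N_\delta$ are all governed by one counting function.

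First I would record the two standard estimates for the covering function $N_\delta$. Write $L_i = \text{Lip}^+(\phi_i)$ and $L = \max_{i \in \mathcal{S}} L_i$. A cover of $F$ by sets of diameter at most $\delta / L_i$ maps under $\phi_i$ to a cover of $\phi_i(F)$ by sets of diameter at most $\delta$, which gives
\[
N_\delta(\phi_i(F)) \leq N_{\delta/L_i}(F).
\]
Since $\phi_i$ need not be an open map, I would invoke here that box dimension may equivalently be computed with covers by arbitrary sets (the Remark following the definitions in Section 1.2). Combining this with finite subadditivity of $N_\delta$, the monotonicity of $\epsilon \mapsto N_\epsilon(F)$, and the fact that $\delta/L_i \geq \delta/L$, I obtain
\[
N_\delta\Big(\bigcup_{i \in \mathcal{S}} \phi_i(F)\Big) \leq \sum_{i \in \mathcal{S}} N_{\delta/L_i}(F) \leq \lvert \mathcal{S} \rvert \, N_{\delta/L}(F).
\]
If $L = 0$ then every $\phi_i$ is constant and the union is finite, so the conclusion is immediate; hence I may assume $L > 0$.

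Next I would substitute this bound into the definition of lower box dimension and pass to the limit. Taking logarithms and writing $\epsilon = \delta/L$,
\[
\frac{\log N_\delta\big(\bigcup_{i} \phi_i(F)\big)}{-\log \delta} \leq \frac{\log \lvert \mathcal{S} \rvert + \log N_\epsilon(F)}{-\log L - \log \epsilon}.
\]
As $\delta \to 0$ we have $\epsilon \to 0$, and the bounded constants $\log \lvert \mathcal{S} \rvert$ and $-\log L$ become negligible against $-\log \epsilon \to \infty$. The remaining routine step is the elementary fact that for constants $a, b$ and any function $g$,
\[
\liminf_{\epsilon \to 0} \frac{a + g(\epsilon)}{b - \log \epsilon} = \liminf_{\epsilon \to 0} \frac{g(\epsilon)}{-\log \epsilon},
\]
which I would verify by extracting the subsequences realising each side and using $\tfrac{-\log \epsilon}{b - \log \epsilon} \to 1$. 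Applying this with $g(\epsilon) = \log N_\epsilon(F)$ yields $\underline{\dim}_\text{B} \bigcup_{i} \phi_i(F) \leq \underline{\dim}_\text{B} F = s$, as required.

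The only genuinely delicate point is this last $\liminf$ identity: one must check that the additive constants in numerator and denominator do not alter the value. This is precisely where the proof uses that every $\phi_i(F)$ is bounded in terms of the \emph{single} counting function $N_\epsilon(F)$ evaluated at a common scale. Were the pieces unrelated sets, one would instead face the $\liminf$ of a sum of independent counting functions, and the bound by $s$ would genuinely fail; the common-scale reduction is therefore the crux rather than any individual computation.
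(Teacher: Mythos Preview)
Your proof is correct and follows the same overall strategy as the paper: bound $N_\delta$ of the union by a multiple of a single covering count of $F$, then take the $\liminf$. The one difference is that you never use the hypothesis that the $\phi_i$ are \emph{contractions}. The paper exploits $\text{Lip}^+(\phi_i) < 1$ to conclude $\delta/\text{Lip}^+(\phi_i) \geq \delta$ and hence $N_{\delta/\text{Lip}^+(\phi_i)}(F) \leq N_\delta(F)$, arriving directly at
\[
N_\delta\Big(\bigcup_{i \in \mathcal{S}} \phi_i(F)\Big) \leq \lvert \mathcal{S}\rvert\, N_\delta(F),
\]
after which taking logs and the $\liminf$ is immediate, with no change of variable and no ``delicate $\liminf$ identity'' to verify. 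Your version, by bounding with $N_{\delta/L}(F)$ for $L = \max_i L_i$, in effect proves the slightly stronger statement for arbitrary Lipschitz maps, at the cost of the extra asymptotic step. Both are fine; the paper's is shorter because it uses the contraction hypothesis you left on the table.
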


\begin{proof}
For all $\delta>0$ we have
\[
N_\delta \Big( \bigcup_{i \in \mathcal{S}} \phi_i(F)\Big) \leq  \sum_{i \in \mathcal{S}} N_\delta \big(\phi_i(F)\big) \leq \sum_{i \in \mathcal{S}} N_{\delta/\text{Lip}^+(\phi_i)}(F)
\leq \lvert \mathcal{S}\rvert  N_{\delta}(F).
\]
Taking logs, dividing by $-\log \delta$ and computing the limes inferior completes the proof.
\end{proof}

We now turn to the proof of Theorem \ref{main} (5).  Let $b=\inf_{u \in \Omega} \, \underline{\dim}_\text{{B}} F_u $.  We will show that the set
\[
B=\{ \omega \in \Omega : \underline{\dim}_\text{{B}} F_\omega \leq b\}
\]
is residual, from which Theorem \ref{main} (5) follows.  Writing
\[
B_n =\bigcup_{\delta \in (0, 1/n)} \Big\{ \omega \in \Omega : N_\delta(F_\omega) \leq \delta^{-(b+\tfrac{1}{n})} \Big\},
\]
we have
\[
B= \bigcap_{n \in \mathbb{N}} \ \bigcup_{\delta \in (0, 1/n)} \Big\{ \omega \in \Omega : \frac{\log N_\delta(F_\omega)}{-\log \delta} \leq b+\tfrac{1}{n} \Big\} = \bigcap_{n \in \mathbb{N}} B_n,
\]
so it suffices to prove that each $B_{n}$ is open and dense in $(\Omega, d_\Omega)$.  Fix $n \in \mathbb{N}$.
\\ \\
(i) $B_{n}$ is open.
\\ \\
Let $\omega \in B_n$.  It follows that for some $\delta<1/n$ there exists a $\delta$-cover of $F_\omega$ by fewer than $\delta^{-(b+\tfrac{1}{n})}$ open sets, $\{U_i\}$.  Let $\mathcal{U} = \partial  \big(\cup_i U_i\big)$ be the boundary of the union of the covering sets, $\{U_i\}$, and let
\[
\eta = \min_{x \in \mathcal{U}, y \in F_\omega} d(x,y)
\]
which is strictly positive by the compactness of $F_\omega$.  Now choose $r>0$ sufficiently small to ensure that if $u \in B(\omega, r)$, then $d_\mathcal{H} (F_\omega, F_u) < \eta/2$.  Let $u \in B(\omega, r)$ and observe that $\{U_i\}$ is a $(1/m)$-cover for $F_u$ giving that $N_\delta(F_\omega) \leq \delta^{-(b+\tfrac{1}{n})}$.  It follows that $B(\omega, r) \subseteq B_n$ and therefore $B_{n}$ is open.
\\ \\
(ii) $B_{n}$ is dense.
\\ \\
Let $\omega=(\omega_1, \omega_2, \dots) \in \Omega$ and $\varepsilon>0$.  Let  $u=(u_1, u_2, \dots) \in \Omega$ be such that $\underline{\dim}_\text{B} F_{u} \leq b+1/n$.  Now choose $k \in \mathbb{N}$ such that $2^{-k} <\varepsilon$ and let $v =(\omega_1,\dots, \omega_k,u_1, u_2, \dots)$.  It follows that $d_\Omega(v, \omega) < \varepsilon$ and, furthermore,
\[
F_v =\bigcup_{j_1\in \mathcal{I}_{\omega_1}, \dots, j_k \in\mathcal{I}_{\omega_k}} S_{\omega_1, j_1} \circ \dots \circ S_{\omega_k,j_k}(F_u).
\]
and since, for all $j_1\in \mathcal{I}_{\omega_1}, \dots, j_k \in\mathcal{I}_{\omega_k}$ the map $S_{\omega_1, j_1} \circ \dots \circ S_{\omega_k,j_k}$ is a Lipschitz contraction, it follows from Lemma \ref{lowerstable} that $\underline{\dim}_\text{B} F_{v} \leq \underline{\dim}_\text{B} F_{u} \leq b+1/n$ and so $v \in B_n$, proving that $B_n$ is dense. \hfill \qed

\subsection{Proof of Theorem \ref{infinite} (2)} \label{infiniteproof}

Write $h = \inf_{u \in \Omega} \,\dim_\H F_u$ and assume that $\inf_{u \in \Omega} \,\mathcal{H}^h( F_u)= \mathcal{H}_0 >0$, $v=(v_1, v_2, \dots) \in \Omega$ satisfies condition (\ref{lip-}) and that the RIFS satisfies the $\mathcal{H}^h$-MSC.  We will show the set
\[
M = \{ \omega \in \Omega : \mathcal{H}^h(F_\omega) < \infty\}
\]
is meagre, from which the result follows.  Writing $M_n = \{ \omega \in \Omega : \mathcal{H}^h(F_\omega) < n\}$, we have
\[
M  = \bigcup_{n \in \mathbb{N}} M_n,
\]
so it suffices to show that each $M_n$ is nowhere dense.  Fix $n \in \mathbb{N}$, $\omega \in M_n$ and $r>0$.  Now choose $k \in \mathbb{N}$ such that $2^{-k} < r$. It follows that the open ball $B_l = B \big( (\omega_1, \dots, \omega_k, v_1, v_2, \dots ), \, 2^{-l} \big)$ is contained in $B(\omega, r)$ for all $l>k$. Let $u \in B_l$, and note that
\[
u =  (\omega_1, \dots, \omega_k, v_1, \dots, v_{l-k}, u_1, u_2, \dots )
\]
for some $(u_1, u_2, \dots ) \in \Omega$.  Noting that the RIFS satisfies the $\mathcal{H}^h$-MSC and that $\text{Lip}^-$ is supermultiplicative, we have

\begin{eqnarray*}
\mathcal{H}^h(F_u) &=& \mathcal{H}^h \Bigg(\bigcup_{i_1\in \mathcal{I}_{\omega_1}, \dots, i_k \in\mathcal{I}_{\omega_k}} \bigcup_{j_1\in \mathcal{I}_{v_1}, \dots, j_{l-k} \in\mathcal{I}_{v_{l-k}}} S_{\omega_1, i_1} \circ \dots \circ S_{\omega_k,i_k} \circ S_{v_1, j_1} \circ \dots \circ S_{v_{l-k},j_{l-k}} \big(F_{(u_1, u_2, \dots)} \big)\Bigg) \\ \\
&=&  \sum_{i_1\in \mathcal{I}_{\omega_1}, \dots, i_k \in\mathcal{I}_{\omega_k}} \sum_{j_1\in \mathcal{I}_{v_1}, \dots, j_{l-k} \in\mathcal{I}_{v_{l-k}}}\mathcal{H}^h \Bigg( S_{\omega_1, i_1} \circ \dots \circ S_{\omega_k,i_k} \circ S_{v_1, j_1} \circ \dots \circ S_{v_{l-k},j_{l-k}} \big(F_{(u_1, u_2, \dots)} \big)\Bigg) \\ \\
&\geq&  \sum_{i_1\in \mathcal{I}_{\omega_1}, \dots, i_k \in\mathcal{I}_{\omega_k}} \sum_{j_1\in \mathcal{I}_{v_1}, \dots, j_{l-k} \in\mathcal{I}_{v_{l-k}}} \text{Lip}^-(S_{\omega_1, i_1} \circ \dots \circ S_{\omega_k,i_k} \circ S_{v_1, j_1} \circ \dots \circ S_{v_{l-k},j_{l-k}})^h \, \,  \mathcal{H}^h \big(F_{(u_1, u_2, \dots)} \big) \\ \\
&\geq&  \mathcal{H}_0 \,  \Bigg(\sum_{i_1\in \mathcal{I}_{\omega_1}, \dots, i_k \in\mathcal{I}_{\omega_k}}\text{Lip}^-(S_{\omega_1, i_1} \circ \dots \circ S_{\omega_k,i_k})^h\Bigg) \Bigg(  \sum_{j_1\in \mathcal{I}_{v_1}, \dots, j_{l-k} \in\mathcal{I}_{v_{l-k}}} \text{Lip}^-( S_{v_1, j_1} \circ \dots \circ S_{v_{l-k},j_{l-k}})^h \Bigg) \\ \\
&\to& \infty
\end{eqnarray*}
as $l \to \infty$.  It follows that we may choose $l$ large enough to ensure $B_l \subseteq B(\omega,r) \setminus M_n$ and so $M_n$ is nowhere dense.

\subsection{Proof of Theorem \ref{zero} (2)} \label{zeroproof}

Write $p = \sup_{u \in \Omega} \,\dim_\P F_u$ and assume that $\sup_{u \in \Omega} \,\mathcal{P}^p( F_u)= \mathcal{P}_0 < \infty$ and that $v=(v_1, v_2, \dots) \in \Omega$ satisfies condition (\ref{lip+}).  We will show the set
\[
N = \{ \omega \in \Omega : \mathcal{P}^h(F_\omega) >0\}
\]
is meagre, from which the result follows.  Writing $N_n =\{ \omega \in \Omega : \mathcal{P}^p(F_\omega) >1/n\}$, we have
\[
N  = \bigcup_{n \in \mathbb{N}} N_n,
\]
so it suffices to show that each $N_n$ is nowhere dense.  Fix $n \in \mathbb{N}$, $\omega \in N_n$ and $r>0$.  Now choose $k \in \mathbb{N}$ such that $2^{-k} < r$. It follows that the open ball $B_l = B \big( (\omega_1, \dots, \omega_k, v_1, v_2, \dots ), \, 2^{-l} \big)$ is contained in $B(\omega, r)$ for all $l>k$. Let $u \in B_l$, and note that
\[
u =  (\omega_1, \dots, \omega_k, v_1, \dots, v_{l-k}, u_1, u_2, \dots )
\]
for some $(u_1, u_2, \dots ) \in \Omega$.  Noting that $\text{Lip}^+$ is submultiplicative, we have

\begin{eqnarray*}
\mathcal{P}^p(F_u) &=& \mathcal{P}^p \Bigg(\bigcup_{i_1\in \mathcal{I}_{\omega_1}, \dots, i_k \in\mathcal{I}_{\omega_k}} \bigcup_{j_1\in \mathcal{I}_{v_1}, \dots, j_{l-k} \in\mathcal{I}_{v_{l-k}}} S_{\omega_1, i_1} \circ \dots \circ S_{\omega_k,i_k} \circ S_{v_1, j_1} \circ \dots \circ S_{v_{l-k},j_{l-k}} \big(F_{(u_1, u_2, \dots)} \big)\Bigg) \\ \\
&\leq&  \sum_{i_1\in \mathcal{I}_{\omega_1}, \dots, i_k \in\mathcal{I}_{\omega_k}} \sum_{j_1\in \mathcal{I}_{v_1}, \dots, j_{l-k} \in\mathcal{I}_{v_{l-k}}}\mathcal{P}^p \Bigg( S_{\omega_1, i_1} \circ \dots \circ S_{\omega_k,i_k} \circ S_{v_1, j_1} \circ \dots \circ S_{v_{l-k},j_{l-k}} \big(F_{(u_1, u_2, \dots)} \big)\Bigg) \\ \\
&\leq&  \sum_{i_1\in \mathcal{I}_{\omega_1}, \dots, i_k \in\mathcal{I}_{\omega_k}} \sum_{j_1\in \mathcal{I}_{v_1}, \dots, j_{l-k} \in\mathcal{I}_{v_{l-k}}} \text{Lip}^+(S_{\omega_1, i_1} \circ \dots \circ S_{\omega_k,i_k} \circ S_{v_1, j_1} \circ \dots \circ S_{v_{l-k},j_{l-k}})^p \, \,  \mathcal{P}^p \big(F_{(u_1, u_2, \dots)} \big) \\ \\
&\leq&  \mathcal{P}_0 \, \Bigg(  \sum_{i_1\in \mathcal{I}_{\omega_1}, \dots, i_k \in\mathcal{I}_{\omega_k}}\text{Lip}^+(S_{\omega_1,i_1} \circ \dots \circ S_{\omega_k,i_k})^p \Bigg) \Bigg(  \sum_{j_1\in \mathcal{I}_{v_1}, \dots, j_{l-k}\in\mathcal{I}_{v_{l-k}}} \text{Lip}^+( S_{v_1, j_1} \circ \dots \circ S_{v_{l-k},j_{l-k}})^p  \Bigg)   \\ \\
&\to& 0
\end{eqnarray*}
as $l \to \infty$.  It follows that we may choose $l$ large enough to ensure $B_l \subseteq B(\omega,r) \setminus N_n$ and so $N_n$ is nowhere dense.

\subsection{Proof of Theorem \ref{SS}} \label{SSproof}

The proof of Theorem \ref{SS} is a standard application of the mass distribution principle, Proposition \ref{MDP}.  Similar arguments can be found in, for example, \cite{falconer} Chapter 9.
\\ \\
For each $i \in D$, let $s_i$ be as in Section \ref{ssresults} and write $c = \min_{i \in D, \, j \in \mathcal{I}_i} \text{Lip}(S_{i,j})$. We will now define a mass distribution on $F_\omega$ which will be used in the subsequent proofs.  First define a measure, $\mu_\omega^{\text{sym}}$, on the symbollic space, $\prod_{l=1}^{\infty} \mathcal{I}_{\omega_l}$, by
\[
\mu_\omega^{\text{sym}}\Big( \big\{ (j_1, j_2, \dots) : j_1=i_1, \dots, j_k=i_k \big\} \Big) = \text{Lip}(S_{\omega_1, i_1})^{s_{\omega_1}} \cdots \text{Lip}( S_{\omega_k, i_k})^{s_{\omega_k}}
\]
for each $ (i_1, \dots, i_k) \in \prod_{l=1}^{k} \mathcal{I}_{\omega_l}$.  Now transfer $\mu_\omega^{\text{sym}}$ to a Borel probability measure $\mu_\omega$, supported on $F_\omega$, by
\[
\mu_\omega(E) = \mu_\omega^{\text{sym}}\bigg( \Big\{(i_1, i_2, \dots) \in \prod_{l=1}^{\infty} \mathcal{I}_{\omega_l} \ : \  \bigcap_{k} S_{\omega_1, i_1} \circ \dots \circ S_{\omega_k,i_k}(K) \in E \Big\} \bigg)
\]
for Borel sets $E \subseteq K$.
\\ \\
\emph{Proof of (1)}
\\ \\
Since each deterministic IFS satisfies the OSC, it is clear that $\sup_{\omega \in \Omega} \mathcal{P}^{s_{\max}}(F_\omega) \geq \sup_{\omega \in \Omega} \mathcal{H}^{s_{\max}}(F_\omega)>0$.  We will now show that $\sup_{\omega \in \Omega} \mathcal{P}^{s_{\max}}(F_\omega)< \infty$.  Fix $\omega = (\omega_1, \omega_2, \dots) \in \Omega$, let $x \in F_\omega$ and $r>0$.  Now let $l \in \mathbb{N}$ and $i_1\in \mathcal{I}_{\omega_1}, \dots, i_l \in \mathcal{I}_{\omega_l}$ be such that
\[
x \in S_{\omega_1, i_1} \circ \cdots \circ S_{\omega_l, i_l} (F_\omega)
\]
and
\[
\text{Lip}(S_{\omega_1, i_1}) \cdots \text{Lip}( S_{\omega_l, i_l}) \lvert K \rvert < r \leq \text{Lip}(S_{\omega_1, i_1}) \cdots \text{Lip}( S_{\omega_{l-1}, i_{l-1}}) \lvert K \rvert.
\]
It follows that
\begin{eqnarray*}
\mu_\omega (B(x,r)) \, r^{-s_{\max}} &\geq& \mu_\omega\Big(S_{\omega_1, i_1} \circ \cdots \circ S_{\omega_l, i_l} (F_\omega) \Big)  \, r^{-s_{\max}}\\ \\
&\geq& \text{Lip}(S_{\omega_1, i_1})^{s_{\omega_1}} \cdots \text{Lip}( S_{\omega_l, i_l})^{s_{\omega_l}}  \, r^{-s_{\max}}\\ \\
&\geq& \bigg(\frac{\text{Lip}(S_{\omega_1, i_1}) \cdots \text{Lip}( S_{\omega_l, i_l}) }{ r}\bigg)^{s_{\max}}\\ \\
&\geq& \bigg(\frac{r \, c \, \lvert K \rvert^{-1} }{ r}\bigg)^{s_{\max}}\\ \\
&=& \big(c / \lvert K \rvert \big)^{s_{\max}}
\end{eqnarray*}
and by Proposition \ref{MDP} (2) it follows that $\mathcal{P}^{s_{\max}}(F_\omega) \leq \big(2\,  \lvert K \rvert/c \big)^{s_{\max}} < \infty$ and, in particular,
\[
0 <  \sup_{\omega \in \Omega} \,  \mathcal{P}^{s_{\max}}(F_\omega)  < \infty.
\]

\emph{Proof of (3)}
\\ \\
We will need the following lemma which appears as Lemma 9.2 in \cite{falconer}.

\begin{lma} \label{balls}
Let $\{V_i\}$ be a collection of disjoint open subsets of $\mathbb{R}^n$ such that each $V_i$ contains a ball of radius $a_1r$ and is contained in a ball of radius $a_2r$.  Then any ball, $B$, of radius $r$ intersects at most $(1+2a_2)^na_1^{-n}$ of the closures, $\overline{V}_i$.
\end{lma}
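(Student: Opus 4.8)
The plan is to run the classical volume-packing argument (this is exactly the estimate behind Lemma 9.2 of \cite{falconer}). First I would fix a ball $B = B(x,r)$ and restrict attention to the subcollection of those $V_i$ whose closures meet $B$; call their number $Q$. The key geometric observation is that all such $V_i$ are confined to a single enlarged ball centred at $x$. Indeed, if $\overline{V}_i \cap B \neq \emptyset$, choose a point $y \in \overline{V}_i \cap B$. Since $V_i$ is contained in a ball of radius $a_2 r$, its diameter is at most $2 a_2 r$, so every point of $\overline{V}_i$ lies within distance $2 a_2 r$ of $y$; as $d(x,y) \leq r$, the triangle inequality gives $\overline{V}_i \subseteq B\big(x, (1+2a_2) r\big)$.

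Next I would exploit the disjointness together with the inner-ball condition through $n$-dimensional Lebesgue measure, which I denote $\mathcal{L}^n$, writing $c_n = \mathcal{L}^n\big(B(0,1)\big)$ for the volume of the unit ball. Each $V_i$ in the subcollection contains a ball of radius $a_1 r$, so $\mathcal{L}^n(V_i) \geq c_n (a_1 r)^n$. Because the $V_i$ are pairwise disjoint and all contained in $B\big(x, (1+2a_2)r\big)$, summing volumes over the subcollection yields $Q \, c_n (a_1 r)^n \leq \mathcal{L}^n\big(B(x,(1+2a_2)r)\big) = c_n \big((1+2a_2) r\big)^n$.

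Rearranging, the factors $c_n$ and $r^n$ cancel, leaving $Q \leq (1+2a_2)^n a_1^{-n}$, which is precisely the claimed bound; the cancellation of $r^n$ reflects the expected scale-invariance of the estimate. There is essentially no serious obstacle here: the only step requiring any care is the geometric confinement, namely extracting the correct enlarged radius $(1+2a_2)r$ from the containment and intersection hypotheses, after which the volume comparison is immediate.
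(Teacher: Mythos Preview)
Your proof is correct and is precisely the standard volume-packing argument. The paper does not supply its own proof of this lemma but simply cites it as Lemma~9.2 of \cite{falconer}, and your argument is exactly the one given there.
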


Let $\mathcal{O}$ be the open set used in the UOSC and let $a_1, \, a_2$ be such that $\mathcal{O}$ contains a ball of radius $a_1$ and is contained in a ball of radius $a_2$.  Let $\mathcal{I}_\omega^* = \bigcup_{k \in \mathbb{N}}\prod_{l=1}^{k} \mathcal{I}_{\omega_l}$ and, for $r>0$, let $\mathcal{I}_\omega^r$ be an $r$-stopping defined by
\[
\mathcal{I}_\omega^r = \big\{ (i_1, i_2, \dots, i_l) \in \mathcal{I}_\omega^* \ : \ \text{Lip}(S_{\omega_1, i_1}) \cdots \text{Lip}( S_{\omega_l, i_l}) \leq r < \text{Lip}(S_{\omega_1, i_1}) \cdots \text{Lip}( S_{\omega_{l-1}, i_{l-1}}) \big\}.
\]
Note that
\begin{itemize}
\item[(1)]  $\big\{S_{\omega_1,i_1} \circ \cdots \circ S_{\omega_l,i_l}(\mathcal{O}): (i_1, i_2, \dots, i_l) \in \mathcal{I}_\omega^r \big\}$ is a collection of disjoint open subsets of $\mathbb{R}^n$;
\item[(2)]  Each $S_{\omega_1,i_1} \circ \cdots \circ S_{\omega_l,i_l}(\mathcal{O})$ contains a ball of radius $c \,a_1r$ and is contained in a ball of radius $a_2r$;
\item[(3)]  For each $(i_1, i_2, \dots, i_l) \in \mathcal{I}_\omega^r$, we have
\[
S_{\omega_1,i_1} \circ \cdots \circ S_{\omega_l,i_l}(F_{(\omega_{l+1}, \omega_{l+2}, \dots)}) \subseteq S_{\omega_1,i_1} \circ \cdots \circ S_{\omega_l,i_l}(\overline{\mathcal{O}}).
\]
\end{itemize}

Since each deterministic IFS satisfies the OSC, it is clear that $\inf_{\omega \in \Omega} \mathcal{H}^{s_{\min}}(F_\omega)< \infty$.  We will now show that $\inf_{\omega \in \Omega} \mathcal{H}^{s_{\min}}(F_\omega)> 0$.  Fix $\omega = (\omega_1, \omega_2, \dots) \in \Omega$, let $x \in F_\omega$ and $r>0$.  It follows from (1)--(3) and Lemma \ref{balls} that
\begin{eqnarray*}
\mu_\omega (B(x,r)) \, r^{-s_{\min}} &=& r^{-s_{\min}} \ \mu_\omega \big( B(x,r) \cap F \big) \\ \\
 &=& r^{-s_{\min}} \ \mu_\omega^{\text{sym}}  \bigg( \Big\{ (i_1, i_2, \dots) \in \prod_{l=1}^{\infty} \mathcal{I}_{\omega_l} \ : \   \bigcap_{k} S_{\omega_1, i_1} \circ \dots \circ S_{\omega_k,i_k}(K) \in  B(x,r) \cap F \Big\} \bigg) \\ \\
&\leq& r^{-s_{\min}} \ \mu_\omega^{\text{sym}} \Bigg( \ \bigcup_{\substack{
                  (i_1, i_2, \dots, i_l) \in \mathcal{I}_\omega^r:\\
                 B(x,r) \cap S_{\omega_1,i_1} \circ \cdots \circ S_{\omega_l,i_l}(\overline{\mathcal{O}}) \neq \emptyset
                }} \big\{ (j_1, j_2, \dots) : j_1=i_1, \dots, j_l=i_l \big\} \ \Bigg) \\ \\
&\leq& r^{-s_{\min}} \ \sum_{\substack{
                  (i_1, i_2, \dots, i_l) \in \mathcal{I}_\omega^r:\\
                 B(x,r) \cap S_{\omega_1,i_1} \circ \cdots \circ S_{\omega_l,i_l}(\overline{\mathcal{O}}) \neq \emptyset
                }} \text{Lip}(S_{\omega_1, i_1})^{s_{\omega_1}} \cdots \text{Lip}( S_{\omega_l, i_l})^{s_{\omega_l}} \\ \\
&\leq& r^{-s_{\min}} \ \Big(\text{Lip}(S_{\omega_1, i_1}) \cdots \text{Lip}( S_{\omega_l, i_l})\Big)^{s_{\min}} \ (1+2a_2)^n(c \,a_1)^{-n} \\ \\
&\leq& (1+2a_2)^n(c \,a_1)^{-n} \\ \\
&<& \infty
\end{eqnarray*}

and by Proposition \ref{MDP} (1) it follows that $\mathcal{H}^{s_{\min}}(F_\omega) \geq(1+2a_2)^{-n}(c \,a_1)^{n}>0$ and, in particular,
\[
0 < \inf_{\omega \in \Omega} \,  \mathcal{H}^{s_{\min}}(F_\omega)  < \infty
\]
which completes the proof. \hfill \qed

\subsection{Proof of Theorem \ref{SS2} (1)} \label{SSproof2}

Write $\mathcal{H}_{\min} =\inf_{\omega \in \Omega} \,  \mathcal{H}^{s_{\min}}(F_\omega)$ and $\mathcal{P}_{\max} =  \sup_{\omega \in \Omega} \,  \mathcal{P}^{s_{\max}}(F_\omega) $ and let $s=s_{\min} = s_{\max}$.
\\ \\
\textbf{Hausdorff measure}
\\ \\
We will show that the set
\[
H = \{ \omega \in \Omega : \mathcal{H}^s( F_\omega) =\mathcal{H}_{\min}\}
\]
is residual.  Writing $H_{m,n} = \{ \omega \in \Omega : \mathcal{H}^{s}_{1/m} (F_\omega) < \mathcal{H}_{\min} + \tfrac{1}{n}\}$, we have
\begin{eqnarray*}
H= \bigcap_{m, n \in \mathbb{N}} H_{m,n},
\end{eqnarray*}
so it suffices to prove that each $H_{m,n}$ is open and dense in $(\Omega, d_\Omega)$.  Fix $m, n \in \mathbb{N}$.  It can be shown that $H_{m,n}$ is open using a similar approach to that used in the proof of Theorem \ref{main} (1).  We will now prove that $H_{m,n}$ is dense.
\\ \\
Let  $\omega=(\omega_1, \omega_2, \dots) \in \Omega$ and $\varepsilon>0$.  Choose $k \in \mathbb{N}$ such that $2^{-k }<\varepsilon$ and choose $u = (u_1,u_2, \dots) \in \Omega$ such that 
\[
\mathcal{H}^s( F_u)  < \mathcal{H}_{\min} + \tfrac{1}{n}.
\]
Let $v =(\omega_1,\dots, \omega_k, u_1,u_2, \dots)$.  It follows that $d_\Omega(\omega, v) < \varepsilon$ and, furthermore,
\begin{eqnarray*}
\mathcal{H}^s_{1/m}(F_v) \  \leq \ \mathcal{H}^s(F_v) &=&  \mathcal{H}^s \Bigg(\bigcup_{j_1\in \mathcal{I}_{\omega_1}, \dots, j_k \in\mathcal{I}_{\omega_k}} S_{\omega_1, j_1} \circ \dots \circ S_{\omega_k,j_k} \big(F_{u} \big)\Bigg) \\ \\
&\leq& \sum_{j_1\in \mathcal{I}_{\omega_1}, \dots, j_k \in\mathcal{I}_{\omega_k}} \text{Lip}(S_{\omega_1, j_1})^s \cdots  \text{Lip}(S_{\omega_k,j_k})^s \ \mathcal{H}^s\big(F_{u} \big)\\ \\
&<& \Big(\mathcal{H}_{\min} + \tfrac{1}{n}\Big) \  \sum_{j_1\in \mathcal{I}_{\omega_1}, \dots, j_k \in\mathcal{I}_{\omega_k}} \text{Lip}(S_{\omega_1, j_1})^s \cdots  \text{Lip}(S_{\omega_k,j_k})^s\\ \\
&=& \mathcal{H}_{\min} + \tfrac{1}{n}
\end{eqnarray*}
where the final equality is due to the fact that $s$ is a solution to Hutchison's formula for each deterministic IFS.  It follows that $v \in H_{m,n}$, proving that $H_{m,n}$ is dense.
\\ \\
\textbf{Packing measure}
\\ \\
We will show that the set $P = \{ \omega \in \Omega : \mathcal{P}^s( F_\omega)  = \mathcal{P}_{\max} \}$ is residual.  It was proved in \cite{packingmeasure} that if a compact set has finite packing pre-measure, then the packing measure and packing pre-measure coincide.  Writing $P_{m,n} = \{ \omega \in \Omega : \mathcal{P}^{s}_{0, \, 1/m} (F_\omega) > \mathcal{P}_{\max}-\tfrac{1}{n}\}$, it follows that
\[
P \supseteq \{ \omega \in \Omega : \mathcal{P}^s_0( F_\omega)  = \mathcal{P}_{\max} \}   = \bigcap_{m, n \in \mathbb{N}} P_{m,n},
\]
so it suffices to prove that each $P_{m,n}$ is open and dense.  Fix $m, n \in \mathbb{N}$.  It can be shown using a similar approach to that used in the proof of Theorem \ref{main} (2) that $P_{m,n}$ is open.  We will now show that it is also dense.
\\ \\
Let  $\omega=(\omega_1, \omega_2, \dots) \in \Omega$ and $\varepsilon>0$.  Choose $k \in \mathbb{N}$ such that $2^{-k }<\varepsilon$ and choose $u = (u_1,u_2, \dots) \in \Omega$ such that $\mathcal{P}^s( F_u)> \mathcal{P}_{\max}-\tfrac{1}{n}$.  Let $v =(\omega_1,\dots, \omega_k, u_1,u_2, \dots)$.  It follows that $d_\Omega(\omega, v) < \varepsilon$ and, furthermore,
\begin{eqnarray*}
\mathcal{P}^s_{0,1/m}(F_v) \  \geq \ \mathcal{P}^s(F_v) &=&  \mathcal{P}^s \Bigg(\bigcup_{j_1\in \mathcal{I}_{\omega_1}, \dots, j_k \in\mathcal{I}_{\omega_k}} S_{\omega_1, j_1} \circ \dots \circ S_{\omega_k,j_k} \big(F_{u} \big)\Bigg) \\ \\
&=& \sum_{j_1\in \mathcal{I}_{\omega_1}, \dots, j_k \in\mathcal{I}_{\omega_k}} \text{Lip}(S_{\omega_1, j_1})^s \cdots  \text{Lip}(S_{\omega_k,j_k})^s \ \mathcal{P}^s\big(F_{u} \big)\\ \\
&>& \Big(\mathcal{P}_{\max} - \tfrac{1}{n} \Big) \ \sum_{j_1\in \mathcal{I}_{\omega_1}, \dots, j_k \in\mathcal{I}_{\omega_k}} \text{Lip}(S_{\omega_1, j_1})^s \cdots  \text{Lip}(S_{\omega_k,j_k})^s \\ \\
&=& \mathcal{P}_{\max} - \tfrac{1}{n}
\end{eqnarray*}
where the final equality is due to the fact that $s$ is a solution to Hutchison's formula for each deterministic IFS.  It follows that $u \in P_{m,n}$, proving that $P_{m,n}$ is dense. \hfill \qed

\section{Examples} \label{examples}

In this section we provide a number of examples designed to illustrate some of the key points made in Section \ref{results}.  The examples in Sections \ref{conjfalse} and \ref{affineexample} will be random Sierpi\'nski carpets, as discussed in Section \ref{probapp}.

\subsection{Typical Hausdorff and packing measure} \label{conjfalse}

In this section we give two simple examples which show that the Hausdorff measure can typically be positive and finite even if the supremal Hausdorff measure is infinite and the packing measure can typically be positive and finite even if the infimal packing measure is zero.  The existence of these examples is slightly surprising in view of Theorems \ref{infinite} and \ref{zero} and the behaviour observed in the self-similar setting, see Theorem \ref{SS2}.
\\ \\
\textbf{Hausdorff measure}
\\ \\
Let $\mathbb{I} = \{\mathbb{I}_1, \mathbb{I}_2\}$ be a RIFS where $\mathbb{I}_1$ and $\mathbb{I}_2$ are IFSs of orientation preserving affine self-maps on $[0,1]^2$ corresponding to the figure below. 

\begin{figure}[H]
	\centering
	\includegraphics[width=80mm]{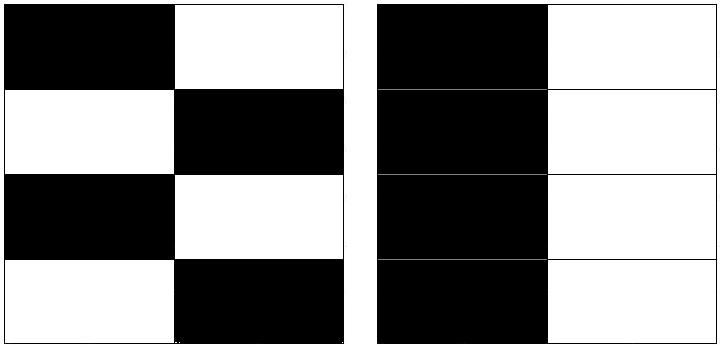}
\caption{The defining pattern for a random Sierpi\'nski carpet with $N=2,\  m_1=m_2=2$ and $n_1=n_2=4$.}
\end{figure}

It is clear that $\inf_{\omega \in \Omega} \dim_\H F_\omega = 1$ and $\inf_{\omega \in \Omega} \mathcal{H}^1( F_\omega) = 1< \infty =\sup_{\omega \in \Omega} \mathcal{H}^1( F_\omega) $.  It follows from Theorem \ref{main} that the typical Hausdorff dimension is 1.  We will now show that the typical Hausdorff measure is also infimal and, in particular, positive and finite.  We will show that the set $H = \{ \omega \in \Omega : \mathcal{H}^1( F_\omega) =1\}$ is a dense $G_\delta$ set and thus residual.  It can be shown that $H$ is $G_\delta$ using a very similar approach to that used in the proof of Theorem \ref{main} (1).  It remains to show that $H$ is dense.
\\ \\
Let  $\omega=(\omega_1, \omega_2, \dots) \in \Omega$ and $\varepsilon>0$.  Choose $k \in \mathbb{N}$ such that $2^{-k }<\varepsilon$ and let $v =(\omega_1,\dots, \omega_k,2,2, \dots)$.  It follows that $d_\Omega(\omega, v) < \varepsilon$ and, furthermore, since $F_{(2,2,\dots)} = \{0\}\times[0,1]$, we have
\[
F_v =\bigcup_{j_1\in \mathcal{I}_{\omega_1}, \dots, j_k \in\mathcal{I}_{\omega_k}} S_{\omega_1, j_1} \circ \dots \circ S_{\omega_k,j_k}\big(\{0\}\times[0,1] \big)
\]
and, since the vertical component of every map in $\mathbb{I}$ is a similarity with contraction ratio $1/4$ and both deterministic IFSs consist of 4 maps, we have
\[
\mathcal{H}^1(F_v) \leq \sum_{j_1\in \mathcal{I}_{\omega_1}, \dots, j_k \in\mathcal{I}_{\omega_k}} \mathcal{H}^1 \Bigg( S_{\omega_1, j_1} \circ \dots \circ S_{\omega_k,j_k} \big(\{0\}\times[0,1] \big)\Bigg)
=4^k \ 4^{-k} \ \mathcal{H}^1 \big(\{0\}\times[0,1] \big)
=  1
\]
and so $v \in H$, proving that $H$ is dense. 
\\ \\
\textbf{Packing measure}
\\ \\
Let $\mathbb{I} = \{\mathbb{I}_1, \mathbb{I}_2\}$ be a RIFS where $\mathbb{I}_1$ and $\mathbb{I}_2$ are IFSs of orientation preserving affine self-maps on $[0,1]^2$ corresponding to the figure below.

\begin{figure}[H]
	\centering
	\includegraphics[width=80mm]{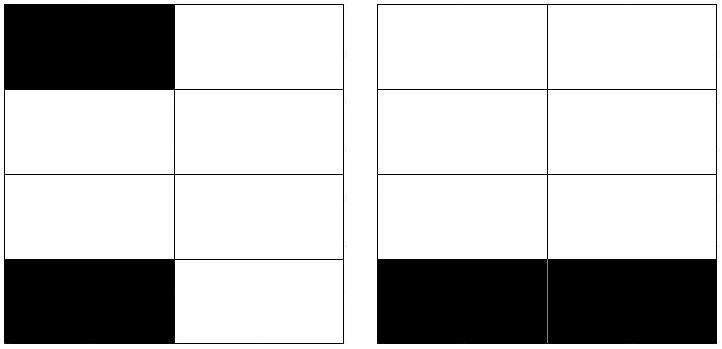}
\caption{The defining pattern for a random Sierpi\'nski carpet with $N=2, \ m_1=m_2=2$ and $n_1=n_2=4$.}
\end{figure}

We claim that $\inf_{\omega \in \Omega} \mathcal{P}^1( F_\omega) = 0 <1 \leq \sup_{\omega \in \Omega} \mathcal{P}^1( F_\omega) \leq 4$ and it follows that $\sup_{\omega \in \Omega} \dim_\P F_\omega = 1$.  The only inequality which is not obvious is $\sup_{\omega \in \Omega} \mathcal{P}^1( F_\omega) \leq 4$ which we will now prove.  Fix $\omega \in \Omega$ and define a mass distribution, $\mu_\omega$, on $F_\omega$ by assigning each level $k$ rectangle mass $2^{-k}$ in a similar way to the construction of the measures in Section \ref{SSproof}. It is easy to see that for all $x \in F_\omega$ we have $\liminf_{r \to 0} \mu(B(x,r)) \, r^{-1} \geq 1/2$ and it follows from Proposition \ref{MDP} (2) that $\mathcal{P}^1( F_\omega) \leq 4$.  Theorem \ref{main} gives that the typical packing dimension is 1.  We will now show that the typical packing measure is greater than or equal to 1 and, in particular, positive and finite.  We will show that the set $P = \{ \omega \in \Omega : \mathcal{P}^1( F_\omega)  \geq 1 \}$ is a dense $G_\delta$ set and thus residual.  It follows from the result in \cite{packingmeasure} and Lemma \ref{usepre} that $P = \{ \omega \in \Omega : \mathcal{P}^1_0( F_\omega)  \geq 1 \}$ and it can thus be shown that $P$ is $G_\delta$ using a very similar approach to that used in the proof of Theorem \ref{main} (2).  It remains to show that $P$ is dense.
\\ \\
Let  $\omega=(\omega_1, \omega_2, \dots) \in \Omega$ and $\varepsilon>0$.  Choose $k \in \mathbb{N}$ such that $2^{-k }<\varepsilon$ and let $v =(\omega_1,\dots, \omega_k, 2,2, \dots)$.  It follows that $d_\Omega(\omega, v) < \varepsilon$ and, furthermore, since $F_{(2,2,\dots)} = [0,1]\times\{0\}$, we have
\[
F_v =\bigcup_{j_1\in \mathcal{I}_{\omega_1}, \dots, j_k \in\mathcal{I}_{\omega_k}} S_{\omega_1, j_1} \circ \dots \circ S_{\omega_k,j_k}\big([0,1]\times\{0\}\big)
\]
and, since the horizontal component of every map in $\mathbb{I}$ is a similarity with contraction ratio $1/2$ and both deterministic IFSs consist of 2 maps, we have
\[
\mathcal{P}^1_{0}(F_v) \ = \  \mathcal{P}^1(F_v) = \sum_{j_1\in \mathcal{I}_{\omega_1}, \dots, j_k \in\mathcal{I}_{\omega_k}} \mathcal{P}^1  \bigg(S_{\omega_1, j_1} \circ \dots \circ S_{\omega_k,j_k} \big([0,1]\times\{0\}\big)\bigg) = 2^k \ 2^{-k} \ \mathcal{P}^1\big([0,1]\times\{0\} \big) =  1
\]
and so $u \in P$, proving that $P$ is dense.

\begin{rem} 
We believe that a more delicate application of the mass distribution principle will yield that, in fact, $\sup_{\omega \in \Omega} \mathcal{P}^1( F_\omega) =1$, but since the important thing for our purposes is that the typical value is positive and finite, we omit further calculation.
\end{rem}

\subsection{Dimension outside range} \label{affineexample}

In this section we give a simple example which shows that in the non-conformal setting the dimension of the random attractor need not be bounded below by the minimum dimension of the deterministic attractors.  This is in stark contrast to Theorem \ref{SS}, concerning random self-similar sets.  Furthermore, $\inf_{u \in \Omega} \dim_\H F_u$ is not attained by any finite combination of the determinsitic IFSs.  Let $\mathbb{I} = \{\mathbb{I}_1, \mathbb{I}_2\}$ be a RIFS where $\mathbb{I}_1$ and $\mathbb{I}_2$ are IFSs of orientation preserving affine self-maps on $[0,1]^2$ corresponding to the figure below.

\begin{figure}[H]
	\centering
	\includegraphics[width=80mm]{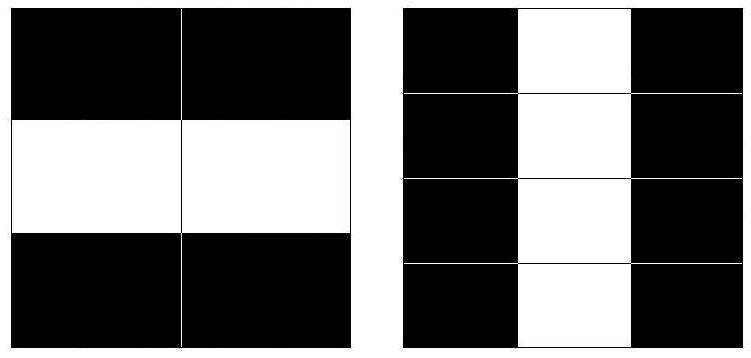}
\caption{The defining pattern for a random Sierpi\'nski carpet with $N=2,\,  m_1=2,\,  n_1=3,\,  m_2=3$ and $n_2=4$.}
\end{figure}

The results of \cite{bedford, mcmullen} give that for both deterministic attractors the Hausdorff, box and packing dimensions are all equal to $1+ \log 2 / \log 3 \approx 1.63$.  For $p \in [0,1]$, associate a probability vector $(p,\,1-p)$ with this system.  By the result of \cite{me_random}, given here as Theorem \ref{melars}, the almost sure Hausdorff dimension of $F_\omega$ is given by
\begin{eqnarray*}
\dim_\H F_\omega &=& \frac{p}{\log 2^p  3^{1-p}} \log\bigg( 2^{\log 2^p  3^{1-p} / \log 3^p  4^{1-p}}+2^{\log 2^p  3^{1-p} / \log 3^p  4^{1-p}} \bigg)\\ \\ 
&\,& \qquad+\frac{1-p}{\log 2^p  3^{1-p}} \log\bigg( 4^{\log 2^p  3^{1-p} / \log 3^p  4^{1-p}}+4^{\log 2^p  3^{1-p} / \log 3^p  4^{1-p}} \bigg)\\ \\
&=& \frac{\log 2}{\log 2^p  3^{1-p}} + (2-p) \, \frac{\log 2}{\log 3^p  4^{1-p}}.
\end{eqnarray*}
In fact, since each deterministic IFS has \emph{uniform vertical fibres} it follows from results in \cite{random_carpets2} that the above formula also gives the almost sure box and packing dimensions of $F_\omega$.  Plotting this as a function of $p$, we obtain
\begin{figure}[H]
	\centering
	\includegraphics[width=60mm]{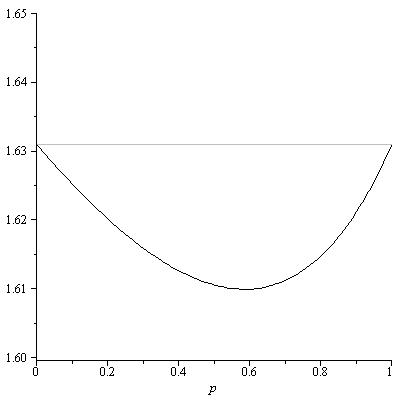}
		\caption{A graph of the almost sure Hausdorff dimension as a function of $p$.  The grey line shows the dimension of the deterministic attractors.}
\end{figure}

Notice the nonlinear dependence on $p$ and the fact that for $p \in (0,1)$ the almost sure dimension is lower than the minimum dimension of the two deterministic attractors.  In particular, the dimension of $F_\omega$ is not bounded below by the minimum Hausdorff dimension of the deterministic attractors, despite the fact that the UOSC is satisfied.  As such it is not at all clear what the infimal (and thus typical) Hausdorff dimension is.  This is in stark contrast to the self-similar setting, see Theorem \ref{SS} (4).  It is natural to ask if the infimal dimension is attained by an attractor of a deterministic IFS given by a finite combination of the original deterministic IFSs, $\mathbb{I}_1$ and $\mathbb{I}_2$.  We will argue now that it is not.  Finite combinations of $\mathbb{I}_1, \mathbb{I}_2$ give deterministic IFSs with attractors equal to $F_\omega$ for some `rational' $\omega \in \Omega$, i.e., some $\omega$ which consists of a finite word over $D$ repeated infinitely often.  Fix such a finite combination and let $N_1$ be the number of times we have used $\mathbb{I}_1$ and let $N_2$ be the number of times we have used $\mathbb{I}_2$.  It is clear, and in fact it follows from the results in \cite{random_carpets2}, that the Hausdorff dimension of the attractor is equal to the almost sure Hausdorff dimension of the attractor corresponding to $p = N_1/(N_1+N_2) \in \mathbb{Q}$.  However, elementary optimisation reveals that the minimum almost sure Hausdorff dimension (seen as the minimum of the graph above)
is attained by $p=2-\sqrt{2} \notin \mathbb{Q}$.

\subsection{Typical measure not positive and finite} \label{notdense}

In this section we will give a straightforward example which has the interesting property that, although the Hausdorff and packing measures of the attractors of the deterministic IFSs in the appropriate dimension are positive and finite, the typical Hausdorff and packing measures are infinity and zero, respectively.
\\ \\
Let $S_1, S_2, S_3: [0,1] \to [0,1]$ be defined by
\[
S_1(x) = x/3, \qquad S_2(x) = x/3+1/3, \qquad \text{and} \qquad S_3(x) = x/3+2/3.
\]
Let $\mathbb{I}$ be the RIFS consisting of the two deterministic IFSs, $\{S_1,S_3\}$ and $\{S_1, S_2,S_3\}$.  The attractors for these systems are the middle $1/3$ Cantor set, $C_{1/3}$, and the unit interval, $[0,1]$, respectively.  Also, since the first IFS is contained in the second, for all $\omega \in \Omega$,
\[
C_{1/3} \subseteq F_\omega \subseteq [0,1]
\]
from which it follows that dimensions are bounded between $s=\tfrac{\log 2}{\log 3}$ and 1 and that
\[
\inf_{u \in \Omega} \,\mathcal{H}^s( F_u) = \mathcal{H}^s(C_{1/3}) = 1
\]
and
\[
\sup_{u \in \Omega} \,\mathcal{P}^1( F_u) = \mathcal{P}^1([0,1]) = 1.
\]
It follows from Theorem \ref{SS2}  that, for a typical $\omega \in \Omega$, the set $F_\omega$ has Hausdorff and lower box dimension equal to $\tfrac{\log 2}{\log 3}$ and packing and upper box dimension equal to 1 but $\tfrac{\log 2}{\log 3}$-dimensional Hausdorff measure equal to $\infty$ and 1-dimensional packing measure equal to $0$.  It is clear that the $\mathcal{P}^{\log 2/ \log3}$-MSC is satisfied.

\subsection{A nonlinear example: random cookie cutters}

Although the previous examples illustrate some of the key phenomenon we wish to discuss, they have all been based on RIFSs consisting of translate linear (affine) maps.  Of course, Theorems \ref{main}, \ref{infinite} and \ref{zero} apply in far more general circumstances than this.  In this section we construct a more complicated example using nonlinear maps to which we can apply Theorems \ref{infinite} and \ref{zero} to deduce that neither the typical Hausdorff nor packing measures are positive and finite in the appropriate dimensions.
\\ \\
Let $f_1, f_2:[0,1] \to \mathbb{R}$ be defined by
\[
f_1(x) = -5x(x-1) \qquad \text{ and } \qquad f_2(x) = 9(x-1/6)(x-5/6)
\]
respectively.
\begin{figure}[H]
	\centering
	\includegraphics[width=50mm]{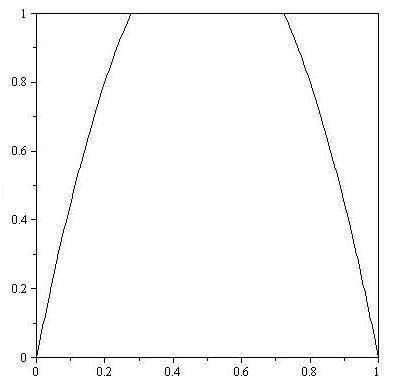}
	\qquad
	\includegraphics[width=50mm]{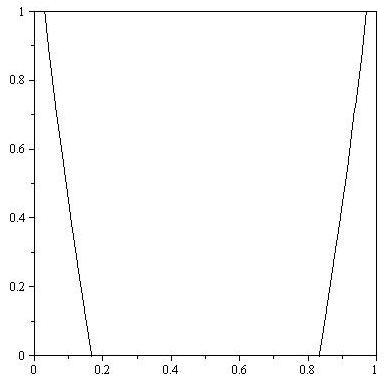}
	\caption{Graphs of the maps $f_1$ (left) and $f_2$ (right) restricted to the unit square.}
\end{figure}
Observe that $f$ maps each of the intervals $X_{1,1} = \big[0,\, \tfrac{1}{2}-\tfrac{1}{10}\sqrt{5}\big]$ and $X_{1,2} = \big[\tfrac{1}{2}+\tfrac{1}{10}\sqrt{5}, \, 1 \big]$ bijectively onto $[0,1]$ and furthermore $f_1'$ is continuous and
\begin{equation} \label{exp1}
2\leq \lvert f_1'(x) \rvert \leq  5
\end{equation}
for $x \in X_{1,1} \cup X_{1,2}$.  Similarly, $f_2$ maps each of the intervals $X_{2,1} = \big[\tfrac{1}{2}-\tfrac{1}{3}\sqrt{2}, \, \tfrac{1}{6} \big]$ and $X_{2,2} = \big[\tfrac{5}{6}, \, \tfrac{1}{2}+\tfrac{1}{3}\sqrt{2},\big]$ bijectively onto $[0,1]$, $f_2'$ is continuous and
\begin{equation} \label{exp2}
6 \leq \lvert f_2'(x) \rvert \leq 9
\end{equation}
for $x \in X_{2,1}\cup X_{2,2}$.  We have constructed two expanding dynamical systems $(X_{1,1} \cup X_{1,2}, f_1)$ and $(X_{2,1}\cup X_{2,2}, f_2)$ with repellers given by
\[
F_1 = \bigcap_{k \geq 0} f_1^{-k}\big([0,1]\big)  \qquad \text{ and } \qquad 
F_2 = \bigcap_{k \geq 0} f_2^{-k}\big([0,1]\big)
\]
respectively.  Repellers of this type are often called \emph{cookie cutters} and the Hausdorff dimension can be computed via the thermodynamical formalism.  For a more detailed account of cookie cutters and the thermodynamical formalism, the reader is referred to \cite{techniques}, Chapters 4--5.  We can view $F_1$ and $F_2$ as attractors of deterministic IFSs consisting of the inverse branches of $f_1$ and $f_2$.  In particular, the inverse branches of $f_1$ are given by 
\[
S_{1,1}(x) = \tfrac{1}{2}-\tfrac{1}{2}\sqrt{1-\tfrac{4}{5}x}  \,\Big)  \qquad \text{ and } \qquad S_{1,2}(x) = \tfrac{1}{2}+\tfrac{1}{2}\sqrt{1-\tfrac{4}{5}x}  \,\Big)
\]
and the inverse branches of $f_2$ are given by
\[
S_{2,1}(x) = \tfrac{1}{2}-\tfrac{1}{3}\sqrt{1+x}    \qquad \text{ and } \qquad S_{2,2}(x) = \tfrac{1}{2}+\tfrac{1}{3}\sqrt{1+x}.
\]
Let $\mathbb{I}$ be the RIFS consisting of $\mathbb{I}_1 = \{S_{1,1}, S_{1,2}\}$ and $\mathbb{I}_1 = \{S_{2,1}, S_{2,2}\}$.  Here $F_1$ corresponds to the choice $(1,1, \dots) \in \Omega$ and $F_2$ corresponds to the choice $(2,2, \dots) \in \Omega$.  For an arbitrary $\omega = (\omega_1,\omega_2, \dots) \in \Omega$, we obtain a \emph{random cookie cutter}
\[
F_\omega = \bigcap_{k \geq 0} f_{\omega_1}^{-1} \circ \cdots \circ f_{\omega_k}^{-1} \big([0,1]\big).
\]
Write $h= \inf_{u \in \Omega} \, \dim_\H F_u$ and $p= \sup_{u \in \Omega} \, \dim_\P F_u$.  It follows from (\ref{exp1}-\ref{exp2}), the fact that $f_1', f_2'$ are continuous and the mean value theorem that, for $i=1,2$,
\[
1/5 \leq \text{Lip}^-(S_{1,i})  \leq \text{Lip}^+(S_{1,i})   \leq 1/2 \qquad \text{ and } \qquad 1/9  \leq \text{Lip}^-(S_{2,i}) \leq \text{Lip}^+(S_{2,i})   \leq 1/6
\]
and applying standard estimates for the dimension gives
\[
h \leq \dim_\H F_2 \leq \frac{\log 2}{\log 6} < \frac{\log 2}{\log 5} \leq \dim_\P F_1 \leq p,
\]
see \cite{falconer}, Propositions 9.6--9.7.  Furthermore,
\[
\sum_{i_1\in \mathcal{I}_{1}, \dots, i_k \in\mathcal{I}_{1}} \text{Lip}^{-}(S_{1, i_1} \circ \dots \circ S_{1,i_k})^h \geq \big( 2 \cdot  5^{-h}\big) ^k  \to \infty
\]
and
\[
\sum_{j_1\in \mathcal{I}_{2}, \dots, j_k \in\mathcal{I}_{2}} \text{Lip}^{+}(S_{2, j_1} \circ \dots \circ S_{2,j_k})^p \leq \big( 2 \cdot 6^{-p} \big) ^k \to 0
\]
as $k \to \infty$.  It follows from Theorem \ref{main}, \ref{infinite} and \ref{zero} that, for a typical $\omega \in \Omega$, $\dim_\H F_\omega = h < p = \dim_\P F_\omega$ but 
\[
\mathcal{H}^h(F_\omega) = \left\{ \begin{array}{cc}
0 &  \text{if $\inf_{u \in \Omega} \,\mathcal{H}^h( F_u) = 0$}\\ \\
\infty  &  \text{if $\inf_{u \in \Omega} \,\mathcal{H}^h( F_u) >0$}
\end{array} \right. 
\]
and
\[
\mathcal{P}^p(F_\omega) = \left\{ \begin{array}{cc}
0 &  \text{if $\sup_{u \in \Omega} \,\mathcal{P}^p( F_u) < \infty$}\\ \\
\infty  &  \text{if $\sup_{u \in \Omega} \,\mathcal{P}^p( F_u) = \infty$}
\end{array} \right. 
\]
In particular, for a typical $\omega \in \Omega$, the random cookie cutter $F_\omega$ is `dimensionless' in the sense that neither the $s$-dimensional Hausdorff measure nor the $s$-dimensional packing measure are positive and finite for any $s \geq 0$.

\subsection{Some pictorial examples} \label{pics}

In this section we give some pictorial examples of attractors of RIFSs to illustrate some of the rich and complicated structures we can expect to see.  Although our results apply in both examples we do 
not perform any calculations.
\\ \\
Let $S_1, S_2, S_3:\mathbb{R}^2\to \mathbb{R}^2$ be defined by $S_1(x,y) = (x/2, y^2/2)$, $S_2(x,y) = (x/2+1/2, y/2)$ and $S_3(x,y)=(x^2/2, y/2+1/2)$ and let $T_1, T_2:\mathbb{R}^2\to \mathbb{R}^2$ be defined by $T_1(x,y) = (x^2/2, y/2)$ and $T_2(x,y) = (x/2+1/2, y/2+1/2)$.  Finally, let $\mathbb{I}$ be the RIFS consisting of $\mathbb{I}_1 =\{S_1, S_2, S_3\}$ and $\mathbb{I}_2 = \{T_1, T_2\}$.

\begin{figure}[H]
	\centering
	\includegraphics[width=50mm]{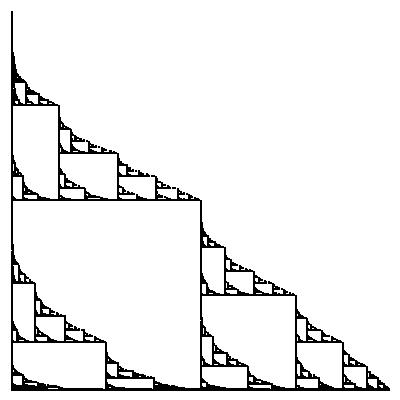}
	\qquad
	\includegraphics[width=50mm]{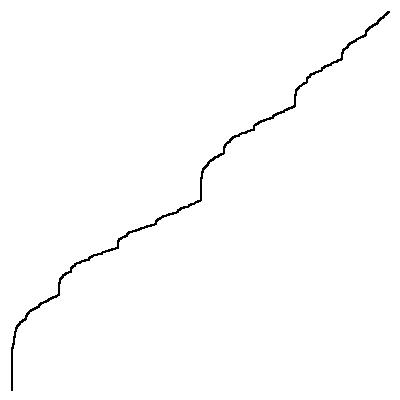} \\ \vspace{5mm}
	\includegraphics[width=50mm]{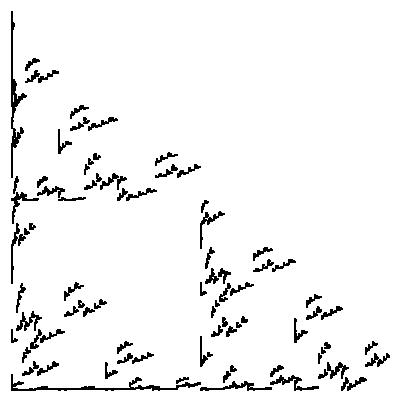}
	\qquad
	\includegraphics[width=50mm]{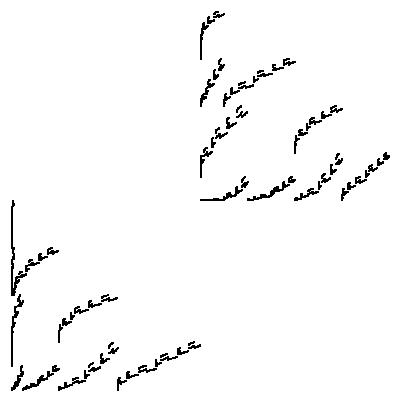}
	\caption{The attractors of $\mathbb{I}_1$ (top left) and $\mathbb{I}_2$ (top right) along with two random attractors of $\mathbb{I}$ corresponding to $\omega = (1,1,1,2,1,2, \dots)$ (bottom left) and  $\omega = (2,1,1,2,2,2,\dots)$ (bottom right).}
\end{figure}

Let $U_1, U_2, U_3:\mathbb{R}^2\to \mathbb{R}^2$ be defined by $U_1(x,y) = (x/3+y/6, y/3)$, $U_2(x,y) = (x/3-y/6+1/6, y/3+2/3)$ and $U_3(x,y)=(x/2+1/2, y/3+1/3)$ and let $V_1, V_2,V_3:\mathbb{R}^2\to \mathbb{R}^2$ be defined by $V_1(x,y) = (x/3, x(1-x)/2+y/2)$, $V_2(x,y) = (-x/3+1, x(1-x)/2+y/2)$ and $V_3(x,y) = (x/3+1/3, x(1-x)/2+y/2+1/2)$.  Finally, let $\mathbb{I}$ be the RIFS consisting of $\mathbb{I}_1 =\{U_1, U_2, U_3\}$ and $\mathbb{I}_2 = \{V_1, V_2\}$.  The attractor of $\mathbb{I}_1$ is a self-affine set.

\begin{figure}[H]
	\centering
	\includegraphics[width=50mm]{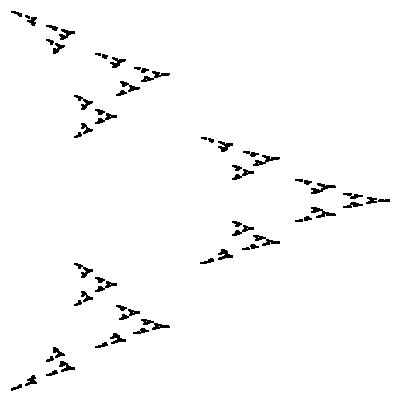}
	\qquad
	\includegraphics[width=50mm]{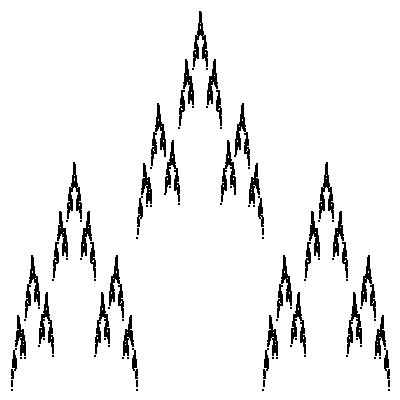} \\ \vspace{5mm}
	\includegraphics[width=50mm]{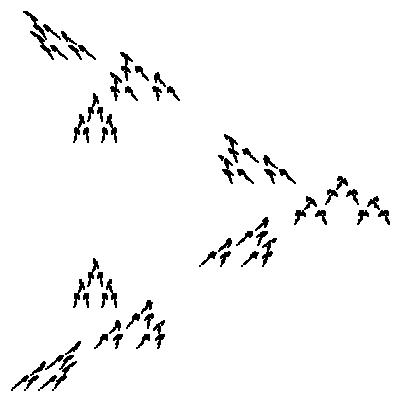}
	\qquad
	\includegraphics[width=50mm]{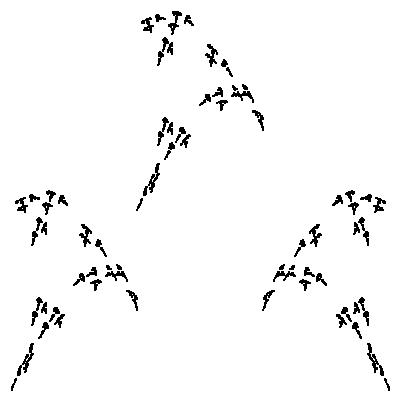}
	\caption{The attractors of $\mathbb{I}_1$ (top left) and $\mathbb{I}_2$ (top right) along with two random attractors of $\mathbb{I}$ corresponding to $\omega = (1,1,2,2,1,2,\dots)$ (bottom left) and  $\omega = (2,1,1,2,1,1,\dots)$ (bottom right).}
\end{figure}

\section{Discussion} \label{discuss}

In this section we collect together and discuss some of the questions raised by the results in this paper.
\\ \\
(1) \emph{Is the typical measure always extremal?}  We have shown that the typical dimensions behave rather well in that the typical Hausdorff and lower box dimensions always infimal and the typical Hausdorff and lower box dimensions always supremal.  The typical Hausdorff and packing measures behave rather worse and our examples show that they can both be either infimal or supremal.  However, we have not proved that they are \emph{always} extremal.
\\ \\
(2) \emph{Computing the extremal dimensions}.  Theorem \ref{main} tells us that the typical dimensions are extremal in very general circumstances.  However, it gives no indication of how one might compute the extremal dimensions.  This may be a very difficult problem and the example in Section \ref{affineexample} sheds some light on that difficulty.  Given a RIFS, can we say anything non-trivial about the extremal dimensions in general?  Theorem \ref{SS} tells us how to compute the extremal dimensions in the self-similar setting, assuming the UOSC.
\\ \\
(3) \emph{The bi-Lipschitz requirement.}  Throughout this paper we assume that all of our maps are bi-Lipschitz.  It is easily seen, however, that not all of our proofs require this.  In fact, Theorem \ref{main} parts (1), (3) and (5) go through assuming that the maps are simply contractions.  Also, a slightly weaker version of Theorem \ref{zero} can be proved, which states that if there exists $v \in \Omega$ satisfying conditon (\ref{lip+}) and $\sup_{u \in \Omega} \,\mathcal{P}^p( F_u) < \infty$, then for a typical $\omega \in \Omega$, we have $\mathcal{P}^p(F_\omega) = 0$.
\\ \\
(4) \emph{Strengthening of Theorem \ref{SS}.}  In view of the non-conformal example given in Section \ref{examples} it seems that the validity of the bounds given in Theorem \ref{SS} depend on two things: conformality; and separation properties.  It seems likely that one could prove an analogous result using conformal mappings instead of similarities and replacing each $s_i$ with the solution of Bowen's formula corresponding to the IFS, $\mathbb{I}_i$.  What could be a more interesting question is whether or not the UOSC condition is required in the self-similar case.
\\ \\
(5) \emph{Doubling gauges.}  At first sight it is somewhat curious that in Theorem \ref{main} we require that the gauge is doubling for the result concerning packing measure, but can use arbitrary gauges for Hausdorff measure.  In fact, it is not uncommon that doubling gauges play an important r\^ole when studying packing measure, see, for example, \cite{preisspacking, doublingpacking}.
\\ \\
(6) \emph{Dimension outside range}.  The example in Section \ref{affineexample} shows that the dimensions can be strictly less than the minimum of the dimensions of the attractors of the deterministic IFSs.  We have not, however, proved that the dimensions can be bigger than the maximum of the dimensions of the attractors of the deterministic IFSs
\\ \\
(7) \emph{Separation properties in the self-similar case.} In Theorems \ref{SS} and \ref{SS2} we assumed various separation properties.  In fact, some parts of these Theorems go through assuming slightly weaker conditions.  For example, Theorem \ref{SS2} (1) we require only the $\mathcal{H}^{s_{\min}}$-MSC to prove that the typical Hausdorff measure is infimal and positive and finite.  We choose to state these theorems using the stronger separation properties in order to simplify exposition and not shroud the key ideas.
\\ \\
(8) \emph{More randomness.}  It is possible to introduce more randomness into our construction.  In particular, one might relax the requirement that at the $k$th level of the construction we use the same IFS within each $k$th level iterate of $K$.  In this case our sequence space, $\Omega$, would be replaced by a space of infinite rooted trees.  We believe that although this is a significantly more general construction, the topological properties of $\Omega$ would not change significantly and most of our arguments should generalise without too much difficulty.  One might also consider the intermediate levels of randomness given by V-variable fractals introduced in \cite{vvariable} and discussed in detail in \cite{superfractals}.
\\ \\
(8) \emph{Typical versus almost sure.}  An interesting consequence of Theorem \ref{main} is that our topological approach gives drastically different results to the probabilistic (or measure theoretic) approach.  For example, compare Theorem \ref{almostsuress} with our result, Theorem \ref{SS2}.  A similar comparison has cropped up in a wide variety of situations with, roughly speaking, the topological approach favouring divergence and the probabilistic approach favouring converegence.  Indeed, our results on dimension are of this nature.  A similar phenomenon has arisen in, for example: dimensions of measures \cite{haasedimmeasure, olsendimmeasure}; dimensions of graphs of continuous functions \cite{me_hausdorff}; and frequency properties of expansions of real numbers \cite{typicalnormalnum}.  These references are given as a sample of some of the situations where a contrast between topological and probabilistic approaches have been observed and are by no means a complete list.  For example, generic dimensions of measures and graphs of continuous functions have been studied extensively and, for a more complete survey, the reader is referred to \cite{olsendimmeasure} and \cite{me_hausdorff} and the references therein.
\\ \\
(9) \emph{Choice of topological space.} Baire category theory can be used in much more general spaces than just complete metric spaces.  In fact, all one needs is a \emph{Baire topological space}, i.e., a topological space where the intersection of any countable collection of open dense sets is dense.  In Section \ref{topapp} we introduced a topology on $\Omega$ to allow us to examine the size of subsets of $\Omega$ using Baire category.  Of course we could have formulated our analysis in terms of the set $\Lambda = \{F_\omega : \omega \in \Omega\}$ equipped with the topology induced by the Hausdorff metric.  We note here that these two approaches are essentially equivalent.  Define an equivalence relation, $R$, on $\Omega$ by $\omega \,  R  \, u \Leftrightarrow F_\omega=F_u$ and let $q: \Omega \to \Omega / R$ be the quotient map, where  $\Omega / R$ is equipped with the quotient topology.  Let $\Psi: \Omega \to \mathcal{K}(K)$ be defined by $\Psi(\omega) = F_\omega$ and $\hat{\Psi}: \Omega/R \to \mathcal{K}(K)$ be defined by $\hat{\Psi}([\omega]) = F_\omega$ and observe that $\Psi$ is continuous by Lemma \ref{continuity} and that $\hat{\Psi}$ is clearly well-defined.  The following diagram commutes
\[
\begin{xy}
(0,30)*+{\Omega}="a"; (30,30)*+{\Omega/R}="b";
(30,0)*+{\Lambda}="d"; 
{\ar@{->>} "a";"b"}?*!/_2mm/{q};
{\\ \ar@{>->>} "b";"d"};?*!/_2mm/{\hat{\Psi}};
{\ar@{->>} "a";"d"};?*!/_2mm/{\Psi};
\end{xy}
\]
and furthermore, $\hat{\Psi}$ is a homeomorphism.  It is easy to see that $\Omega/R$, and hence $\Lambda$, are Baire and that images of residual subsets of $\Omega$ under $q$ are residual in $\Omega/R$.  It follows that all of our results could be phrased as `for a typical set $F_\omega \in \Lambda$...' instead of `for a typical $\omega \in \Omega$...'.
\\

\begin{centering}

\textbf{Acknowledgements}

\end{centering}

The author was supported by an EPSRC Doctoral Training Grant and thanks Kenneth Falconer for some helpful comments on a previous draft of the manuscript.

\end{document}